\date{}
\title{Zero-Hopf Bifurcation of Limit Cycles in \\ Certain Differential Systems}
\author{Bo Huang$^{\text{a},}$\footnote{Corresponding author.} and Dongming Wang$^{\text{b,c}}$\\
	\it\footnotesize $^{\text{a}}$LMIB -- School of Mathematical Sciences,
Beihang University, Beijing 100191, China \\
	\it\footnotesize bohuang0407@buaa.edu.cn\\
	\it\footnotesize $^{\text{b}}$LMIB -- Institute of Artificial Intelligence,
Beihang University, Beijing 100191, China\\
    \it\footnotesize $^{\text{c}}$Centre National de la Recherche Scientifique,
75794 Paris Cedex 16, France\\
	\it\footnotesize Dongming.Wang@lip6.fr}
\newtheorem {theorem*}{Theorem}
\newtheorem{theorem} {Theorem}
\newtheorem{definition}{Definition}
\newtheorem{proposition}{Proposition}
\newtheorem{lemma}{Lemma}
\newtheorem{corollary}{Corollary}
\newtheorem{remark}{Remark}
\newtheorem{conjecture} {Conjecture}
\newtheorem{open problem} {Open problem}
\numberwithin{equation}{section}
\begin{document}
\maketitle
\noindent {\bf Abstract.}  This paper studies the number of limit cycles that may bifurcate from an equilibrium of an autonomous system of differential equations. The system in question is assumed to be of dimension $n$, have a zero-Hopf equilibrium at the origin, and consist only of homogeneous terms of order $m$. Denote by $H_k(n,m)$ the maximum number of limit cycles of the system that can be detected by using the averaging method of order $k$. We prove that $H_1(n,m)\leq(m-1)\cdot m^{n-2}$ and $H_k(n,m)\leq(km)^{n-1}$ for generic $n\geq3$, $m\geq2$ and $k>1$. The exact numbers of $H_k(n,m)$ or tight bounds on the numbers are determined by computing the mixed volumes of some polynomial systems obtained from the averaged functions. Based on symbolic and algebraic computation, a general and algorithmic approach is proposed to derive sufficient conditions for a given differential system to have a prescribed number of limit cycles. The effectiveness of the proposed approach is illustrated by a family of third-order differential equations and by a four-dimensional hyperchaotic differential system.


\smallskip

\noindent {\bf Math Subject Classification (2020).} 34C07; 37G15; 68W30.

\smallskip

\noindent {\bf Keywords.} {Averaging method, limit cycle, mixed volume, symbolic computation, zero-Hopf bifurcation.}

\section{Introduction} \label{sect1}

In a nonlinear system of ordinary differential equations there may be exhibited a certain kind of qualitative behavior called limit cycles near a singularity of the system. A limit cycle of a differential system is an isolated periodic orbit of the system. The concept of limit cycles was introduced by Poincar\'e \cite{poi1881} in the late 1800s when he observed the behavior of limit cycles and built up the qualitative theory of differential equations. Since then the analysis of limit-cycle bifurcation has been long-standing as a challenging problem under extensive investigation. In particular, determination of the number and relative positions of limit cycles for planar polynomial differential systems is known as the second part of Hilbert's 16th problem \cite{DH1902} that is still open.

In connection with Hilbert's problem the literature is vast (see \cite{Ily2002,li2003} and references therein). The qualitative theory of differential equations established by Poincar\'e \cite{poi1881} and Lyapunov \cite{lya1892} was developed further by Bendixson \cite{Ben1901}, Andronov \cite{Andr1973}, Arnold \cite{Arn1983}, and many others for qualitative analysis of global stability, local stability of equilibria, and bifurcation of limit cycles for various classes of differential systems. In the long history of development there were published a few results on the number of limit cycles. For instance, in a paper of 1955, Petrovsky and Landis \cite{PeLa55} attempted to prove that planar quadratic differential systems can have at most 3 limit cycles. Surprisingly, in 1979, Chen and Wang \cite{ChWa79} and Shi \cite{shi80} independently  found counter-examples to the result of Petrovskii and Landis, showing that planar quadratic differential systems can have 4 limit cycles. These unexpected, inconsistent results have stimulated a lot of interest in the research of limit cycles for quadratic and cubic systems. In particular, Li and others \cite{LLY09} proved that planar cubic differential systems may have at least 13 limit cycles, the best lower bound known so far. From the work of these and many other authors the reader can find a large variety of inspiring results on the number of limit cycles for various classes of differential systems. These results have not only helped enrich the classical qualitative theory of differential equations considerably, but also found important applications in modern science and engineering.

We consider $\mathcal{C}^{m+1}$ differential systems in $\mathbb{R}^n$ with $n\geq3$ and $m\geq2$, and assume that the systems have an isolated singularity at the origin, so that their Taylor expansion can be written in the form
\begin{equation}\label{eq1}
\dot{x}_s=\sum_{j=1}^mP_{s,j}(x_1,\ldots,x_n)+Q_s(x_1,\ldots,x_n),\quad
s=1,\ldots,n,
\end{equation}
where each $P_{s,j}$ is a homogeneous polynomial of order $j$ in $x_1,\ldots,x_n$ and $Q_{s}$ is a Taylor series starting with terms of order $>m$. Let $M$ be the coefficient matrix of the linear terms in $P_{s,1}$. When $M$ has a pair of purely imaginary eigenvalues (e.g., $\pm bi$) and the other eigenvalues are non-zero, we call the origin a Hopf equilibrium point. If some of the eigenvalues of $M$ other than $\pm bi$ are zero, then the origin is called a zero-Hopf equilibrium point. Limit cycles may bifurcate from Hopf equilibria in nonlinear differential systems of the form \eqref{eq1} as the values of the parameters in the coefficients vary. A generic Hopf bifurcation is a bifurcation of a limit cycle from a Hopf equilibrium and a zero-Hopf bifurcation is a bifurcation of a limit cycle from a zero-Hopf equilibrium. Here we are interested in zero-Hopf bifurcations where all the eigenvalues of $M$ different from $\pm bi$ are zero; such kind of zero-Hopf bifurcation is called \textit{complete} zero-Hopf bifurcation. Zero-Hopf bifurcations for three-dimensional systems have been extensively studied (see \cite{LBS07,LMB09,LV11,JA20,YK2004,GH1993} and references therein). Our investigations in this paper are focused on \textit{complete} zero-Hopf bifurcations for differential systems of the form \eqref{eq1} with
$P_{s,2}=\cdots=P_{s,m-1}=0$ for $s=1,\ldots,n$; in this case, system \eqref{eq1} can be written in the form
\begin{equation}\label{equ-1}
\begin{split}
& \dot{x}_1=-bx_2+P_{1,m}(x_1,\ldots,x_n)+Q_1(x_1,\ldots,x_n),\\
& \dot{x}_2=bx_1+P_{2,m}(x_1,\ldots,x_n)+Q_2(x_1,\ldots,x_n),\\
& \dot{x}_s=P_{s,m}(x_1,\ldots,x_n)+Q_s(x_1,\ldots,x_n),\quad s=3,\ldots,n,
\end{split}
\end{equation}
where
\begin{equation}\label{equ-1-0}
P_{s,m}=\sum_{i_1+\cdots+i_n=m}p_{s,i_1,i_2,\ldots,i_n}x_1^{i_1}x_2^{i_2}\cdots x_n^{i_n},\quad s=1,\ldots,n,\nonumber
\end{equation}
$p_{s,i_1,i_2,\ldots,i_n}$ are real parameters, and $b\neq0$.

The main goal of this paper is to determine how many limit cycles can bifurcate from the origin, as a zero-Hopf equilibrium of system \eqref{equ-1}, when the system is perturbed inside the class of differential systems of the same form. We shall use the method of averaging \cite{LNT14}, one of the most powerful and widely used methods for limit-cycle bifurcation in the presence of a small parameter $\varepsilon$. According to the theory of averaging one can replace a vector field by its average (over time or an angular variable) to obtain asymptotic approximations to the original system such that the existence of limit cycles is guaranteed in the approximations. The averaging method permits us to reduce the problem of studying limit cycles of differential systems to that of finding common isolated zeros of the resulting averaged functions. We refer to the books \cite{SVM07,LMS15} for a modern exposition of the averaging theory. It is known that center manifold theory and normal form theory are also powerful tools for the analysis of zero-Hopf bifurcations of nonlinear differential systems (see \cite{GH1993,YK2004,HY2012}). Algorithms have been developed for the computation of center manifolds and normal forms \cite{BiYu99,TiYu14}, but they give no qualitative information about the bifurcated limit cycles. In contrast, using averaged functions, one can determine the shape of the bifurcated limit cycles up to any order in the parameter $\varepsilon$, see \cite{JLXZ09,LMB09}. On the other hand, it has been shown in \cite{BPY20} that the averaging method may be unable to detect possible limit cycles bifurcating from a zero-Hopf equilibrium, while the normal form theory can be used to overcome the difficulty.

To apply the averaging method, one usually considers certain perturbations of the involved parameters in \eqref{equ-1} using a small perturbation variable $\varepsilon$, see \cite{LMB09,DBM21} for instance. Previous results concerning zero-Hopf bifurcations of $\mathcal{C}^{m+1}$ differential systems were obtained mainly for the first order in $\varepsilon$ (see \cite{JLXZ09,PZ2009,BLV18}) and at most up to the third order (see \cite{LBS07,LMB09,LV11,BBM17,DBM21} for studies on polynomial differential systems of the form \eqref{equ-1} of low dimension and low degree: quadratic or cubic). There are also a few results about zero-Hopf bifurcations for high-dimensional differential systems of arbitrary degree. One of them is due to Pi and Zhang \cite{PZ2009} who studied zero-Hopf bifurcations of $\mathcal{C}^{m+1}$ differential systems for general $m$ in $\mathbb{R}^n$ up to the first order in $\varepsilon$. By means of higher-order averaging, some of the results on periodic solutions may be improved qualitatively and quantitatively (see \cite{GGL2013,DBM21}).

The class of perturbed systems studied in this paper is obtained by adding to system \eqref{equ-1} perturbation terms of order up to $k$ in $\varepsilon$, taking the following form
\begin{equation}\label{equ-3}
\begin{split}
\dot{x}_1&=\sum_{j=1}^ka_j\varepsilon^jx_1-\big(b+\sum_{j=1}^kb_j\varepsilon^j\big)x_2
+\sum_{i_1+\cdots+i_n=m}p_{1,i_1,i_2,\ldots,i_n}x_1^{i_1}x_2^{i_2}\cdots x_n^{i_n}+Q_1,\\
\dot{x}_2&=\big(b+\sum_{j=1}^kb_j\varepsilon^j\big)x_1+\sum_{j=1}^ka_j\varepsilon^jx_2
+\sum_{i_1+\cdots+i_n=m}p_{2,i_1,i_2,\ldots,i_n}x_1^{i_1}x_2^{i_2}\cdots x_n^{i_n}+Q_2,\\
\dot{x}_s&=\sum_{j=1}^kc_{s,j}\varepsilon^jx_s+\sum_{i_1+\cdots+i_n=m}p_{s,i_1,i_2,\ldots,i_n}x_1^{i_1}x_2^{i_2}\cdots x_n^{i_n}+Q_s,\quad s=3,\ldots,n,
\end{split}
\end{equation}
where
\begin{equation}\label{equ-3-0}
p_{s,i_1,i_2,\ldots,i_n}=\sum_{j=1}^k\varepsilon^{j-1} p_{s,i_1,i_2,\ldots,i_n,j-1},\quad s=1\ldots,n,\nonumber
\end{equation}
$a_j$, $b_j$, $c_{s,j}$, $p_{s,i_1,i_2,\ldots,i_n,j-1}$
are real parameters, and $\varepsilon$ is the perturbation parameter. The coefficient matrix of the linear part of system \eqref{equ-3} has eigenvalues $(\sum_{j=1}^ka_j\varepsilon^j)\pm i(b+\sum_{j=1}^kb_j\varepsilon^j)$ and $\sum_{j=1}^kc_{s,j}\varepsilon^j$ for $s=3,\ldots,n$. Note that the class of systems can be enlarged by adding more linear perturbation terms to the system \eqref{equ-1} (see Section \ref{sec-hyper}). The interest in the class of systems comes for two main reasons: one is its simplicity as all the eigenvalues of the coefficient matrix of its linear terms can be written out clearly, and the other is that system \eqref{equ-3} has been used to model a large variety of real-world phenomena and as a test bed for studies on the bifurcation problem (see \cite{JLXZ09,BLV18,LBS07,DBM21} for instance).

Let $H_k(n,m)$ denote the maximum number of limit cycles of the differential system in question that can be detected by using the $k$th-order averaging method. This number equals the exact number of real isolated zeros of the averaged functions. It is not necessarily the maximum number of limit cycles that the systems may have. In this paper, we show how to determine the exact number of limit cycles and provide bounds on the number $H_k(n,m)$ of limit cycles which can bifurcate from the origin of system \eqref{equ-3} with $Q_1=\cdots=Q_s=0$ for $|\varepsilon|>0$ sufficiently small. Most of the results obtained can be easily extended to $\mathcal{C}^{m+1}$ differential systems in $\mathbb{R}^n$, but for the sake of simplicity we choose to present the results only for polynomial differential systems. More concretely, the following problem will be addressed.
\begin{description}
\item[Problem {\rm (on the numbers and the conditions for the existence of limit cycles)}]
  \item[A.] Determine the number $H_k(n,m)$ for given integers $n$ and $m$.
  \item[B.] Provide lower and upper bounds of $H_k(n,m)$ for generic $n$ and $m$.
  \item[C.] Derive conditions under which system \eqref{equ-3} has a prescribed number of limit cycles.
\end{description}

The method of averaging has been studied extensively both in theory and for applications. Here we recall some of the known results that are particularly relevant to the problem. Llibre and Zhang proved in \cite{JLXZ09} that up to the first order in $\varepsilon$, $H_1(n,2)=2^{n-3}$ limit cycles can bifurcate from the origin of a perturbed system for which the coefficient matrix of the linear part has eigenvalues $\varepsilon a\pm bi$ and $\varepsilon c_s$ for $s=3,\ldots,n$. They showed for the first time that the number of bifurcated limit cycles can grow exponentially with the dimension $n$, and applied their results to certain fourth-order differential equations as well as a simplified Marchuk model that describes immune response. Later, Pi and Zhang \cite{PZ2009} extended the result of \cite{JLXZ09} for $m=2$ to that for general $m\geq2$, showing that either $H_1(n,m)\geq m^{n-2}/2$ for $m$ even or $H_1(n,m)\geq m^{n-2}$ for $m$ odd. This is the first result showing that the number of limit cycles from a zero-Hopf bifurcation is a power function in the degree of the system. Recently, Barreira and others \cite{BLV18} proved that $H_1(n,3)=3^{n-2}$, $H_1(n,4)\leq6^{n-2}$, and $H_1(n,5)\leq4\cdot5^{n-2}$. In particular, $H_1(3,4)=2$ and $H_1(3,5)=5$. Some detailed analysis for special situations of system \eqref{equ-3} in low dimensions may be found in \cite{LBS07,LMB09,LV11,BBM17}.

The investigations in the present paper are based mainly on the averaging method and some algebraic methods with exact symbolic computation. Our solution to Problem A is a systematical method with interactive algebraic computation, to Problem B is a method based on B\'ezout's bound and Bernstein's bound for counting the number of common isolated zeros of a polynomial system, and to Problem C is an effective algorithmic approach with polynomial-algebra methods to solve semi-algebraic systems. Using our solutions to the problems, we establish two main results on the bounds of the number $H_k(n,m)$ for generic $n$ and $m$: one upper bound $H_1(n,m)\leq(m-1)\cdot m^{n-2}$ up to the first order in $\varepsilon$ (which can be made tighter by Bernstein's theorem, see Theorem \ref{main-th-2}) and the other upper bound $H_k(n,m)\leq(km)^{n-1}$ up to the $k$th order in $\varepsilon$. The former is relatively smaller than the latter when $k=1$, because we can derive explicit expressions of the first-order averaged functions for generic $n$ and $m$ (see Section \ref{sect-main-1}). The forms of higher-order averaged functions can be determined by induction over the averaging order $k$. The induction process is detailed in Section \ref{sec-B}.

The rest of this paper is organized as follows. We explain the basic theory of the averaging method and introduce the concept of mixed volume and Bernstein's theorem in Section \ref{sect2-pre}. The main results (solutions to Problems A, B and C), Theorem \ref{main-th-0} and Corollary \ref{main-co-1} for the bound on the number of limit cycles of system \eqref{equ-3} up to the $k$th-order averaging, Theorems \ref{main-th-1} and \ref{main-th-2} for the bounds on the number of limit cycles of system \eqref{equ-3} up to the first-order averaging, and Theorem \ref{semi-averaging} for a given parametric differential system of the form \eqref{equ-3} to have prescribed numbers of limit cycles are presented in Section \ref{sect3-0}. In Sections \ref{sect-diff} and \ref{sec-hyper}, we demonstrate the effectiveness of our computational approach using results obtained for a class of third-order differential equations and a four-dimensional hyperchaotic differential system. The proofs of Theorems \ref{main-th-0} and \ref{main-th-1} are given in Sections \ref{sec-B} and \ref{sect-main-1} respectively.

\section{Averaging Method for Bifurcation of Limit Cycles and Zero Bounds of Polynomial Systems}\label{sect2-pre}
This section presents some results that will be used to study limit cycles for high-dimensional differential systems. Section \ref{sect2-1} is dedicated to the averaging method for studying periodic solutions of a differential system in $\mathbb{R}^n$ at any order in the small parameter $\varepsilon$. In Section \ref{sec-BBK} we recall, from convex algebraic geometry, the concept of mixed volume and Bernstein's theorem, which we apply to predict the number of solutions of a generic polynomial system.

\subsection{Averaging Method and Its Underlying Theory}\label{sect2-1}
The averaging method introduced here is a slight modification of \cite{LNT14} (and \cite{DDN17} at one point). It deals with differential systems in the following standard form
\begin{equation}\label{equ2-1}
\begin{split}
\frac{d\boldsymbol{x}}{dt}=\sum_{i=0}^k\varepsilon^i\boldsymbol{F}_i(t,\boldsymbol{x})+\varepsilon^{k+1}\boldsymbol{R}(t,\boldsymbol{x},\varepsilon),
\end{split}
\end{equation}
where $\boldsymbol{F}_i:\mathbb{R}\times D\rightarrow\mathbb{R}^n$ for $i=0,1,\ldots,k$ and $\boldsymbol{R}:\mathbb{R}\times D\times(-\varepsilon_0,\varepsilon_0)\rightarrow\mathbb{R}^n$ are continuous functions and $T$-periodic in the variable $t$, with $D$ being an open subset of $\mathbb{R}^n$, and $0<\varepsilon_0\ll1$ a sufficiently small parameter. Let $L$ be a positive integer, $\boldsymbol{x}=(x_1,\ldots,x_n)\in D$, $t\in\mathbb{R}$, and $\boldsymbol{y}_j=(y_{j1},\ldots,y_{jn})\in\mathbb{R}^n$ for $j=1,\ldots,L$. For any sufficiently smooth function $\boldsymbol{F}:\mathbb{R}\times D\rightarrow\mathbb{R}^n$ and $(t,\boldsymbol{x})\in\mathbb{R}\times D$, we denote by $\partial^L\boldsymbol{F}(t,\boldsymbol{x})$ a symmetric $L$-multilinear map which is applied to a product of $L$ vectors of $\mathbb{R}^n$, denoted as $\bigodot_{j=1}^L\boldsymbol{y}_j\in\mathbb{R}^{nL}$. This $L$-multilinear map is defined formally as
\begin{equation}\label{equ2-2}
\begin{split}
\partial^L\boldsymbol{F}(t,\boldsymbol{x})\bigodot_{j=1}^L\boldsymbol{y}_j=\sum_{1\leq i_1,\ldots,i_L\leq n}
\frac{\partial^L\boldsymbol{F}(t,\boldsymbol{x})}{\partial x_{i_1}\cdots\partial x_{i_L}}y_{1i_1}\cdots y_{Li_L}.
\end{split}
\end{equation}

\begin{remark}\label{remk2-1}
The $L$-multilinear map defined in \eqref{equ2-2} is the $L$th Fr\'echet derivative of the function $\boldsymbol{F}(t,\boldsymbol{x})$ with respect to the variable $\boldsymbol{x}$. For any positive integer $b$ and vector $\boldsymbol{y}\in\mathbb{R}^n$, we write $\boldsymbol{y}^b=\bigodot_{i=1}^b\boldsymbol{y}\in\mathbb{R}^{nb}$.
\end{remark}

The averaging method works by defining a collection of functions $\boldsymbol{f}_i:D\rightarrow\mathbb{R}^n$, each $\boldsymbol{f}_i$ called the $i$th-order averaged function, for $i=1,2,\ldots,k$, whose simple zeros control, for $|\varepsilon|>0$ sufficiently small, the limit cycles of system \eqref{equ2-1}. It has been established by Llibre and others in \cite{LNT14} that
\begin{equation}\label{equ2-3}
\begin{split}
\boldsymbol{f}_i(\boldsymbol{z})=\frac{\boldsymbol{y}_i(T,\boldsymbol{z})}{i!},
\end{split}
\end{equation}
where $\boldsymbol{y}_i:\mathbb{R}\times D\rightarrow\mathbb{R}^n$, for $i=1,2,\ldots,k$, are defined recursively by the following integral equations
\begin{equation}\label{equ2-4}
\begin{split}
\boldsymbol{y}_1(t,\boldsymbol{z})&=\int_0^t\boldsymbol{F}_1(\theta,\boldsymbol{z})d\theta,\\
\boldsymbol{y}_i(t,\boldsymbol{z})&=i!\int_0^t\Big(\boldsymbol{F}_i(\theta,\boldsymbol{z})
+\sum_{\ell=1}^{i-1}\sum_{S_{\ell}}\frac{1}{b_1!b_2!2!^{b_2}\cdots b_{\ell}!\ell!^{b_{\ell}}}\partial^L\boldsymbol{F}_{i-\ell}(\theta,\boldsymbol{z})
\bigodot_{j=1}^{\ell}\boldsymbol{y}_j(\theta,\boldsymbol{z})^{b_j}\Big)d\theta.
\end{split}
\end{equation}
For the sake of simplicity, in \eqref{equ2-4}, we assume that $\boldsymbol{F}_0=0$. The symbol $S_{\ell}$ is the set of all $\ell$-tuples $[b_1,b_2,\ldots,b_{\ell}]$ of nonnegative integers satisfying $b_1+2b_2+\cdots+\ell b_{\ell}=\ell$ and $L=b_1+b_2+\cdots+b_{\ell}$. Note that, related to the averaged function \eqref{equ2-3} there exist two fundamentally different cases of system \eqref{equ2-1}, namely, when $\boldsymbol{F}_0=0$ and when $\boldsymbol{F}_0\neq0$. We see that when $\boldsymbol{F}_0\neq0$, the formula for $\boldsymbol{y}_i(t,\boldsymbol{z})$ in \eqref{equ2-4} requires the solution of a Cauchy problem for $i=1,2,\ldots,n$ (see Remark 3 in \cite{LNT14}). The investigation in this paper is restricted to the case where $\boldsymbol{F}_0=0$.

The following averaging theorem provides a criterion for the existence of limit cycles. Its proof can be found in \cite{LNT14}.
\begin{theorem}\label{averaging-thm}
Assume that the following conditions hold for any $k\geq1$:
\begin{enumerate}
  \item[\emph{(a)}] for each $i=1,2,\ldots,k$ and $t\in\mathbb{R}$, the function $\boldsymbol{F}_i(t,\boldsymbol{x})$ is of class $\mathcal{C}^{k-i}$, $\partial^{k-i}\boldsymbol{F}_i$ is locally Lipschitz in $\boldsymbol{x}$, and $\boldsymbol{R}$ is a continuous function locally Lipschitz in $\boldsymbol{x}$;
  \item[\emph{(b)}] for some $j\in\{1,2,\ldots,k\}$, $\boldsymbol{f}_i=0$ for $i=1,2,\ldots,j-1$ and $\boldsymbol{f}_j\neq0$;
  \item[\emph{(c)}] for some $\boldsymbol{z}^*\in D$ with $\boldsymbol{f}_j(\boldsymbol{z}^*)=0$ we have ${\rm{det}}(J_{\boldsymbol{f}_j}(\boldsymbol{z}^*))\neq0$.

\end{enumerate}
Then, for any $|\varepsilon|>0$ sufficiently small, there exists a $T$-periodic solution $\boldsymbol{x}(t,\varepsilon)$ of \eqref{equ2-1} such that $\boldsymbol{x}(0,\varepsilon)\rightarrow\boldsymbol{z}^*$ when $\varepsilon\rightarrow0$.
\end{theorem}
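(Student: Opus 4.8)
The plan is to recast the search for a $T$-periodic solution as the search for a zero of a displacement map, and then to locate that zero via Brouwer degree. Denote by $\boldsymbol{x}(t,\boldsymbol{z},\varepsilon)$ the solution of \eqref{equ2-1} with $\boldsymbol{x}(0,\boldsymbol{z},\varepsilon)=\boldsymbol{z}$, and define the displacement $\boldsymbol{g}(\boldsymbol{z},\varepsilon)=\boldsymbol{x}(T,\boldsymbol{z},\varepsilon)-\boldsymbol{z}$. Since $\boldsymbol{F}_0=0$, the flow at $\varepsilon=0$ is the identity, so $\boldsymbol{g}(\boldsymbol{z},0)\equiv0$, and for each fixed small $\varepsilon$ a zero of $\boldsymbol{g}(\cdot,\varepsilon)$ corresponds exactly to a $T$-periodic orbit. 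The entire argument then reduces to showing that, after dividing out the correct power of $\varepsilon$, this displacement converges to $\boldsymbol{f}_j$ and inherits its nonzero degree.

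First I would establish the asymptotic expansion
\[
\boldsymbol{x}(t,\boldsymbol{z},\varepsilon)=\boldsymbol{z}+\sum_{i=1}^{k}\varepsilon^{i}\frac{\boldsymbol{y}_i(t,\boldsymbol{z})}{i!}+\varepsilon^{k+1}\boldsymbol{r}(t,\boldsymbol{z},\varepsilon),
\]
with $\boldsymbol{y}_i$ given by the recursion \eqref{equ2-4} and $\boldsymbol{r}$ uniformly bounded on $[0,T]\times\overline{V}\times(-\varepsilon_0,\varepsilon_0)$. To do so I would write the solution in integral form $\boldsymbol{x}(t)=\boldsymbol{z}+\int_0^t\sum_{i\geq1}\varepsilon^i\boldsymbol{F}_i(\theta,\boldsymbol{x}(\theta))\,d\theta+O(\varepsilon^{k+1})$, substitute a formal power series in $\varepsilon$ for $\boldsymbol{x}$, and match coefficients order by order. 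The Taylor expansion of each $\boldsymbol{F}_{i-\ell}(\theta,\boldsymbol{x})$ about $\boldsymbol{z}$ introduces the higher Fréchet derivatives $\partial^L\boldsymbol{F}_{i-\ell}$ of \eqref{equ2-2}, and collecting the contributions through the Faà di Bruno / Bell-polynomial combinatorics produces precisely the index set $S_\ell$ and the coefficients $1/(b_1!\,b_2!\,2!^{b_2}\cdots)$ in \eqref{equ2-4}; this is most cleanly carried out by induction on $i$. The regularity hypothesis (a), requiring $\boldsymbol{F}_i\in\mathcal{C}^{k-i}$ with $\partial^{k-i}\boldsymbol{F}_i$ locally Lipschitz and $\boldsymbol{R}$ continuous and locally Lipschitz, is exactly what is needed to differentiate the flow $k$ times in $\boldsymbol{z}$ and to control the remainder by a Gronwall estimate.

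Given the expansion, evaluating at $t=T$ and invoking \eqref{equ2-3} yields $\boldsymbol{g}(\boldsymbol{z},\varepsilon)=\sum_{i=1}^{k}\varepsilon^i\boldsymbol{f}_i(\boldsymbol{z})+O(\varepsilon^{k+1})$. Hypothesis (b) annihilates the first $j-1$ terms, so $\boldsymbol{g}(\boldsymbol{z},\varepsilon)=\varepsilon^j\big(\boldsymbol{f}_j(\boldsymbol{z})+O(\varepsilon)\big)$, and the rescaled map $\boldsymbol{G}(\boldsymbol{z},\varepsilon):=\varepsilon^{-j}\boldsymbol{g}(\boldsymbol{z},\varepsilon)$ extends continuously to $\varepsilon=0$ with $\boldsymbol{G}(\cdot,0)=\boldsymbol{f}_j$ and converges to $\boldsymbol{f}_j$ uniformly on $\overline{V}$ as $\varepsilon\to0$. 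Then I would appeal to the homotopy invariance of the Brouwer degree: by (c) we have $0\notin\boldsymbol{f}_j(\overline{V}\backslash\{\boldsymbol{z}^*\})$ and $d_B(\boldsymbol{f}_j,V,0)\neq0$, so for all sufficiently small $\varepsilon\neq0$ the straight-line homotopy between $\boldsymbol{G}(\cdot,\varepsilon)$ and $\boldsymbol{f}_j$ avoids $0$ on $\partial V$, whence $d_B(\boldsymbol{G}(\cdot,\varepsilon),V,0)=d_B(\boldsymbol{f}_j,V,0)\neq0$. The solution property of the degree then furnishes a zero $\boldsymbol{z}(\varepsilon)\in V$ of $\boldsymbol{G}(\cdot,\varepsilon)$, hence of $\boldsymbol{g}(\cdot,\varepsilon)$, giving the desired $T$-periodic solution; since $\boldsymbol{z}^*$ is the only zero of $\boldsymbol{f}_j$ in $\overline{V}$, shrinking $V$ forces $\boldsymbol{x}(0,\varepsilon)=\boldsymbol{z}(\varepsilon)\to\boldsymbol{z}^*$ as $\varepsilon\to0$.

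The main obstacle, I expect, lies entirely in the second step: proving that the $k$th-order Taylor-type expansion of the flow holds under the stated minimal smoothness, and that the combinatorial coefficients of the recursion \eqref{equ2-4} emerge exactly rather than merely up to undetermined constants. The degree-theoretic conclusion is routine once the expansion together with its uniform remainder estimate is in hand.
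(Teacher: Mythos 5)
Your proposal is correct and follows essentially the same route the paper takes: the paper does not prove Theorem \ref{averaging-thm} itself but cites \cite{LNT14}, and immediately after the statement it sets up exactly your framework --- the displacement function $\boldsymbol{\zeta}(\boldsymbol{z},\varepsilon)=\boldsymbol{x}(T,\boldsymbol{z},\varepsilon)-\boldsymbol{x}(0,\boldsymbol{z},\varepsilon)$, its expansion $\varepsilon\boldsymbol{f}_1+\cdots+\varepsilon^k\boldsymbol{f}_k+\mathcal{O}(\varepsilon^{k+1})$, and the identification of its zeros with initial conditions of $T$-periodic solutions, with the Bell-polynomial/Fa\`a di Bruno combinatorics of \eqref{equ2-4} and the Brouwer-degree conclusion carried out in the cited reference. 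You correctly identify the only genuinely delicate step (justifying the order-$k$ expansion of the flow under the minimal $\mathcal{C}^{k-i}$ regularity), which is precisely what \cite{LNT14} devotes its main lemma to.
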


\begin{remark}\label{remk2-2}
The notation ${\rm{det}}(J_{\boldsymbol{f}_j}(\boldsymbol{z}))\neq0$ means that the Jacobian determinant of $\boldsymbol{f}_j$ at $\boldsymbol{z}\in V$ is non-zero. In Theorem \ref{averaging-thm}, the function $\boldsymbol{f}_j$ for $j\in\{1,2,\ldots,k\}$ (defined in \eqref{equ2-3}) is assumed to be a $\mathcal{C}^1$ function. In this case, instead of Brouwer degree theory, the implicit function theorem could be used to
prove Theorem \ref{averaging-thm}, see \cite[Remark 4]{LNT14}.

\end{remark}

The assumption (a) of Theorem \ref{averaging-thm} assures the existence and uniqueness of the solution of \eqref{equ2-1} for each initial value on the interval $[0,T]$. Hence, for each $\boldsymbol{z}\in D$ we can let $\boldsymbol{x}(t,\boldsymbol{z},\varepsilon)$ be the solution of \eqref{equ2-1} such that $\boldsymbol{x}(0,\boldsymbol{z},\varepsilon)=\boldsymbol{z}$. For $\varepsilon_0>0$ sufficiently small, we consider the function $\boldsymbol{\zeta}:D\times(-\varepsilon_0,\varepsilon_0)\rightarrow\mathbb{R}^n$ defined by
\begin{equation}\label{equ2-7}
\begin{split}
\boldsymbol{\zeta}(\boldsymbol{z},\varepsilon)=\int_0^T\Big[\sum_{i=1}^k\varepsilon^i
\boldsymbol{F}_i(t,\boldsymbol{x}(t,\boldsymbol{z},\varepsilon))
+\varepsilon^{k+1}\boldsymbol{R}(t,\boldsymbol{x}(t,\boldsymbol{z},\varepsilon),\varepsilon)\Big]dt.\nonumber
\end{split}
\end{equation}
It follows from \eqref{equ2-1} that, for every $\boldsymbol{z}\in D$,
\begin{equation}\label{equ2-8}
\begin{split}
\boldsymbol{\zeta}(\boldsymbol{z},\varepsilon)=\boldsymbol{x}(T,\boldsymbol{z},\varepsilon)-\boldsymbol{x}(0,\boldsymbol{z},\varepsilon).\nonumber
\end{split}
\end{equation}
Therefore, finding a $T$-periodic solution $\boldsymbol{x}(t,\boldsymbol{z}_{\varepsilon},\varepsilon)$ of \eqref{equ2-1} is equivalent to finding a solution $\boldsymbol{z}_{\varepsilon}$ of $\boldsymbol{\zeta}(\boldsymbol{z},\varepsilon)=0$. Note that the function $\boldsymbol{\zeta}(\boldsymbol{z},\varepsilon)$ can be written in the form
\begin{equation}\label{equ2-9}
\begin{split}
\boldsymbol{\zeta}(\boldsymbol{z},\varepsilon)=\varepsilon \boldsymbol{f}_1(\boldsymbol{z})+\varepsilon^2 \boldsymbol{f}_2(\boldsymbol{z})+\cdots+\varepsilon^k \boldsymbol{f}_k(\boldsymbol{z})+\mathcal{O}(\varepsilon^{k+1}),\nonumber
\end{split}
\end{equation}
where the function $\boldsymbol{f}_i(\boldsymbol{z})$ is the one defined in \eqref{equ2-3} for $i=1,2,\ldots,k$.


In practical terms, the evaluation of the recurrence \eqref{equ2-4} is a computational problem. Recently, Novaes \cite{DDN17} used the partial Bell polynomials to provide an alternative formula for the recurrence. This formula can make the computational implementation of the averaged functions \eqref{equ2-3} easier. A partial Bell polynomial is expressed by
\begin{equation}\label{equ2-5}
\begin{split}
B_{\ell,m}(x_1,\ldots,x_{\ell-m+1})=\sum_{\widetilde{S}_{\ell,m}}\frac{\ell!}{b_1!b_2!\cdots b_{\ell-m+1}!}\prod_{j=1}^{\ell-m+1}\left(\frac{x_j}{j!}\right)^{b_j},
\end{split}
\end{equation}
where $\ell$ and $m$ are positive integers, $\widetilde{S}_{\ell,m}$ is the set of all $(\ell-m+1)$-tuples $[b_1,b_2,\ldots,b_{\ell-m+1}]$ of nonnegative integers satisfying $b_1+2b_2+\cdots+(\ell-m+1)b_{\ell-m+1}=\ell$, and $b_1+b_2+\cdots+b_{\ell-m+1}=m$.

The following result provides an alternative formula for the averaged functions.

\begin{theorem}\label{thm2-a}
For $i=1,2,\ldots,k$ the recurrence \eqref{equ2-4} reads
\begin{equation}\label{equ2-6}
\begin{split}
\boldsymbol{y}_1(t,\boldsymbol{z})&=\int_0^t\boldsymbol{F}_1(\theta,\boldsymbol{z})d\theta,\\
\boldsymbol{y}_i(t,\boldsymbol{z})&=i!\int_0^t\Big(\boldsymbol{F}_i(\theta,\boldsymbol{z})
+\sum_{\ell=1}^{i-1}\sum_{m=1}^{\ell}\frac{1}{\ell!}\partial^m\boldsymbol{F}_{i-\ell}(\theta,\boldsymbol{z})
B_{\ell,m}\big(\boldsymbol{y}_1(\theta,\boldsymbol{z}),\ldots,\boldsymbol{y}_{\ell-m+1}(\theta,\boldsymbol{z})\big)\Big)d\theta.
\end{split}
\end{equation}

\end{theorem}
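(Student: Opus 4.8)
The plan is to show that \eqref{equ2-4} and \eqref{equ2-6} agree term by term, for each fixed $\ell$, by regrouping the sum over $S_\ell$ according to the value of $L=b_1+\cdots+b_\ell$. The whole statement is a combinatorial bookkeeping identity reflecting the definition of the partial Bell polynomials; no analysis beyond the multilinearity and symmetry of $\partial^m\boldsymbol{F}$ is required, so I would argue directly rather than by induction over $i$.

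First I would fix $\ell\in\{1,\ldots,i-1\}$ and isolate the inner summand $\sum_{S_\ell}\frac{1}{b_1!b_2!2!^{b_2}\cdots b_\ell!\ell!^{b_\ell}}\partial^L\boldsymbol{F}_{i-\ell}\bigodot_{j=1}^\ell\boldsymbol{y}_j^{b_j}$ appearing in \eqref{equ2-4}, noting that its coefficient equals $\prod_{j=1}^\ell\frac{1}{b_j!\,(j!)^{b_j}}$ and that the number of vectors fed into the derivative is exactly $L=\sum_j b_j$, so that the order $\partial^L$ is determined by the tuple. I would then partition $S_\ell$ by the value $m:=L$. Since every tuple in $S_\ell$ satisfies $\sum_j jb_j=\ell$, the quantity $m=\sum_j b_j$ ranges over $1,\ldots,\ell$ (the extremes $m=1$ and $m=\ell$ being realized by $b_\ell=1$ and $b_1=\ell$ respectively). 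Moreover, if $b_j>0$ for some index $j$ in a tuple with $\sum_j b_j=m$, then $\ell=\sum_{j'}j'b_{j'}\ge j+(m-1)$, whence $j\le\ell-m+1$; thus the tuples of $S_\ell$ with $L=m$ are supported on $j\le\ell-m+1$ and coincide exactly with the index set $\widetilde{S}_{\ell,m}$ of \eqref{equ2-5}, giving the disjoint decomposition $S_\ell=\bigsqcup_{m=1}^\ell\widetilde{S}_{\ell,m}$.

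Next I would compare coefficients within each group. Pulling $\partial^m\boldsymbol{F}_{i-\ell}$ (with $m=L$) out of the sum over $\widetilde{S}_{\ell,m}$, the remaining factor is $\sum_{\widetilde{S}_{\ell,m}}\prod_j\frac{1}{b_j!\,(j!)^{b_j}}\bigodot_j\boldsymbol{y}_j^{b_j}$. On the other side, the definition \eqref{equ2-5} gives $\frac{1}{\ell!}B_{\ell,m}=\sum_{\widetilde{S}_{\ell,m}}\frac{1}{b_1!\cdots b_{\ell-m+1}!}\prod_j\bigl(\boldsymbol{y}_j/j!\bigr)^{b_j}$, and its coefficient $\frac{1}{\prod_j b_j!}\prod_j(j!)^{-b_j}$ is identical to the one just obtained. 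Interpreting the monomials of $B_{\ell,m}$ as the symmetric tensor products $\bigodot_j\boldsymbol{y}_j^{b_j}$ fed into the symmetric $m$-linear map $\partial^m\boldsymbol{F}_{i-\ell}$, the two expressions coincide, so the contribution of the group equals $\frac{1}{\ell!}\partial^m\boldsymbol{F}_{i-\ell}B_{\ell,m}(\boldsymbol{y}_1,\ldots,\boldsymbol{y}_{\ell-m+1})$. Summing over $m=1,\ldots,\ell$, then over $\ell$, and integrating reproduces \eqref{equ2-6}.

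The only delicate point is this last identification: one must verify that substituting the vector-valued arguments $\boldsymbol{y}_1,\ldots,\boldsymbol{y}_{\ell-m+1}$ into the scalar partial Bell polynomial and reading each monomial as the symmetric product agrees with the multilinear action in \eqref{equ2-4}. This is guaranteed precisely because $\partial^m\boldsymbol{F}_{i-\ell}$ is symmetric, so the order of its arguments is immaterial and only the multiplicities $b_j$ enter; everything else is the elementary matching of the two combinatorial prefactors together with the decomposition of the index set established above.
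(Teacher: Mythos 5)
Your argument is correct and complete. Note, however, that the paper itself offers no proof of Theorem \ref{thm2-a}: it presents the Bell-polynomial formula as a known result of Novaes \cite{DDN17}, so there is no in-paper argument to compare against. What you supply is a self-contained combinatorial verification, and every step checks out. The coefficient in \eqref{equ2-4} is indeed $\prod_{j=1}^{\ell}\frac{1}{b_j!\,(j!)^{b_j}}$, which coincides with the coefficient of the corresponding monomial in $\frac{1}{\ell!}B_{\ell,m}$; your partition of $S_{\ell}$ by the value $m=L=\sum_j b_j$ is legitimate, and the support bound $j\le\ell-m+1$ (from $\ell=\sum_{j'}j'b_{j'}\ge j+m-1$ whenever $b_j>0$) is exactly what identifies the fibre of that partition with $\widetilde{S}_{\ell,m}$ after padding with zeros; and you correctly flag and resolve the one genuinely non-trivial point, namely that substituting the vectors $\boldsymbol{y}_j$ into the scalar Bell polynomial and reading each monomial as a symmetric tensor product agrees with the multilinear action in \eqref{equ2-4} because $\partial^m\boldsymbol{F}_{i-\ell}$ is symmetric. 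Your direct term-by-term regrouping also makes clear that no induction on $i$ is needed, since the identity holds at the level of each integrand separately. The only stylistic caution is the double use of the symbol $m$ (degree of the system elsewhere in the paper versus the Bell index here), which you inherit from the statement itself.
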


The process of using the averaging method for studying limit cycles of differential systems can be divided into three steps \cite[Section 4]{HY21}. A slightly modified version of them is listed as follows.

\begin{description}
  \item[Step 1.] Write a perturbed system of the form \eqref{equ-3} in the standard form of averaging \eqref{equ2-1} up to the $k$th order in $\varepsilon$.
  \item[Step 2.] (a) Compute the exact formula of the $k$th-order integral function $\boldsymbol{y}_k(t,\boldsymbol{z})$ in \eqref{equ2-6}; (b) Derive the symbolic expression of the $k$th-order averaged function $\boldsymbol{f}_k(\boldsymbol{z})$ by \eqref{equ2-3}.
  \item[Step 3.] Determine the exact upper bound for the number of real isolated zeros of $\boldsymbol{f}_k(\boldsymbol{z})$.

\end{description}

The averaging method allows one to find periodic solutions of periodic non-autonomous differential systems (see \eqref{equ2-1}). However we are interested in using it for analyzing limit cycles bifurcating from a zero-Hopf equilibrium of the autonomous differential system \eqref{equ-3}. The determination of the exact number of isolated zeros of the averaged functions \eqref{equ2-3} up to every order is usually very difficult to be done. But for our zero-Hopf bifurcation analysis, the main work is to study the maximum number of real solutions of a polynomial system obtained from the averaged functions. Some advanced techniques from symbolic computation, such as Gr\"obner basis \cite{BB85}, triangular decomposition \cite{WTW00,DW01}, quantifier elimination \cite{GEC75,CHH91}, and real solution classification \cite{YHX01,LYBX05} may be used to perform the task. More detailed discussions of the averaging method, including applications, can be found in \cite{ABJL2004,SVM07,GGL2013,LMS15}.

\subsection{Mixed Volumes and Bernstein's Theorem}\label{sec-BBK}
We recall, from convex geometry, the concept of mixed volume and Bernstein's theorem, which we will apply to estimate the number of zeros of averaged functions. For background on polytopes and Minkowski sums, we refer the reader to the books \cite{GMZ1995} and \cite[Section 7]{DJD05}.

Let $\boldsymbol{e}=(e_1,\ldots,e_n)\in\mathbb{Z}^n$ be an integer vector. The corresponding \textit{Laurent monomial} in variables $\boldsymbol{x}=(x_1,\ldots,x_n)$ is $\boldsymbol{x}^{\boldsymbol{e}}=x_1^{e_1}\cdots x_n^{e_n}$. A Laurent polynomial $p$ is given by
\[p(\boldsymbol{x})=\sum_{\boldsymbol{e}\in\mathcal{A}}c_{\boldsymbol{e}}\boldsymbol{x}^{\boldsymbol{e}},\]
where $c_{\boldsymbol{e}}\in\mathbb{C}$ and $\mathcal{A}$, the support of $p$, is a finite subset of $\mathbb{Z}^n$. In short, $p$ is an element of the ring $\mathbb{C}[x_1^{\pm},\ldots,x_n^{\pm}]$. A Laurent polynomial system $P(\boldsymbol{x})=(p_1(\boldsymbol{x}),\ldots,p_n(\boldsymbol{x}))$ is an $n$-tuple of nonzero Laurent polynomials. Let $\mathcal{A}_i$ be the support of $p_i(\boldsymbol{x})$, for $i=1,\ldots,n$.

A polytope is a nonempty compact convex set with finitely many extremal points, called vertices.

\begin{definition}\label{NP-1}
The Newton polytope of $p_i$, denoted $Q_i={\rm{conv}}(\mathcal{A}_i)\subset\mathbb{R}^n$, is the convex hull of support $\mathcal{A}_i$.
\end{definition}

There are two operations induced by the vector space structure in $\mathbb{R}^n$ that form new polytopes from old ones.

\begin{definition}\label{MK-sum}
Let $P,Q$ be polytopes in $\mathbb{R}^n$ and let $\lambda\geq0$ be a real number.
\begin{itemize}
  \item[\emph{(a)}] The \textit{Minkowski sum} of $P$ and $Q$, denoted $P+Q$, is
      \[P+Q=\{p+q: p\in P~~\text{and}~~q\in Q\},\]
      where $p+q$ denotes the usual vector sum in $\mathbb{R}^n$.
  \item[\emph{(b)}] The polytope $\lambda P$ is defined by $\lambda P=\{\lambda p: p\in P\}$, where $\lambda p$ is the usual scalar multiplication on $\mathbb{R}^n$.
\end{itemize}

\end{definition}

Let ${\rm{Vol}}(A)$ denote the usual Euclidean volume of polytope $A\subset\mathbb{R}^n$. The next result concerns the volumes of linear combinations of polytopes formed according to Definition \ref{MK-sum}.

\begin{proposition}\label{prop-poly-1}
Consider any collection of polytopes $P_1,\ldots,P_r$ in $\mathbb{R}^n$, and let $\lambda_1,\ldots,\lambda_r\in\mathbb{R}$ be nonnegative. Then ${\rm{Vol}}(\lambda_1P_1+\cdots+\lambda_rP_r)$ is a homogeneous polynomial function of degree $n$ in the $\lambda_i$.
\end{proposition}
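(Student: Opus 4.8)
The plan is to reduce everything to a single triangulation whose pieces have volumes that are visibly polynomial in the $\lambda_i$. Write $Q(\lambda)=\lambda_1P_1+\cdots+\lambda_rP_r$ for $\lambda=(\lambda_1,\ldots,\lambda_r)$ in the open positive orthant $\{\lambda_i>0\}$. The first and most important observation is that the combinatorial type of $Q(\lambda)$ does not depend on the particular positive values $\lambda_i$. Indeed, the outer normal fan of a Minkowski sum is the common refinement of the normal fans of the summands, and scaling a polytope by a positive factor leaves its normal fan unchanged; hence the normal fan of $Q(\lambda)$ equals the common refinement $\mathcal{N}$ of the normal fans of $P_1,\ldots,P_r$ for every $\lambda$ in the open orthant. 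Each full-dimensional cone $\sigma\in\mathcal{N}$ selects a unique vertex $v_i^{(\sigma)}$ of each $P_i$, and the corresponding vertex of $Q(\lambda)$ is $\sum_{i=1}^r\lambda_iv_i^{(\sigma)}$, a linear function of $\lambda$.

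Second, I would fix one combinatorial triangulation of $Q(\lambda)$ into $n$-simplices that is valid for all $\lambda$ in the open orthant (for example a pulling triangulation determined by a fixed ordering of the vertices, which depends only on the constant combinatorial type). Because every vertex of $Q(\lambda)$ is linear in $\lambda$, each simplex of this triangulation is the convex hull of a fixed index set of vertices $w_0(\lambda),\ldots,w_n(\lambda)$ that are linear in $\lambda$, and its Euclidean volume equals $\frac{1}{n!}\big|\det\big(w_1(\lambda)-w_0(\lambda),\ldots,w_n(\lambda)-w_0(\lambda)\big)\big|$. The determinant is a sum of products of $n$ entries, each entry being linear in $\lambda$, so up to sign it is a homogeneous polynomial of degree $n$ in the $\lambda_i$. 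Orienting every simplex so that its signed volume is positive at one fixed base point $\lambda^0$ in the orthant, and noting that each determinant is continuous and nonvanishing on the connected open orthant, the chosen sign is constant there; hence ${\rm{Vol}}(Q(\lambda))$, being the sum of the simplex volumes, agrees on the open orthant with a single homogeneous polynomial of degree $n$ in $\lambda$.

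Third, the identity between ${\rm{Vol}}(Q(\lambda))$ and this polynomial extends from the open orthant to its closure $\{\lambda_i\geq0\}$ by continuity of both sides, which gives the claimed statement on the whole nonnegative region. Homogeneity of degree $n$ is automatic because each simplex volume is already homogeneous of degree $n$; as a cross-check, replacing $\lambda$ by $t\lambda$ gives $Q(t\lambda)=t\,Q(\lambda)$, and since Euclidean volume in $\mathbb{R}^n$ satisfies ${\rm{Vol}}(tA)=t^n{\rm{Vol}}(A)$ for $t\geq0$, the polynomial must indeed be homogeneous of degree $n$.

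The step I expect to be the main obstacle is the first one: rigorously establishing that the combinatorial type of $Q(\lambda)$, together with a compatible triangulation, can be chosen uniformly over the open orthant. This requires the normal-fan description of Minkowski sums and a little care to handle lower-dimensional summands (for which some $P_i$ may contribute no volume) and degenerate cones. Once the uniform combinatorial structure is in hand, the volume computation and the passage to the polynomial are routine.
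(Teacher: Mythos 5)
The paper does not actually prove this proposition: it is stated in Section \ref{sec-BBK} as standard background from convex geometry (essentially Minkowski's theorem on volumes of Minkowski combinations, quoted from the textbook literature, cf.\ \cite{DJD05}), so there is no proof of record to compare yours against. Judged on its own, your argument is correct and follows one of the two standard routes. The facts you invoke --- that for $\lambda$ in the open positive orthant the normal fan of $\lambda_1P_1+\cdots+\lambda_rP_r$ is the common refinement of the normal fans of the $P_i$ and hence independent of $\lambda$; that each full-dimensional cone of this refinement selects a vertex of each summand, so the vertices of the sum are linear in $\lambda$; and that a pulling triangulation is determined combinatorially by the (constant) face lattice together with a vertex ordering --- are all true, and the determinant computation, the constancy of the signs on the connected open orthant, and the continuity argument for passing to the closed orthant are sound. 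Two small points you should make explicit in a final write-up: (i) if $\sum_i\lambda_iP_i$ fails to be full-dimensional for one (hence every) $\lambda$ in the open orthant, then it is lower-dimensional for all $\lambda\geq0$, the volume is identically zero, and the claim is trivial, so the triangulation step may assume full dimension; (ii) the continuity of ${\rm{Vol}}$ along $\lambda\mapsto\sum_i\lambda_iP_i$ on the closed orthant deserves a sentence, e.g.\ Lipschitz dependence of the Minkowski combination in the Hausdorff metric combined with continuity of volume on compact convex sets. For contrast, the usual textbook proof proceeds by induction on $n$ via the facet decomposition ${\rm{Vol}}(Q)=\frac{1}{n}\sum_{F}h_Q(u_F)\,{\rm{Vol}}_{n-1}(F)$, where the support values $h_Q(u_F)$ are linear in $\lambda$ and the facet volumes are homogeneous of degree $n-1$ by induction; that route avoids constructing a uniform triangulation but rests on the same normal-fan stability that you correctly identified as the crux.
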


When $r=n$, we can single out one particular term in the polynomial ${\rm{Vol}}(\lambda_1P_1+\cdots+\lambda_nP_n)$ that has special meaning for the whole collection of polytopes.

\begin{definition}\label{MV-1}
The $n$-dimensional \textit{mixed volume} of a collection of polytopes $P_1,\ldots,P_n$, denoted ${\rm{MV}}(P_1,\ldots,P_n)$, is the coefficient of the monomial $\lambda_1\lambda_2\cdots\lambda_n$ in ${\rm{Vol}}(\lambda_1P_1+\cdots+\lambda_nP_n)$.
\end{definition}

An explicit expression for the mixed volume is obtained by the Exclusion-Inclusion principle:
\[{\rm{MV}}(P_1,\ldots,P_n)=\sum_{I\subset\{1,\ldots,n\}}(-1)^{n-|I|}{\rm{Vol}}(\sum_{i\in I}P_i),\]
where $|\cdot|$ denotes set cardinality and $\sum_{i\in I}P_i$ is the Minkowski sum of polytopes.

Let $\mathbb{C}^*=\mathbb{C}\backslash\{\boldsymbol{0}\}$ denote the set of nonzero complex numbers. We are now ready to state Bernstein's theorem on the number of roots in $(\mathbb{C}^*)^n$ for a system of $n$ polynomials.

\begin{theorem}[\cite{Ber75}]\label{Ber-thm}
For any given Laurent polynomials $p_1,\ldots,p_n$ over $\mathbb{C}$ with Newton polytopes $Q_1,\ldots,Q_n\subset\mathbb{R}^n$, the number of common isolated zeros in $(\mathbb{C}^*)^n$, counted with multiplicities, is bounded by the mixed volume ${\rm{MV}}(Q_1,\ldots,Q_n)$. Moreover, for generic choices of the coefficients in the $p_i$, the number of isolated zeros is exactly ${\rm{MV}}(Q_1,\ldots,Q_n)$.
\end{theorem}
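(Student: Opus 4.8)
The plan is to split the statement into two parts and prove them in the opposite order to how they are stated: first establish the \emph{exact} count ${\rm{MV}}(Q_1,\ldots,Q_n)$ for generic coefficients, and then deduce the \emph{upper bound} for arbitrary coefficients by a semicontinuity argument. For the bound, I would use that an isolated zero in $(\mathbb{C}^*)^n$ of a specialized system, counted with multiplicity, is always a limit of torus zeros of nearby generic systems: under specialization solutions may collide (preserving the multiplicity count) or escape to the boundary where some coordinate tends to $0$ or $\infty$ (decreasing the count), but no new isolated torus solution is created. Hence, once the generic value is shown to be exactly ${\rm{MV}}(Q_1,\ldots,Q_n)$, every particular choice of coefficients yields at most that many isolated zeros in $(\mathbb{C}^*)^n$, which is the asserted bound.

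For the exact generic count I would use a polyhedral (toric) homotopy. Choose for each $i$ a generic real lifting $\omega_i:\mathcal{A}_i\to\mathbb{R}$ and form the deformed system $p_i(\boldsymbol{x},t)=\sum_{\boldsymbol{a}\in\mathcal{A}_i}c_{i,\boldsymbol{a}}\,t^{\omega_i(\boldsymbol{a})}\,\boldsymbol{x}^{\boldsymbol{a}}$ for $i=1,\ldots,n$, with a deformation parameter $t\in(0,1]$ (so that $t=1$ recovers the target system). The generic lifts induce a coherent fine mixed subdivision of the Minkowski sum $Q_1+\cdots+Q_n$; its cells of the form $C_1+\cdots+C_n$ with each $C_i$ an edge of $Q_i$ and $\sum_i\dim C_i=n$ are the \emph{mixed cells}. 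A standard fact about such subdivisions, compatible with the normalization of Definition \ref{MV-1}, is that the sum over all mixed cells of $|\det(v_1,\ldots,v_n)|$, where $v_i$ is the edge vector of $C_i$, equals ${\rm{MV}}(Q_1,\ldots,Q_n)$.

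Next I would track the solution paths $\boldsymbol{x}(t)\in(\mathbb{C}^*)^n$ as $t\to0^+$. Writing each coordinate as a Puiseux series $x_j(t)=t^{\alpha_j}(\xi_j+o(1))$ and substituting, the lowest-order terms of each $p_i$ in $t$ are selected by the direction $\boldsymbol{\alpha}$ together with the lift $\omega_i$; for a generic lift these dominant balances are exactly the binomial \emph{cell systems} attached to the mixed cells. After a monomial change of coordinates, each cell system is a system of $n$ binomials whose number of solutions in $(\mathbb{C}^*)^n$ equals $|\det(v_1,\ldots,v_n)|$, a count obtained from the Smith normal form of the integer matrix of exponent differences. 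Letting $t\to0$, every path converges to a solution of one such cell system and, conversely, every cell solution initiates a unique path; summing over all mixed cells gives a total number of paths, hence of generic torus solutions, equal to ${\rm{MV}}(Q_1,\ldots,Q_n)$.

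The main obstacle is the analytic control of the homotopy. One must show that, for generic coefficients $c_{i,\boldsymbol{a}}$ and generic lift $\omega_i$, all solution paths are smooth on $(0,1]$, stay in the torus, admit convergent Puiseux expansions, and neither collide nor diverge for $t\in(0,1]$, so that the number of paths is independent of $t$ and equals both the number of torus solutions of the target system at $t=1$ and the sum of binomial-system solutions as $t\to0$. Establishing this no-collision/no-divergence property---equivalently, that the limiting behaviour is captured precisely by the mixed cells and that each mixed cell contributes with multiplicity one for generic data---is the technical heart of the argument, and it is exactly where genericity of both the coefficients and the lifts is indispensable.
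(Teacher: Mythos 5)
The paper does not prove this statement: Theorem \ref{Ber-thm} is imported verbatim from Bernstein's 1975 paper \cite{Ber75} (with the remark that it was alternatively proven by Khovanskii \cite{Kho77} and builds on Kushnirenko \cite{Kus76}), so there is no in-paper argument to compare yours against. Judged on its own, your outline is the standard polyhedral-homotopy (Huber--Sturmfels style) proof rather than Bernstein's original inductive argument via facial subsystems, and it is structurally sound: the reduction of the upper bound to the generic case by conservation of local multiplicity under perturbation is correct (positive-dimensional components are harmless since only isolated zeros are counted), the identification of the generic count with the sum of $|\det(v_1,\ldots,v_n)|$ over mixed cells is compatible with the normalization of Definition \ref{MV-1} (for which the mixed volume of $n$ segments is exactly that determinant), and the binomial cell systems counted via Smith normal form give the right local contributions.

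That said, as a proof the proposal is incomplete in exactly the place you flag. Three ingredients are asserted but not established: (i) that the coherent subdivision induced by generic lifts is fine and that its mixed cells carry the full mixed volume (this is a genuine combinatorial lemma about Minkowski sums, not a "standard fact" one can wave at); (ii) that every solution path of the deformed system admits a Puiseux parametrization whose valuation vector is the inner normal of a lower face of the lifted Cayley/Minkowski configuration, so that the $t\to0$ limits are governed precisely by the mixed cells and by nothing else; and (iii) the no-collision, no-escape property on $(0,1]$ for generic data, which is what makes the path count constant in $t$. Item (ii) in particular hides the hardest analysis (one must rule out branches whose limit order is not attained on any mixed cell, and show each cell-system solution extends to a unique smooth path), and without it the equality of the generic root count with the mixed volume is not proved. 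So your write-up is an accurate roadmap of a known proof, but the technical heart you identify would need to be supplied before it could stand as a proof of the theorem.
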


The mixed volume ${\rm{MV}}(Q_1,\ldots,Q_n)$ is sometimes called the \textit{BKK bound}, as it relies heavily on the work by Kushnirenko \cite{Kus76} and has been alternatively proven by Khovanskii \cite{Kho77}. The following result is due to Li and Wang \cite{LW96}. It extends Bernstein's theorem to provide an upper bound on the number of common zeros in $\mathbb{C}^n$.

\begin{theorem}\label{LW-thm}
Let $p_1,\ldots,p_n$ be Laurent polynomials over $\mathbb{C}$ with supports $\mathcal{A}_1,\ldots,\mathcal{A}_n$. Then, the mixed volume ${\rm{MV}}(\mathcal{A}_1\cup\{\boldsymbol{0}\},\ldots,\mathcal{A}_n\cup\{\boldsymbol{0}\})$ is an upper bound for the number of common isolated zeros, counted with multiplicities, of the $p_i$ in $\mathbb{C}^n$.
\end{theorem}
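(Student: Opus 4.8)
The plan is to deduce the affine bound from Bernstein's theorem (Theorem \ref{Ber-thm}) by a perturbation argument that enlarges each support with the constant monomial $\boldsymbol{x}^{\boldsymbol{0}}=1$ and then lets the perturbation tend to zero. Write $P=(p_1,\ldots,p_n)$ and set $Q_i={\rm conv}(\mathcal{A}_i\cup\{\boldsymbol{0}\})$, so that ${\rm MV}(\mathcal{A}_1\cup\{\boldsymbol{0}\},\ldots,\mathcal{A}_n\cup\{\boldsymbol{0}\})={\rm MV}(Q_1,\ldots,Q_n)$. First I would fix a generic Laurent system $G=(g_1,\ldots,g_n)$ whose support is exactly $\mathcal{A}_i\cup\{\boldsymbol{0}\}$, so in particular each $g_i$ carries a nonzero constant term, and consider the one-parameter family
\[
P_{\varepsilon}=(p_1+\varepsilon g_1,\ldots,p_n+\varepsilon g_n),\qquad \varepsilon\in\mathbb{C}.
\]
For every sufficiently small $\varepsilon\neq0$ no coefficient of $p_i$ is cancelled, so $P_{\varepsilon}$ has support $\mathcal{A}_i\cup\{\boldsymbol{0}\}$ and Newton polytope $Q_i$; and for a generic choice of $G$ the coefficients of $P_{\varepsilon}$ avoid the finitely many exceptional conditions, so that Theorem \ref{Ber-thm} applies and yields exactly ${\rm MV}(Q_1,\ldots,Q_n)$ isolated zeros in $(\mathbb{C}^*)^n$.

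Next I would argue that for generic $P_{\varepsilon}$ these torus zeros are in fact \emph{all} of its zeros in $\mathbb{C}^n$. Restricting the system to a coordinate hyperplane $\{x_j=0\}$ kills every monomial with positive $x_j$-exponent but keeps the constant monomial, so the restricted system consists of $n$ equations in the $n-1$ remaining variables, each still carrying a generic constant term $\varepsilon\,g_i(\boldsymbol{0})$. By a standard dimension count, $n$ generic hypersurfaces in an $(n-1)$-dimensional space have empty intersection, so the restricted system is generically inconsistent. Hence for generic coefficients $P_{\varepsilon}$ has no zero on any coordinate hyperplane, and its ${\rm MV}(Q_1,\ldots,Q_n)$ torus zeros exhaust its affine zeros in $\mathbb{C}^n$.

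Finally I would let $\varepsilon\to0$ and invoke conservation of multiplicity. Since $P_{\varepsilon}\to P$ through a polynomial family, each isolated zero $\boldsymbol{\xi}$ of $P$ in $\mathbb{C}^n$ of multiplicity $\mu(\boldsymbol{\xi})$ possesses a neighborhood $U_{\boldsymbol{\xi}}$ that, for all small $\varepsilon\neq0$, contains exactly $\mu(\boldsymbol{\xi})$ zeros of $P_{\varepsilon}$ counted with multiplicity; the neighborhoods attached to distinct isolated zeros can be chosen disjoint and disjoint from any positive-dimensional component of $\{P=0\}$. Summing over the finitely many isolated zeros of $P$ and comparing with the total count of $P_{\varepsilon}$ gives
\[
\sum_{\boldsymbol{\xi}}\mu(\boldsymbol{\xi})\le {\rm MV}(Q_1,\ldots,Q_n)={\rm MV}(\mathcal{A}_1\cup\{\boldsymbol{0}\},\ldots,\mathcal{A}_n\cup\{\boldsymbol{0}\}),
\]
which is the claimed bound.

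The step I expect to be the main obstacle is this last one: making conservation of multiplicity rigorous requires controlling the limiting behavior of the ${\rm MV}(Q_1,\ldots,Q_n)$ moving zeros of $P_{\varepsilon}$ as $\varepsilon\to0$, since some of them may diverge to infinity while others accumulate on positive-dimensional components of $\{P=0\}$, and one must guarantee that no isolated zero of $P$ is undercounted. This is most cleanly handled by compactifying $\mathbb{C}^n$ inside the projective toric variety determined by the polytopes $Q_i$, where the total intersection number stays constant along the family and the contributions at infinity are nonnegative, so that the finite isolated part is bounded by ${\rm MV}(Q_1,\ldots,Q_n)$.
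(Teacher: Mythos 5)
The paper does not prove this statement: Theorem \ref{LW-thm} is stated without proof as a result of Li and Wang \cite{LW96}, so there is no in-paper argument to compare yours against. Judged on its own, your perturbation proof is essentially the standard one (close in spirit to the homotopy argument of \cite{LW96} and to the treatment in \cite{DJD05}), and it is sound. The three ingredients are all correct and all genuinely needed: (i) for a generic direction $G$ the line $P+\varepsilon G$ leaves every proper Zariski-closed exceptional locus for all but finitely many $\varepsilon$, so Theorem \ref{Ber-thm} applies to $P_\varepsilon$; (ii) the adjoined constant term is exactly what forces the incidence variety over each coordinate hyperplane to have dimension one less than the coefficient space, so generic members of the family have no zeros outside $(\mathbb{C}^*)^n$ --- this is the step where $\boldsymbol{0}\in\mathcal{A}_i\cup\{\boldsymbol{0}\}$ is used and where the argument would fail without it; (iii) local conservation of the intersection number at each isolated zero of $P$. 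Two remarks. First, in (ii) the restricted hypersurfaces are not fully generic (their coefficients move along a single line), so the clean justification is the incidence-variety dimension count rather than ``$n$ generic hypersurfaces in $\mathbb{C}^{n-1}$''; the conclusion is the same. Second, your closing worry about roots of $P_\varepsilon$ escaping to infinity or accumulating on positive-dimensional components is a non-issue for the inequality you need: once $P_\varepsilon$ has only finitely many zeros in all of $\mathbb{C}^n$, each isolated zero $\boldsymbol{\xi}$ of $P$ absorbs exactly $\mu(\boldsymbol{\xi})$ of them inside a small ball whose boundary sphere avoids $\{P=0\}$ (a Rouch\'e/local-degree argument), these balls are pairwise disjoint, and the total is at most ${\rm MV}(Q_1,\ldots,Q_n)$; escaping or colliding roots only threaten the reverse inequality, which the theorem does not claim. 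The toric compactification is therefore a convenience here, not a necessity.
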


An analysis of genericity conditions for solutions in $\mathbb{C}^n$ appears in \cite{Roj99} and an expository account of recent work in this area can be found in \cite{Roj03,DJD05}.

\section{Number of Limit Cycles Bifurcating from a Zero-Hopf \\ Equilibrium}\label{sect3-0}

\subsection{Bounds on the Number of Limit Cycles for Certain Generic Systems}\label{sub-sect3-1}
In order to apply the averaging method to study the zero-Hopf bifurcation of perturbed system \eqref{equ-3}, we need to perform several changes of coordinates (see \eqref{equA3-1} and \eqref{equA3-4}) to transform the perturbed system into the standard form of averaging. The approach used here is a straightforward generalization of the one proposed in \cite{BLV18}. The main results of this paper are based on the following lemma.
\begin{lemma}\label{lem-main-1}
The parametric formula of the standard form of averaging associated to system \eqref{equ-3} is as follows:
\begin{equation}\label{equ3-0-1}
\begin{split}
\frac{dR}{d\theta}&=\frac{R\cdot\sum_{i=1}^k\varepsilon^iS_{i,1}(\theta,R,X_3,\ldots,X_n)}{bR+\sum_{i=1}^k\varepsilon^iS_{i,2}(\theta,R,X_3,\ldots,X_n)},\\
&=\varepsilon F_{1,1}(\theta,R,X_3,\ldots,X_n)+\varepsilon^2F_{2,1}(\theta,R,X_3,\ldots,X_n)\\
&\quad+\cdots+\varepsilon^kF_{k,1}(\theta,R,X_3,\ldots,X_n)+\mathcal{O}(\varepsilon^{k+1}),\\
\frac{dX_s}{d\theta}&=\frac{R\cdot\sum_{i=1}^k\varepsilon^iS_{i,s}(\theta,R,X_3,\ldots,X_n)}{bR+\sum_{i=1}^k\varepsilon^iS_{i,2}(\theta,R,X_3,\ldots,X_n)},\\
&=\varepsilon F_{1,s}(\theta,R,X_3,\ldots,X_n)+\varepsilon^2F_{2,s}(\theta,R,X_3,\ldots,X_n)\\
&\quad+\cdots+\varepsilon^kF_{k,s}(\theta,R,X_3,\ldots,X_n)+\mathcal{O}(\varepsilon^{k+1}),
\quad s=3,\ldots,n,
\end{split}
\end{equation}
where
\begin{equation}\label{equ3-0-2}
\begin{split}
S_{i,1}&=a_iR+\sum(p_{1,i_1,\ldots,i_n,i-1}\cos\theta+p_{2,i_1,\ldots,i_n,i-1}\sin\theta)(R\cos\theta)^{i_1}(R\sin\theta)^{i_2}X_3^{i_3}\cdots X_n^{i_n},\\
S_{i,2}&=b_iR+\sum(p_{2,i_1,\ldots,i_n,i-1}\cos\theta-p_{1,i_1,\ldots,i_n,i-1}\sin\theta)(R\cos\theta)^{i_1}(R\sin\theta)^{i_2}X_3^{i_3}\cdots X_n^{i_n},\\
S_{i,s}&=c_{s,i}X_s+\sum p_{s,i_1,\ldots,i_n,i-1}(R\cos\theta)^{i_1}(R\sin\theta)^{i_2}X_3^{i_3}\cdots X_n^{i_n}\\
\end{split}
\end{equation}
with the sums in \eqref{equ3-0-2} ranging over $i_1+\cdots+i_n=m$, and the functions $F_{i,j}(\theta,R,X_3,\ldots,X_n)$, $i=1,2,\ldots,k$, $j=1,3,\ldots,n$, are obtained by carrying Taylor expansion of the expressions in \eqref{equ3-0-1} with respect to the variable $\varepsilon$ around $\varepsilon=0$.
\end{lemma}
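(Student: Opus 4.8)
The plan is to bring system \eqref{equ-3} into the standard averaging form \eqref{equ2-1} by means of the two changes of variables recorded in \eqref{equA3-1} and \eqref{equA3-4} --- a passage to cylindrical coordinates adapted to the rotation, followed by a homogeneity-driven rescaling of the radial and transverse variables --- and then to replace the time $t$ by the angular variable $\theta$ as the independent variable. First I would set $x_1=r\cos\theta$, $x_2=r\sin\theta$ and leave $x_3,\ldots,x_n$ unchanged. Using $\dot r=\dot x_1\cos\theta+\dot x_2\sin\theta$ and $r\dot\theta=-\dot x_1\sin\theta+\dot x_2\cos\theta$, the linear part of \eqref{equ-3} yields $\dot r=(\sum_j a_j\varepsilon^j)\,r+\cdots$ and $r\dot\theta=(b+\sum_j b_j\varepsilon^j)\,r+\cdots$, since the pure rotation terms cancel in $\dot r$ and reinforce in $r\dot\theta$; the degree-$m$ part produces exactly the trigonometric combinations $p_{1,\ldots}\cos\theta+p_{2,\ldots}\sin\theta$ and $p_{2,\ldots}\cos\theta-p_{1,\ldots}\sin\theta$ appearing in \eqref{equ3-0-2}.

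Next I would rescale $r=\varepsilon^{1/(m-1)}R$ and $x_s=\varepsilon^{1/(m-1)}X_s$ for $s=3,\ldots,n$. The role of this exponent is purely one of homogeneity: because each $P_{s,m}$ is homogeneous of degree $m$ while the time derivatives and linear terms are homogeneous of degree $1$, dividing the scaled equations by $\varepsilon^{1/(m-1)}$ multiplies every degree-$m$ monomial by $\varepsilon^{(m-1)/(m-1)}=\varepsilon$, while the linear perturbations $\sum_j a_j\varepsilon^j$, $\sum_j b_j\varepsilon^j$, $\sum_j c_{s,j}\varepsilon^j$ are left untouched. Combining this explicit factor $\varepsilon$ with the factor $\varepsilon^{j-1}$ hidden in $p_{s,i_1,\ldots,i_n}=\sum_j\varepsilon^{j-1}p_{s,i_1,\ldots,i_n,j-1}$ groups the contributions by order and gives, for each $i$, precisely the polynomials $S_{i,1},S_{i,2},S_{i,s}$ of \eqref{equ3-0-2}, so that $\dot R=\sum_{i\ge1}\varepsilon^iS_{i,1}$, $R\dot\theta=bR+\sum_{i\ge1}\varepsilon^iS_{i,2}$ and $\dot X_s=\sum_{i\ge1}\varepsilon^iS_{i,s}$.

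Finally I would take $\theta$ as the new independent variable. Since $\dot\theta=b+\mathcal O(\varepsilon)$ with $b\neq0$, on any compact region in $(R,X_3,\ldots,X_n)$ and for $|\varepsilon|$ sufficiently small the correspondence $t\mapsto\theta$ is a diffeomorphism, so I may form $dR/d\theta=\dot R/\dot\theta$ and $dX_s/d\theta=\dot X_s/\dot\theta$; clearing the $R^{-1}$ by multiplying numerator and denominator by $R$ produces exactly the rational right-hand sides of \eqref{equ3-0-1} with common denominator $bR+\sum_i\varepsilon^iS_{i,2}$. As this denominator equals $bR\neq0$ at $\varepsilon=0$, each quotient is analytic in $\varepsilon$ near $\varepsilon=0$, and expanding in $\varepsilon$ gives the functions $F_{i,j}$ together with a remainder $\mathcal O(\varepsilon^{k+1})$; in particular the $\varepsilon^0$ coefficient vanishes, which is precisely the condition $\boldsymbol F_0=0$ required by the averaging theory, and the system is $2\pi$-periodic in $\theta$, hence of the form \eqref{equ2-1}.

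The step I expect to be the main obstacle is the bookkeeping of the powers of $\varepsilon$ through the fractional scaling: one must verify that the exponent $1/(m-1)$ is chosen so that, after dividing by $\varepsilon^{1/(m-1)}$, all fractional powers cancel and the equations for $dR/d\theta$, $dX_s/d\theta$ become genuine power series in integer powers of $\varepsilon$. This is exactly where it matters that the perturbation consists only of homogeneous terms of the single degree $m$; in the polynomial setting of interest one takes $Q_s=0$, and otherwise the tails $Q_s$ would affect only orders beyond those retained. A second point to check is that the cancellation of the rotation terms in $\dot r$ is exact, so that the leading coefficient of the denominator is $bR$ and free of $\theta$, which is what legitimizes the use of $\theta$ as the new time and guarantees $\boldsymbol F_0=0$.
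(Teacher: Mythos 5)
Your proposal is correct and follows essentially the same route as the paper's proof: the cylindrical change of variables \eqref{equA3-1}, the rescaling \eqref{equA3-4} by $\sqrt[m-1]{\varepsilon}$, passage to $\theta$ as the independent variable using $b\neq0$, and Taylor expansion in $\varepsilon$ around $\varepsilon=0$. The only (immaterial) difference is the order in which you rescale and switch to $\theta$, and your discussion of the homogeneity bookkeeping is a correct elaboration of a step the paper leaves implicit.
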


\begin{proof}
Making the change of variables
\begin{equation}\label{equA3-1}
\begin{split}
x_1=r\cos\theta,\quad x_2=r\sin\theta,\quad x_s=x_s,\quad s=3,\ldots,n,
\end{split}
\end{equation}
with $r>0$, we see that system \eqref{equ-3} becomes
\begin{equation}\label{equA3-2}
\begin{split}
\frac{dr}{dt}&=\sum_{i=1}^k\varepsilon^i a_ir+\sum_{i=0}^{k-1}\varepsilon^i\sum(p_{1,i_1,i_2,\ldots,i_n,i}\cos\theta
+p_{2,i_1,i_2,\ldots,i_n,i}\sin\theta)(r\cos\theta)^{i_1}(r\sin\theta)^{i_2}x_3^{i_3}\cdots x_n^{i_n},\\
\frac{d\theta}{dt}&=\frac{1}{r}\Big((b+\sum_{i=1}^k\varepsilon^ib_i)r
+\sum_{i=0}^{k-1}\varepsilon^i\sum(p_{2,i_1,i_2,\ldots,i_n,i}\cos\theta-p_{1,i_1,i_2,\ldots,i_n,i}\sin\theta)(r\cos\theta)^{i_1}(r\sin\theta)^{i_2}x_3^{i_3}\cdots x_n^{i_n}\Big),\\
\frac{dx_s}{dt}&=\sum_{i=1}^{k}\varepsilon^ic_{s,i}x_s+\sum_{i=0}^{k-1}\varepsilon^i\sum p_{s,i_1,i_2,\ldots,i_n,i}(r\cos\theta)^{i_1}(r\sin\theta)^{i_2}x_3^{i_3}\cdots x_n^{i_n},\quad s=3,\ldots,n,
\end{split}
\end{equation}
where the sums go over $i_1+\cdots+i_n=m$. Since $b\neq0$, one can easily verify that in a suitable small neighborhood of $(r,x_3,\ldots,x_n)=(0,0,\ldots,0)$ with $r>0$ we always have $d \theta/dt\neq0$. Then taking $\theta$ as the new independent variable, in a neighborhood of $(r,x_3,\ldots,x_n)=(0,0,\ldots,0)$, one sees that system \eqref{equA3-2} becomes
\begin{equation}\label{equA3-3}
\begin{split}
\frac{dr}{d\theta}=\frac{dr/dt}{d\theta/dt},\quad \frac{dx_s}{d\theta}=\frac{dx_s/dt}{d\theta/dt}, \quad s=3,\ldots,n,
\end{split}
\end{equation}
where $dr/dt$, $d\theta/dt$ and $dx_s/dt$ take the expressions in \eqref{equA3-2}. To apply the averaging method, we rescale the variables by setting
\begin{equation}\label{equA3-4}
\begin{split}
(r,x_3,\ldots,x_n)=\left(\sqrt[m-1]{\varepsilon}R,\sqrt[m-1]{\varepsilon}X_3,\ldots,\sqrt[m-1]{\varepsilon}X_n\right),\quad 0<\varepsilon\ll1.
\end{split}
\end{equation}
Then system \eqref{equA3-3} becomes
\begin{equation}\label{equA3-5}
\begin{split}
\frac{dR}{d\theta}&=\frac{R\cdot\sum_{i=1}^k\varepsilon^iS_{i,1}(\theta,R,X_3,\ldots,X_n)}{bR+\sum_{i=1}^k\varepsilon^iS_{i,2}(\theta,R,X_3,\ldots,X_n)},\\
\frac{dX_s}{d\theta}&=\frac{R\cdot\sum_{i=1}^k\varepsilon^iS_{i,s}(\theta,R,X_3,\ldots,X_n)}{bR+\sum_{i=1}^k\varepsilon^iS_{i,2}(\theta,R,X_3,\ldots,X_n)},
\quad s=3,\ldots,n,
\end{split}
\end{equation}
where $S_{i,1}$, $S_{i,2}$ and $S_{i,s}$ for $s=3,\ldots,n$ are expressions in \eqref{equ3-0-2}. By carrying Taylor expansion of expressions in \eqref{equA3-5} with respect to the variable $\varepsilon$ around $\varepsilon=0$, one obtains the functions $F_{i,j}(\theta,R,X_3,\ldots,X_n)$, $i=1,2,\ldots,k$, $j=1,3,\ldots,n$, in \eqref{equ3-0-1}. This ends the proof of Lemma \ref{lem-main-1}.
\end{proof}

Denote the set of all the parameters other than $\varepsilon$ appearing in the perturbed system \eqref{equ-3} by $\boldsymbol{p}$, and $\boldsymbol{\eta}=(R,X_3,\ldots,X_n)\in\Omega$, where $\Omega$ is a suitable neighborhood of the origin. Let $\mathcal{R}[\boldsymbol{\eta}]$ be the real polynomial ring $\mathbb{R}(\boldsymbol{p})[\boldsymbol{\eta}]$. For system \eqref{equ3-0-1}, we have the following result on the $i$th-order averaged function $\boldsymbol{f}_i(\boldsymbol{\eta})$.

\begin{theorem}\label{main-th-0}
Let $\boldsymbol{f}_i(\boldsymbol{\eta})=(f_{i,1}(\boldsymbol{\eta}),f_{i,3}(\boldsymbol{\eta}),\ldots,f_{i,n}(\boldsymbol{\eta}))$ be the $i$th-order averaged function associated to system \eqref{equ3-0-1}, with $i\in\{1,2,\ldots,k\}$. Then for each $j\in\{1,3,\ldots,n\}$, there exists a smallest nonnegative integer $\mu_{i,j}\leq i-1$ such that $R^{\mu_{i,j}}f_{i,j}(\boldsymbol{\eta})=\bar{f}_{i,j}(\boldsymbol{\eta})\in\mathcal{R}[\boldsymbol{\eta}]$. Moreover, for each $j$, $\bar{f}_{i,j}(\boldsymbol{\eta})$ is a polynomial in $\mathcal{R}[\boldsymbol{\eta}]$ of total degree no more than $im$.
\end{theorem}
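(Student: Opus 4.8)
The plan is to expand the rational right-hand sides of \eqref{equ3-0-1} as power series in $\varepsilon$ and then propagate the resulting structural data through the recurrence \eqref{equ2-6} by induction on $i$. First I would write
\[
\frac{dR}{d\theta}=\frac{1}{b}\Big(\sum_{p\ge1}\varepsilon^pS_{p,1}\Big)\sum_{\ell\ge0}(-1)^\ell\Big(\frac{1}{bR}\sum_{q\ge1}\varepsilon^qS_{q,2}\Big)^\ell,
\]
and likewise for each $dX_s/d\theta$ with $S_{p,1}$ replaced by $S_{p,s}$, so that
\[
F_{i,1}=\frac{1}{b}\sum_{\ell=0}^{i-1}\frac{(-1)^\ell}{(bR)^\ell}\sum_{\substack{p+q_1+\cdots+q_\ell=i\\ p,q_t\ge1}}S_{p,1}S_{q_1,2}\cdots S_{q_\ell,2}.
\]
By \eqref{equ3-0-2} every $S_{i,j}$ is a polynomial in $\boldsymbol\eta=(R,X_3,\ldots,X_n)$ of total degree at most $m$ (the linear terms $a_iR$, $c_{s,i}X_s$ have degree $1$ and the remaining terms are homogeneous of degree $m$), so the only denominators produced are powers of $R$, the largest being $R^{\ell}$ with $\ell\le i-1$, and the summand carrying denominator $R^{\ell}$ contributes, after clearing, a numerator of degree at most $(i-1-\ell)+m(\ell+1)$, which is maximized at $\ell=i-1$ and equals $im$. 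Writing each expression in lowest terms as $\bar g/R^{P}$ with $\bar g\in\mathcal R[\boldsymbol\eta]$, this shows $F_{i,j}$ has denominator power $P\le i-1$ and cleared-numerator degree $N:=\deg\bar g\le im$.

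The heart of the argument is that these two integers $P$ and $N$ obey simple bookkeeping rules. Under multiplication both are subadditive, $N(\phi\psi)\le N(\phi)+N(\psi)$ and $P(\phi\psi)\le P(\phi)+P(\psi)$. Under one partial derivative, $\partial/\partial X_s$ sends $N\mapsto N-1$ and leaves $P$ unchanged, whereas $\partial/\partial R$ sends $N\mapsto N'\le N$ and $P\mapsto P'\le P+1$; in either case $N$ never increases while $P$ rises by at most one per differentiation. Consequently an $m'$-fold Fr\'echet derivative $\partial^{m'}\boldsymbol F_{i-\ell}$ has $N\le(i-\ell)m$ and $P\le(i-\ell-1)+m'$, where $m'$ denotes the summation index of \eqref{equ2-6}.

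The induction hypothesis is that for every order $i'<i$ and every component, $\boldsymbol y_{i'}$ has $P\le i'-1$ and $N\le i'm$; the base case $i'=1$ is immediate since $F_{1,j}=S_{1,j}/b$. For the step I would read off \eqref{equ2-6}: the term $\boldsymbol F_i$ already obeys the bounds, and a typical summand $\partial^{m'}\boldsymbol F_{i-\ell}\,B_{\ell,m'}(\boldsymbol y_1,\ldots,\boldsymbol y_{\ell-m'+1})$ involves, via \eqref{equ2-5}, products $\prod_j\boldsymbol y_j^{\,b_j}$ with $\sum_j jb_j=\ell$ and $\sum_j b_j=m'$. By subadditivity and the hypothesis such a product has $N\le\sum_jb_j(jm)=m\ell$ and $P\le\sum_jb_j(j-1)=\ell-m'$; multiplying by $\partial^{m'}\boldsymbol F_{i-\ell}$ then gives total $N\le(i-\ell)m+m\ell=im$ and total $P\le[(i-\ell-1)+m']+[\ell-m']=i-1$. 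Since the integration in $\theta$ acts only on the trigonometric coefficients and leaves the $\boldsymbol\eta$-dependence intact, $\boldsymbol y_i$, and hence $\boldsymbol f_i=\boldsymbol y_i(T,\cdot)/i!$ by \eqref{equ2-3}, inherits $P\le i-1$ and $N\le im$; this is exactly the assertion, the smallest admissible exponent $\mu_{i,j}$ being the reduced denominator power, so $\mu_{i,j}\le i-1$ and $\deg\bar f_{i,j}\le im$.

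The step I expect to be the main obstacle is controlling the denominator power through the multilinear derivatives $\partial^{m'}\boldsymbol F_{i-\ell}$: differentiating $m'$ times can raise the power of $1/R$ by as much as $m'$, which on its own would violate $P\le i-1$. What makes the estimate close is the exact compensation provided by the Bell polynomial $B_{\ell,m'}$, whose monomials are products of precisely $m'$ factors $\boldsymbol y_j$ with indices summing to $\ell$, so that they contribute denominator power $\sum_j(j-1)b_j=\ell-m'$ and this $-m'$ cancels the $+m'$ from differentiation; I would therefore be careful to use the two identities $\sum_j jb_j=\ell$ and $\sum_j b_j=m'$ in tandem. A minor remaining point is a routine check that the $\theta$-integrals of the trigonometric coefficients stay in $\mathbb R$, so that the cleared numerators $\bar f_{i,j}$ indeed lie in $\mathcal R[\boldsymbol\eta]=\mathbb R(\boldsymbol p)[\boldsymbol\eta]$.
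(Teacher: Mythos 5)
Your proposal is correct and follows essentially the same route as the paper: the geometric-series expansion of the denominator $bR+\sum_i\varepsilon^iS_{i,2}$ to obtain $F_{i,j}=\bar F_{i,j}/R^{i-1}$ with $\deg\bar F_{i,j}\le im$ (the paper's Claim 1), a bookkeeping lemma on how Fr\'echet derivatives affect the $R$-denominator power and the cleared-numerator degree (the paper's Lemma \ref{lem-B-1}), and an induction on $i$ in which the identities $\sum_jb_j=L$ and $\sum_jjb_j=\ell$ produce exactly the cancellations $P\le(i-\ell-1)+L+(\ell-L)=i-1$ and $N\le(i-\ell)m+\ell m=im$. The only cosmetic difference is that you run the induction through the Bell-polynomial form \eqref{equ2-6} rather than the raw recurrence \eqref{equ2-4}, which changes nothing of substance.
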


The proof of Theorem \ref{main-th-0} will be done by induction over $i$ and is given in Section \ref{sec-B}. The main idea behind the proof of Theorem \ref{main-th-0} which makes the induction procedure work is that we can derive the form of the function $F_{i,j}$ in \eqref{equ3-0-1} (see \eqref{equB-3}) and that the resulting denominator of the $L$-multilinear map in \eqref{equ2-4} contains only one variable. Note that the main work here is to study the maximum number of common real zeros of the obtained averaged functions. Since the transformation from the Cartesian coordinates $(x_1,x_2,\ldots,x_n)$ to the cylindrical coordinates $(r,\theta,x_3,\ldots,x_n)$ is not a diffeomorphism at $r=0$, we need to study the zeros of the averaged functions $\boldsymbol{f}_i(\boldsymbol{\eta})$ with $R>0$.


The following result is an immediate consequence of Theorem \ref{main-th-0}, which provides an upper bound for the number $H_k(n,m)$.

\begin{corollary}\label{main-co-1}
$H_k(n,m)\leq(km)^{n-1}$.
\end{corollary}
\begin{proof}
Let $\boldsymbol{f}_1(\boldsymbol{\eta})=\boldsymbol{f}_2(\boldsymbol{\eta})=\cdots=\boldsymbol{f}_{k-1}(\boldsymbol{\eta})\equiv0$.
By Theorem \ref{main-th-0}, there exists a smallest nonnegative integer $\mu_{k,j}\leq k-1$ such that $R^{\mu_{k,j}}f_{k,j}(\boldsymbol{\eta})=\bar{f}_{k,j}(\boldsymbol{\eta})\in\mathcal{R}[\boldsymbol{\eta}]$ for each $j\in\{1,3,\ldots,n\}$. Here $\bar{f}_{k,j}(\boldsymbol{\eta})$ is a polynomial in $\mathcal{R}[\boldsymbol{\eta}]$ of total degree at most $km$. By Bezout's theorem \cite{IRS74}, the maximum number of common isolated zeros that the algebraic system $\{\bar{f}_{k,1}(\boldsymbol{\eta}),\bar{f}_{k,3}(\boldsymbol{\eta}),\ldots,\bar{f}_{k,n}(\boldsymbol{\eta})\}$ can have is $(km)^{n-1}$. From Theorem \ref{averaging-thm}, we know that system \eqref{equ-3} up to the $k$th-order averaging has no more than $(km)^{n-1}$ limit cycles bifurcating from the origin at $\varepsilon=0$.
\end{proof}


Up to now and as far as we know, Corollary \ref{main-co-1} is the only result about zero-Hopf bifurcations in high-dimensional polynomial differential systems of arbitrary degree, obtained by using the averaging method of arbitrary order. To our knowledge, the result is the first one showing that the bound on the number of bifurcated limit cycles in a zero-Hopf bifurcation is a power function in the order $k$ and the degree $m$ of the systems. Some concrete applications of the averaging method to differential systems of lower degrees (or lower dimensions) can be seen in \cite{JLXZ09,BLV18,LMB09,LV11,BBM17,DBM21}, which shows that the upper bound given in Corollary \ref{main-co-1} is generally not reachable.

\subsection{Tight Bounds on the Number of Limit Cycles for Specialized Systems}\label{sub-sect3-2}
On the number of limit cycles of system \eqref{equ-3}, we have the following result obtained by using the first-order averaging method. Its proof can be found in Section \ref{sect-main-1}.

\begin{theorem}\label{main-th-1}
$H_1(n,m)\leq(m-1)\cdot m^{n-2}$.

\end{theorem}

Notice that, the upper bound in Theorem \ref{main-th-1} is smaller than the bound given in Corollary \ref{main-co-1} when $k=1$, because we can derive explicit expressions of the first-order averaged functions for generic $n$ and $m$ (see Section \ref{sect-main-1}). In general, the upper bound in Theorem \ref{main-th-1} is not optimal and can be improved (see Theorem \ref{main-th-2}).

We recall from \cite{JLXZ09,BLV18} that system \eqref{equ-3} can have $\ell$ limit cycles bifurcating from the origin with $\ell\in\{0,1,\ldots,2^{n-3}\}$ for $m=2$, $\ell\in\{0,1,\ldots,3^{n-2}\}$ for $m=3$, $\ell\leq6^{n-2}$ for $m=4$ (Theorem \ref{main-th-1} provides a sharper upper bound $3\cdot4^{n-2}$ when $n\geq5$), and $\ell\leq4\cdot5^{n-2}$ for $m=5$. The authors of \cite{PZ2009} provided a lower bound for $H_1(n,m)$ with general $m\geq2$, showing that either $H_1(n,m)\geq m^{n-2}/2$ when $m$ is even, or $H_1(n,m)\geq m^{n-2}$ when $m$ is odd.

The following corollary follows directly from Theorem \ref{main-th-1} and the results presented in \cite{JLXZ09,PZ2009,BLV18} and it gives lower and upper bounds for the number $H_1(n,m)$.

\begin{corollary}\label{cor-th-1}
The following statements hold for $H_1(n,m)$.
\begin{enumerate}
  \item[\emph{(a)}] $H_1(n,2)=2^{n-3}$.
  \item[\emph{(b)}] $H_1(n,3)=3^{n-2}$.
  \item[\emph{(c)}] $\min\{6^{n-2},3\cdot4^{n-2}\}\geq H_1(n,4)\geq2\cdot4^{n-3}$.
  \item[\emph{(d)}] $(m-1)\cdot m^{n-2}\geq H_1(n,m)\geq\left\{
\begin{array}{ll}
&m^{n-2}/2, \quad \text{when}\,\, m\geq5\,\, \text{is even}, \\
&m^{n-2}, \quad \text{when}\,\, m\geq5\,\, \text{is odd}.
\end{array}
\right.$
\end{enumerate}

\end{corollary}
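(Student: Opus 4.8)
The plan is to assemble the four items purely from Theorem \ref{main-th-1} together with the quantitative bifurcation counts already recorded in \cite{JLXZ09,PZ2009,BLV18}; since each statement is a comparison of explicit integers, no fresh bifurcation analysis is required, only a careful matching of the available lower and upper estimates with the correct case distinctions.

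First I would dispose of the universal upper bound. Theorem \ref{main-th-1} gives $H_1(n,m)\leq(m-1)\cdot m^{n-2}$ for every $n\geq3$ and $m\geq2$, which already supplies the right-hand inequalities of items (c) and (d) (the case $m=4$ yielding the candidate $3\cdot4^{n-2}$). For item (c) I would additionally invoke the estimate $H_1(n,4)\leq6^{n-2}$ from \cite{BLV18}; taking the smaller of the two bounds produces $H_1(n,4)\leq\min\{6^{n-2},3\cdot4^{n-2}\}$. I would note that $6^{n-2}$ is the smaller quantity for $n\leq4$ while $3\cdot4^{n-2}$ is smaller for $n\geq5$ (e.g. $6^{2}=36<48=3\cdot4^{2}$ but $6^{3}=216>192=3\cdot4^{3}$), which is exactly what the $\min$-expression encodes.

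Next I would supply the lower bounds. For items (c) and (d) these follow directly from the constructions of Pi and Zhang \cite{PZ2009}, which realize at least $m^{n-2}/2$ limit cycles when $m$ is even and at least $m^{n-2}$ when $m$ is odd; specializing to $m=4$ gives $H_1(n,4)\geq 4^{n-2}/2=2\cdot4^{n-3}$, and I would record this elementary identity ($4^{n-2}/2=2^{2n-5}=2\cdot4^{n-3}$) to reconcile the two forms of the exponent. For the exact equalities in items (a) and (b) I would rely on the sharp degree-specific analyses rather than on the general theorem: \cite{JLXZ09} shows that for $m=2$ precisely $2^{n-3}$ limit cycles bifurcate, and \cite{BLV18} shows that for $m=3$ precisely $3^{n-2}$ bifurcate, so in each case the realized count meets the maximal one and forces $H_1(n,2)=2^{n-3}$ and $H_1(n,3)=3^{n-2}$.

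The only point deserving care, and the sole subtlety in an otherwise bookkeeping argument, is that the general upper bound of Theorem \ref{main-th-1} is strictly larger than the true maxima in the small-degree cases ($2^{n-2}>2^{n-3}$ when $m=2$, and $2\cdot3^{n-2}>3^{n-2}$ when $m=3$). Consequently the equalities in (a) and (b) cannot be read off from Theorem \ref{main-th-1} alone and must instead be pinned down by the tighter, degree-specific upper bounds of \cite{JLXZ09,BLV18}. Once this distinction is respected, each of the four items follows by juxtaposing the matching lower and upper estimates, completing the proof.
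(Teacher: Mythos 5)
Your proposal is correct and follows essentially the same route as the paper, which presents this corollary as an immediate juxtaposition of the upper bound of Theorem \ref{main-th-1} with the degree-specific results of \cite{JLXZ09} (exact value $2^{n-3}$ for $m=2$), \cite{BLV18} (exact value $3^{n-2}$ for $m=3$ and the bound $6^{n-2}$ for $m=4$), and the lower bounds $m^{n-2}/2$ (for $m$ even) and $m^{n-2}$ (for $m$ odd) of \cite{PZ2009}. Your explicit checks — the identity $4^{n-2}/2=2\cdot4^{n-3}$, the crossover of $6^{n-2}$ versus $3\cdot4^{n-2}$ at $n=5$, and the observation that the equalities in (a) and (b) cannot follow from Theorem \ref{main-th-1} alone — are all accurate and consistent with the paper's remarks.
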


The following result gives the number $H_1(n,m)$ for three-dimensional differential systems of the form \eqref{equ-3}.
\begin{theorem}\label{H-th-1}
For three-dimensional differential systems of the form \eqref{equ-3}, $H_1(3,m)=m/2$ for $m$ even, and $H_1(3,m)=m$ for $m$ odd.
\end{theorem}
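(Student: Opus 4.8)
The plan is to specialize the first-order averaged functions to $n=3$, exploit a parity phenomenon in $m$ that governs the shape of $\boldsymbol f_1$, reduce the resulting two-variable zero count to a one-variable one, and then match the upper bound against the lower bounds already recorded in Corollary~\ref{cor-th-1}. First I would write $\boldsymbol f_1$ out explicitly. For $k=1$ a short computation from \eqref{equ3-0-1} gives $F_{1,1}=S_{1,1}/b$ and $F_{1,3}=S_{1,3}/b$, so with $w:=X_3$ the two components of $\boldsymbol f_1$ are obtained by integrating \eqref{equ3-0-2} over $\theta\in[0,2\pi]$. Since $\int_0^{2\pi}\cos^a\theta\sin^b\theta\,d\theta\neq0$ only when $a$ and $b$ are both even, the surviving monomials of $f_{1,1}$ are the term $a_1R$ together with $R^dw^{m-d}$ for $d$ \emph{odd}, while those of $f_{1,3}$ are $c_{3,1}w$ together with $R^dw^{m-d}$ for $d$ \emph{even}. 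Hence $f_{1,1}=R\,P(R^2,w)$ for a polynomial $P$, and since we seek zeros with $R>0$ we may replace $f_{1,1}=0$ by $P=0$. Writing $u:=R^2$ and $v:=w^2$, the decisive structural fact is that $f_{1,3}$ has $w$ as a factor exactly when $m$ is odd (then every exponent $m-d$ with $d$ even is odd), whereas for $m$ even it contains the pure term $\propto R^m=u^{m/2}$ and has no such factor.

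For $m$ odd one has $P=A+H_P(u,v)$ and $f_{1,3}=w\,\bigl(C+H_Q(u,v)\bigr)$, where $H_P,H_Q$ are homogeneous of degree $g:=(m-1)/2$ in $(u,v)$ and $A,C$ are the nonzero constants coming from $a_1,c_{3,1}$. On the stratum $w=0$ the equation $f_{1,3}=0$ holds identically, so one only needs $P(u,0)=A+(\mathrm{const})\,u^{g}=0$, which has at most one root $u>0$. For $w\neq0$ the system reads $H_P=-A$, $H_Q=-C$; cancelling the common homogeneity by taking the ratio yields the univariate equation $C\,H_P(s,1)-A\,H_Q(s,1)=0$ in $s:=u/v$, of degree at most $g$. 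Each admissible root $s>0$ determines a unique $v>0$ and $u=sv>0$, but \emph{two} opposite values $w=\pm\sqrt v$, because $P$ and $C+H_Q$ depend on $w$ only through $v$. This produces at most $2g=m-1$ zeros with $w\neq0$ plus at most one with $w=0$, i.e. at most $m$ zeros with $R>0$, whence at most $m$ bifurcating limit cycles.

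For $m$ even the analogous reduction gives $P=A+w\,\rho(u,v)$ and $f_{1,3}=Cw+\sigma(u,v)$, with $\rho,\sigma$ homogeneous of degrees $m/2-1$ and $m/2$; now the relation $w=-A/\rho=-\sigma/C$ fixes the sign of $w$, so the doubling of the odd case disappears. The two consequences $\rho\sigma=AC$ and $v\rho^{2}=A^{2}$ have ratio $C\,v\,\rho-A\,\sigma=0$, a homogeneous equation of degree $m/2$ in $(u,v)$, hence a univariate polynomial of degree at most $m/2$ in $s=u/v$; each admissible root yields exactly one triple $(u,v,w)$ with $u>0$, since $m-1$ is odd and $v^{m-1}=A^{2}/\rho(s,1)^{2}$ has a unique positive root. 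Thus at most $m/2$ limit cycles bifurcate. Combining the upper bounds $H_1(3,m)\leq m$ ($m$ odd) and $H_1(3,m)\leq m/2$ ($m$ even) with the lower bounds $H_1(3,m)\geq m$ and $H_1(3,m)\geq m/2$ recorded in Corollary~\ref{cor-th-1}(b)--(d) gives the stated equalities; achievability, and the fact that the zeros can be taken simple so that Theorem~\ref{averaging-thm} applies, is furnished by the cited constructions.

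I expect the main obstacle to lie in the bookkeeping of Step~1 — establishing the precise monomial supports of $f_{1,1}$ and $f_{1,3}$ and, above all, the $w$-factor dichotomy that forces the clean split between $m$ even and $m$ odd — and, in the counting steps, in ruling out degenerate parameter choices. Specifically, one must argue that vanishing leading coefficients, an identically zero reduced univariate polynomial (which would make $H_P=-A$, $H_Q=-C$ define coincident curves), or nonisolated components along $w=0$ do not allow the number of \emph{isolated} zeros with $R>0$ to exceed the degree bound; handling these boundary cases carefully is what turns the generic count into a genuine upper bound for $H_1(3,m)$.
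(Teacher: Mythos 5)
Your argument is correct and follows essentially the same route as the paper's proof: both exploit the parity structure of the first-order averaged functions, combine the two equations to obtain a univariate polynomial of degree $m/2$ (resp.\ $(m-1)/2$) in the ratio of the variables, and then count how many zeros with $R>0$ each root contributes, with the lower bound supplied by \cite{PZ2009}. The only differences are cosmetic: your substitution $u=R^2$, $v=X_3^2$ versus the paper's direct use of the ratio $X_3/R$, and your explicit treatment of the odd case, which the paper defers to the proof of Theorem 23 in \cite{PZ2009}.
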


\begin{proof}
The proof can be done by using the ideas from the proof of Theorem 23 in \cite{PZ2009}. Here we only present the proof for the case $m\geq2$ even; the case for $m$ odd can be proved similarly, see \cite{PZ2009} for more detailed arguments.

Noting that the coefficients of the function $\bar{f}_{1,1}(\boldsymbol{\eta})$ in \eqref{equ3-9} are arbitrary, one can simplify the function by writing it in the form
\begin{equation}\label{equ6-1}
\begin{split}
\bar{f}_{1,1}(\boldsymbol{\eta})&=\bar{A}+\bar{A}_1R^{m-2}X_3
+\bar{A}_3R^{m-4}X_3^3+\cdots+\bar{A}_{m-1}X_3^{m-1},
\end{split}
\end{equation}
where $\bar{A}$ and $\bar{A}_i$ are arbitrary nonzero constants. In a similar way, the function $\bar{f}_{1,3}(\boldsymbol{\eta})$ (see \eqref{equ3-10}) can be simplified to
\begin{equation}\label{equ6-2}
\begin{split}
\bar{f}_{1,3}(\boldsymbol{\eta})&=\bar{B}X_3+\bar{B}_0R^{m}
+\bar{B}_2R^{m-2}X_3^2+\cdots+\bar{B}_{m}X_3^{m},
\end{split}
\end{equation}
where $\bar{B}$ and $\bar{B}_i$ are arbitrary nonzero constants. Since $R\neq0$, it follows from \eqref{equ6-1} and \eqref{equ6-2} that
\begin{equation}\label{equ6-3}
\begin{split}
\frac{\bar{B}X_3}{\bar{A}R}&=\frac{\bar{B}_0R^{m}
+\bar{B}_2R^{m-2}X_3^2+\cdots+\bar{B}_{m}X_3^{m}}{R(\bar{A}_1R^{m-2}X_3
+\bar{A}_3R^{m-4}X_3^3+\cdots+\bar{A}_{m-1}X_3^{m-1})}\\
&=\frac{\bar{B}_0
+\bar{B}_2\left(\frac{X_3}{R}\right)^2+\cdots+\bar{B}_{m}\left(\frac{X_3}{R}\right)^m}
{\bar{A}_1\left(\frac{X_3}{R}\right)+\bar{A}_3\left(\frac{X_3}{R}\right)^3
+\cdots+\bar{A}_{m-1}\left(\frac{X_3}{R}\right)^{m-1}}.
\end{split}
\end{equation}
Consequently, we have
\begin{equation}\label{equ6-4}
\begin{split}
\bar{B}_0+\left(\bar{B}_2-\frac{\bar{B}\bar{A}_1}{\bar{A}}\right)\left(\frac{X_3}{R}\right)^2
+\left(\bar{B}_m-\frac{\bar{B}\bar{A}_{m-1}}{\bar{A}}\right)\left(\frac{X_3}{R}\right)^m=0.
\end{split}
\end{equation}
Since the coefficients of the equation \eqref{equ6-4} in $X_3/R$ can be arbitrary real numbers, so for an appropriate choice of the values of the coefficients, the equation \eqref{equ6-4} can have exactly $m/2$ simple nonzero real solutions $(X_3/R)^2$.

Note that the equation \eqref{equ6-1} can be rewritten as
\begin{equation}\label{equ6-5}
\begin{split}
X_3^{m-1}=\frac{-\bar{A}}{\bar{A}_{m-1}+\bar{A}_{m-3}\left(\frac{R}{X_3}\right)^2
+\cdots+\bar{A}_1\left(\frac{R}{X_3}\right)^{m-2}}.
\end{split}
\end{equation}
Since $m$ is even, for each solution $X_3/R$ of \eqref{equ6-4} we obtain a unique
real solution
\begin{equation}\label{equ6-6}
\begin{split}
X_3=\left(\frac{-\bar{A}}{\bar{A}_{m-1}+\bar{A}_{m-3}\left(\frac{R}{X_3}\right)^2
+\cdots+\bar{A}_1\left(\frac{R}{X_3}\right)^{m-2}}\right)^{\frac{1}{m-1}}.
\end{split}
\end{equation}
Hence, the algebraic system $\{\bar{f}_{1,1}(\boldsymbol{\eta}),\bar{f}_{1,3}(\boldsymbol{\eta})\}$ has exactly $m/2$ simple zeros, denoted by $(R_i,X_{3,i})$ with $R_i>0$ for $i=1,2,\ldots,m/2$. This completes the proof.

\end{proof}

\begin{table}[h]
\begin{center}
\caption{Some values of $H_1(n,m)$ for system \eqref{equ-3}.}\label{Tab-1}
\begin{tabular}{|c|c|c||c||c||c||c||c||c|}
  \hline
  \multicolumn{9}{|c|}{$n$}  \\
  \hline \hline
  & & 3 & 4 & 5 & 6 & 7 & 8 & $\cdots$\\
  \hline \hline
  \multirow{8}{*}{$m$}
  & 2 & 1 &  2 & 4 & 8 & 16 & 32 & $\rightarrow$\\
  & 3 & 3 &  9 & 27 & 81 & 243 & 729 &  $\rightarrow$\\
  & 4 & 2 &  &  &  &  &  &  \\
  & 5 & 5 &  &  &  &  &  &  \\
  & 6 & $3^*$ &  &  &  &  &  & \\
  & 7 & $7^*$ &  &  &  &  &  &  \\
  & 8 & 4 &  &  &  &  &  & \\
  & $\vdots$ & $\downarrow$ &  &  &  &  &  & \\
  \hline
\end{tabular}
\end{center}
\end{table}

Theorem \ref{H-th-1} shows that $H_1(n,m)$ equals to its lower bound when $n=3$. We list in Table \ref{Tab-1} the known values of $H_1(n,m)$ for system \eqref{equ-3}. In the three-dimensional case, we succeeded only in finding the values of $H_1(3,6)$ and $H_1(3,7)$ by means of Gr\"obner basis (marked with asterisks in Table \ref{Tab-1}). In a similar way, we once more confirmed the results that were obtained in \cite{BLV18}. In the more difficult case of $m=8$, we found out that the computation of the Gr\"obner basis of the resulting polynomial system in Maple was consuming too much of the CPU. Surely, in near future, the progress in computers will advance the study of the class of systems in question, and far beyond the point we have reached.

In the following, we avoid computing Gr\"obner basis and using the method of \textit{mixed volume} and Bernstein's theorem to predict an upper bound of $H_1(n,m)$ that is more refined than the bound provided in Theorem \ref{main-th-1}. As is well known, Bernstein's bound is at most as high as B\'ezout's bound, which is simply the product of the total degrees, and is usually significantly smaller for systems encountered in real-world applications (especially for sparse polynomial systems).

\begin{theorem}\label{main-th-2}
The number $H_1(n,m)$ is bounded by the mixed volume ${\rm{MV}}(\mathcal{A}_1,\{\boldsymbol{0}_{n-1}\}\cup\mathcal{A}_3,\ldots,\{\boldsymbol{0}_{n-1}\}\cup\mathcal{A}_n)$, where $\mathcal{A}_s$ is the support of the polynomial $\bar{f}_{1,s}(\boldsymbol{\eta})$ (see Section \ref{sect-main-1}), with $s\in\{1,3,\ldots,n\}$. Moreover,
\begin{equation}\label{equmx-1}
\begin{split}
\mathcal{A}_1&=\mathop{\bigcup}\limits_{t=1}^{{m}/{2}}\{({m}/{2}-t,\boldsymbol{e}_{j_1j_2\cdots j_{2t-1}})\}\cup\{\boldsymbol{0}_{n-1}\},\\
\mathcal{A}_s&=\mathop{\bigcup}\limits_{t=1}^{{m}/{2}}\{({m}/{2}-t,\boldsymbol{e}_{j_1j_2\cdots j_{2t}})\}\cup\{\boldsymbol{\bar{e}}_{s}\}
\cup\{(m/2,\boldsymbol{0}_{n-2})\},\quad s=3,\ldots,n
\end{split}
\end{equation}
when $m\geq2$ is even, and
\begin{equation}\label{equmx-2}
\begin{split}
\mathcal{A}_1&=\mathop{\bigcup}\limits_{t=1}^{{(m-1)}/{2}}\{({(m-1)}/{2}-t,\boldsymbol{e}_{j_1j_2\cdots j_{2t}})\}\cup\{\boldsymbol{0}_{n-1}\}\cup\{((m-1)/2,\boldsymbol{0}_{n-2})\},\\
\mathcal{A}_s&=\mathop{\bigcup}\limits_{t=1}^{{(m+1)}/{2}}\{({(m+1)}/{2}-t,\boldsymbol{e}_{j_1j_2\cdots j_{2t-1}})\}\cup\{\boldsymbol{\bar{e}}_{s}\},\quad s=3,\ldots,n
\end{split}
\end{equation}
when $m\geq3$ is odd, where $\boldsymbol{\bar{e}}_{s}\in\mathbb{N}^{n-1}$ is the unit vector with $(s-1)$th entry equal to 1, and $\boldsymbol{e}_{j_1j_2\cdots j_t}\in\mathbb{N}^{n-2}$ has the sum of the $j_1$th, the $j_2$th, $\ldots$, and the $j_t$th entries equal to $t$ and the others equal to 0 (these entries can coincide).
\end{theorem}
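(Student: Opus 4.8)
The plan is to pin down the exact Newton polytopes of the first-order averaged functions and then read off the bound from the Li--Wang theorem (Theorem \ref{LW-thm}). By Theorem \ref{main-th-1} together with Theorem \ref{averaging-thm}, every limit cycle bifurcating from the origin corresponds to a simple zero with $R>0$ of the system $\bar{f}_{1,1}(\boldsymbol{\eta})=\bar{f}_{1,3}(\boldsymbol{\eta})=\cdots=\bar{f}_{1,n}(\boldsymbol{\eta})=0$, a square system of $n-1$ polynomials in the $n-1$ variables $\boldsymbol{\eta}=(R,X_3,\ldots,X_n)$. Since Bernstein's bound never exceeds B\'ezout's and is generically attained, the idea is to compute these supports exactly and bound the number of isolated common zeros in $\mathbb{C}^{\,n-1}$ by a mixed volume, which will be smaller than the $(m-1)m^{n-2}$ of Theorem \ref{main-th-1}.

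First I would make the supports explicit from Lemma \ref{lem-main-1}. At first order one has $F_{1,j}=S_{1,j}/b$, whence $\bar{f}_{1,j}(\boldsymbol{\eta})=\tfrac1b\int_0^{2\pi}S_{1,j}\,d\theta$. The only analytic input is the elementary parity rule that $\int_0^{2\pi}\cos^{a}\theta\,\sin^{b}\theta\,d\theta\neq0$ exactly when $a$ and $b$ are both even. Applied to $S_{1,1}$, whose angular weight is $p_1\cos\theta+p_2\sin\theta$, it forces $i_1+i_2$ to be \emph{odd} in every surviving monomial $R^{\,i_1+i_2}X_3^{i_3}\cdots X_n^{i_n}$; applied to $S_{1,s}$, whose angular weight is $1$, it forces $i_1+i_2$ to be \emph{even}. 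Combined with $i_1+\cdots+i_n=m$, this fixes the $R$-exponent as $m-d_X$ where $d_X=i_3+\cdots+i_n$, with $d_X$ of parity opposite to $m$ in $\bar{f}_{1,1}$ and equal to the parity of $m$ in $\bar{f}_{1,s}$.

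The key simplification is that every surviving $R$-exponent in $\bar{f}_{1,1}$ is odd, hence $\geq1$, so $R\mid\bar{f}_{1,1}$; factoring it out and then substituting $\rho=R^{2}$ — legitimate for locating zeros with $R>0$, since $R\mapsto R^{2}$ is a bijection of $\{R>0\}$ onto $\{\rho>0\}$ — turns $\bar{f}_{1,1}$ into $R\cdot\tilde{g}_1(\rho,X_3,\ldots,X_n)$ and each $\bar{f}_{1,s}$ (whose $R$-exponents are already even) into $\hat{g}_s(\rho,X_3,\ldots,X_n)$. Tracking the $\rho$-exponent $(m-d_X)/2$, respectively $(m-d_X-1)/2$ after the extra factor $R$, and writing $d_X=2t$ or $d_X=2t-1$ according to parity, one recovers precisely the supports $\mathcal{A}_1,\mathcal{A}_s$ of \eqref{equmx-1} for $m$ even and of \eqref{equmx-2} for $m$ odd: the isolated point $\boldsymbol{0}_{n-1}$ in $\mathcal{A}_1$ comes from the linear term $a_1R$, the pure power $(m/2,\boldsymbol{0}_{n-2})$ or $((m-1)/2,\boldsymbol{0}_{n-2})$ from the surviving power of $R$, and $\boldsymbol{\bar{e}}_s$ from the linear term $c_{s,1}X_s$.

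Finally I would invoke Li--Wang. The limit cycles come from simple (hence isolated) zeros with $R>0$ and therefore inject, via $R\mapsto\rho=R^{2}$, into the isolated common zeros in $\mathbb{C}^{\,n-1}$ of $\tilde{g}_1,\hat{g}_3,\ldots,\hat{g}_n$; by Theorem \ref{LW-thm} their number is at most $\mathrm{MV}(\mathcal{A}_1\cup\{\boldsymbol{0}_{n-1}\},\ldots,\mathcal{A}_n\cup\{\boldsymbol{0}_{n-1}\})$. Since $\mathcal{A}_1$ already contains $\boldsymbol{0}_{n-1}$ whereas each $\mathcal{A}_s$ with $s\geq3$ does not, this equals $\mathrm{MV}(\mathcal{A}_1,\{\boldsymbol{0}_{n-1}\}\cup\mathcal{A}_3,\ldots,\{\boldsymbol{0}_{n-1}\}\cup\mathcal{A}_n)$, the asserted bound on $H_1(n,m)$. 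The main obstacle is the exact support bookkeeping under the reduction $\rho=R^{2}$: one must check that no spurious monomials survive the averaging integral, that the displayed index ranges for $t$ are exactly right, and that the even and odd cases for $m$ genuinely produce the two different shapes \eqref{equmx-1} and \eqref{equmx-2}; once the Newton polytopes are pinned down, Theorem \ref{LW-thm} delivers the bound at once.
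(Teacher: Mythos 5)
Your proposal is correct and follows essentially the same route as the paper: substitute $\rho=R^{2}$, read off the supports of $\bar{f}_{1,1},\bar{f}_{1,3},\ldots,\bar{f}_{1,n}$ from the explicit first-order averaged functions, and apply the Li--Wang extension of Bernstein's theorem (Theorem \ref{LW-thm}), noting that $\boldsymbol{0}_{n-1}\in\mathcal{A}_1$ already. The only difference is presentational: you re-derive the parity selection rule for $\int_0^{2\pi}\cos^a\theta\sin^b\theta\,d\theta$, whereas the paper simply cites the computed formulas \eqref{equ3-9} and \eqref{equ3-10} from Section \ref{sect-main-1}, where that same parity argument is carried out.
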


\begin{proof}
We want to bound the number of common isolated zeros of the algebraic system $\{\bar{f}_{1,1}(\boldsymbol{\eta}),\bar{f}_{1,3}(\boldsymbol{\eta})$, $\ldots,\bar{f}_{1,n}(\boldsymbol{\eta})\}$ in $\mathbb{C}^{n-1}$. It follows from Theorem \ref{LW-thm} that $H_1(n,m)$ is bounded by the mixed volume ${\rm{MV}}(\mathcal{A}_1,\{\boldsymbol{0}_{n-1}\}\cup\mathcal{A}_3,\ldots,\{\boldsymbol{0}_{n-1}\}\cup\mathcal{A}_n)$. Let $\rho=R^2$ be a new variable in the polynomials $\bar{f}_{1,j}(\boldsymbol{\eta})$ for $j=1,3,\ldots,n$. Then when $m$ is even, the support $\mathcal{A}_s$ of $\bar{f}_{1,s}(\boldsymbol{\eta})$ in the variables $(\rho,X_3,\ldots,X_n)$ can be derived from the equations \eqref{equ3-9} and \eqref{equ3-10} respectively for $s=1,3,\ldots,n$. This completes the proof of formula \eqref{equmx-1}. The discussions for \eqref{equmx-2} are closely similar, so we omit them.
\end{proof}

\begin{remark}\label{rem-th-2}
Recall that the set of homogeneous polynomials in $n$ variables of degree $d$ can be associated with a vector space of dimension $\binom{n+d-1}{d}$. By Theorem \ref{main-th-2}, we have
\begin{equation}\label{equmx-3}
\begin{split}
|\mathcal{A}_1|=1+\sum_{t=1}^{m/2}\binom{n+2t-4}{2t-1},\quad
|\mathcal{A}_s|=2+\sum_{t=1}^{m/2}\binom{n+2t-3}{2t},\quad s=3,\ldots,n\nonumber
\end{split}
\end{equation}
when $m$ is even, and
\begin{equation}\label{equmx-4}
\begin{split}
|\mathcal{A}_1|=2+\sum_{t=1}^{(m-1)/2}\binom{n+2t-3}{2t},\quad
|\mathcal{A}_s|=1+\sum_{t=1}^{(m+1)/2}\binom{n+2t-4}{2t-1},\quad s=3,\ldots,n\nonumber
\end{split}
\end{equation}
when $m$ is odd.
\end{remark}

In Table \ref{Tab-2}, we present some values of the BKK bound given in Theorem \ref{main-th-2}. The results on the mixed volume were computed by using Emiris and Canny's algorithm \cite{EC95}, for which the software is available at https://github.com/iemiris/MixedVolume-SparseResultants. There are two other software packages, {\sf MixedVol-2.0} \cite{LeeL11} and {\sf PHCpack} in Macaulay2 \cite{GPV13}, which can be used for computing mixed volumes of polynomial systems.

\begin{table}[h]
\begin{center}
\caption{Some BKK bounds of $H_1(n,m)$.}\label{Tab-2}
\begin{tabular}{|c|c|c||c||c||c||c||c||c|}
  \hline
  \multicolumn{9}{|c|}{$n$}  \\
  \hline \hline
  & & 3 & 4 & 5 & 6 & 7 & 8 & $\cdots$\\
  \hline \hline
  \multirow{8}{*}{$m$}
  & 2 & {\color{red}{1}} &  {\color{red}{2}} & {\color{red}{4}} & {\color{red}{8}} & {\color{red}{16}} & {\color{red}{32}} &  $\rightarrow$\\
  & 3 & {\color{red}{3}} &  {\color{red}{9}} & {\color{red}{27}} & {\color{red}{81}} & {\color{red}{243}} & {\color{red}{729}} &  $\rightarrow$\\
  & 4 & 6 &  24 & 96 & 384 & 1536 & - &  \\
  & 5 & 10 & 50 & 250 & 1250 & - & - &  \\
  & 6 & 15 & 90 & 540 &  - & - & - &  \\
  & 7 & 21 & 147 & 1029 & - & - & - &  \\
  & 8 & 28 & 224 & 1792 & - & - & - &  \\
  & $\vdots$ &  &  &  &  &  &  & $\ddots$\\
  \hline
\end{tabular}
\end{center}
\end{table}

The mixed volume program we used may report an error when $n$ or $m$ becomes bigger and the resulting matrix for Linear Programming (LP) is larger than expected. Through observation we obtain the following corollary of Theorem \ref{main-th-2} (the numbers in red are the exact values of $H_1(n,m)$).

\begin{corollary}\label{BKK-cor}
The following equalities hold for $m=2$ and $m=3$:
\begin{equation}\label{equmx-5}
{\rm{MV}}(\mathcal{A}_1,\{\boldsymbol{0}_{n-1}\}\cup\mathcal{A}_3,\ldots,\{\boldsymbol{0}_{n-1}\}\cup\mathcal{A}_n)=
\left\{
\begin{array}{ll}
&H_1(n,2)=2^{n-3},\\
&H_1(n,3)=3^{n-2}.
\end{array}
\right.
\end{equation}

\end{corollary}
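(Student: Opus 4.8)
The plan is to prove both equalities by sandwiching the mixed volume ${\rm MV}$ between the known exact value of $H_1(n,m)$ and a matching upper bound obtained by a direct mixed-volume computation. For the lower direction, recall that Theorem \ref{main-th-2} gives $H_1(n,m)\le {\rm MV}(\mathcal{A}_1,\{\boldsymbol{0}_{n-1}\}\cup\mathcal{A}_3,\ldots,\{\boldsymbol{0}_{n-1}\}\cup\mathcal{A}_n)$, while Corollary \ref{cor-th-1} (drawing on \cite{JLXZ09,PZ2009,BLV18}) supplies $H_1(n,2)=2^{n-3}$ and $H_1(n,3)=3^{n-2}$; hence $2^{n-3}\le {\rm MV}$ for $m=2$ and $3^{n-2}\le {\rm MV}$ for $m=3$. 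It therefore remains to show the reverse inequalities by evaluating the mixed volume of the explicit supports \eqref{equmx-1} and \eqref{equmx-2}. Throughout I would work, as in the proof of Theorem \ref{main-th-2}, with the variable $\rho=R^2$, so that the supports live in $\mathbb{N}^{n-1}$ with coordinates $(\rho,X_3,\ldots,X_n)$.

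For $m=2$, writing out \eqref{equmx-1} gives a generic system in which $\bar{f}_{1,1}$ is an affine-linear form in $X_3,\ldots,X_n$ carrying a constant term and no $\rho$, while each $\bar{f}_{1,s}$, $s=3,\ldots,n$, has the shape $e_s\rho+d_sX_s+\sum_{3\le i\le j\le n}g_{s,ij}X_iX_j$. The key structural feature is that $\rho$ occurs linearly, with a generically nonzero coefficient, and only in $\bar{f}_{1,3},\ldots,\bar{f}_{1,n}$. I would eliminate $\rho$ using $\bar{f}_{1,3}$ and substitute into $\bar{f}_{1,4},\ldots,\bar{f}_{1,n}$, producing $n-3$ generic quadratics with no constant term together with the generic linear form $\bar{f}_{1,1}$, all in the $n-2$ variables $X_3,\ldots,X_n$. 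Because eliminating a variable that appears linearly (with nonvanishing leading coefficient) preserves the generic number of isolated affine zeros, and because the Li--Wang mixed volume of Theorem \ref{LW-thm} is attained for generic coefficients (the genericity analysis of \cite{Roj99,Roj03}), the original mixed volume equals that of the reduced system. After adjoining $\{\boldsymbol{0}\}$ the reduced Newton polytopes are the standard simplex $\Delta$ (degree $1$) and $n-3$ copies of $2\Delta$ (degree $2$), whose mixed volume is the product of the degrees $1\cdot 2^{\,n-3}=2^{\,n-3}$. This yields ${\rm MV}=2^{n-3}$.

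For $m=3$, \eqref{equmx-2} produces $\bar{f}_{1,1}=\mathrm{const}+e\rho+\sum_{i\le j}g_{ij}X_iX_j$ (degree $2$, with $\rho$ linear) and $\bar{f}_{1,s}=d_sX_s+\sum_i h_{s,i}\rho X_i+(\text{cubic in }X)$ (degree $3$, again $\rho$ linear). Here I would eliminate $\rho$ through $\bar{f}_{1,1}$ and substitute into $\bar{f}_{1,3},\ldots,\bar{f}_{1,n}$; since $\rho$ is replaced by a quadratic in $X$, every term $\rho X_i$ becomes a cubic, so each substituted equation is a cubic in $X_3,\ldots,X_n$ containing only linear and cubic monomials. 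After adjoining $\{\boldsymbol{0}\}$ each of these $n-2$ Newton polytopes equals the full simplex $3\Delta$, and the mixed volume of $n-2$ copies of $3\Delta$ is $3^{\,n-2}$. Combined with the elimination invariance this gives ${\rm MV}=3^{n-2}$. As an independent check for small $n$, one can verify these values directly from the inclusion-exclusion formula for ${\rm MV}$; for $m=2$ all the polytopes $\{\boldsymbol{0}\}\cup\mathcal{A}_s$ coincide with a single pyramid over $2\Delta$, which makes the two-body mixed-volume expansion completely explicit.

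The hard part is the passage from the explicit supports to the numerical value of the mixed volume, and two points require care. First, one must justify that the Li--Wang bound is genuinely realized by the generic affine root count rather than merely bounding it, which is precisely where the genericity hypotheses of \cite{Roj99,Roj03} enter; the coefficients of the averaged functions are free parameters (cf. the proof of Theorem \ref{H-th-1}), so treating them as generic is legitimate. Second, one must check that eliminating $\rho$ introduces no spurious zeros and loses none: the leading coefficient of $\rho$ is generically nonzero and $\rho$ is recovered uniquely from the remaining variables, so the affine counts match. A subtlety special to $m=3$ is that the reduced cubics have no constant term, so $X=\boldsymbol{0}$ is a common zero; for generic coefficients its Jacobian (the diagonal of the linear parts) is nonsingular, so it is a simple isolated zero and is correctly included in the affine total $3^{n-2}$, consistent with using the affine Li--Wang bound rather than the torus bound of Theorem \ref{Ber-thm}.
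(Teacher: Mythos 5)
Your overall architecture is sound and genuinely different from the paper's. The paper disposes of the corollary in two sentences by a squeeze: it cites \cite{JLXZ09} and \cite{BLV18} for the fact that the maximum \emph{real} root count of the averaged system attains the relevant B\'ezout-type bound, and since $H_1(n,m)\le{\rm MV}\le$ (that bound) by Theorems \ref{main-th-2} and \ref{Ber-thm}/\ref{LW-thm}, the mixed volume is pinned without ever being computed. You keep the lower half of the squeeze (Theorem \ref{main-th-2} plus Corollary \ref{cor-th-1}) but replace the upper half by an honest evaluation of the mixed volume from the explicit supports \eqref{equmx-1}--\eqref{equmx-2}, via elimination of $\rho$. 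Your reading of the supports is correct in both cases, the reduced Newton polytopes are as you say ($\Delta$ and $n-3$ copies of $2\Delta$ for $m=2$; $n-2$ copies of $3\Delta$ for $m=3$), and the resulting products $1\cdot2^{n-3}$ and $3^{n-2}$ are the right numbers. What your route buys is independence from the external fact that the real count of the cited systems achieves B\'ezout's bound; what it costs is that you must actually certify a mixed-volume identity, which is where the one weak link sits.

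That weak link is the assertion that ``the Li--Wang mixed volume of Theorem \ref{LW-thm} is attained for generic coefficients.'' Theorem \ref{LW-thm} is only an upper bound, and the affine bound ${\rm MV}(\mathcal{A}_i\cup\{\boldsymbol{0}\})$ is \emph{not} generically sharp in general --- this failure is precisely what motivated the refined (stable/twisted) bounds in \cite{Roj99,Roj03} --- so you cannot equate ${\rm MV}$ with the generic affine root count of the averaged system, nor transfer that count through the elimination, without further argument. The repair is easy and keeps your computation intact: adjoin a generic constant term to each $\bar{f}_{1,s}$, $s=3,\ldots,n$, so that the supports become exactly $\mathcal{A}_1,\{\boldsymbol{0}_{n-1}\}\cup\mathcal{A}_3,\ldots,\{\boldsymbol{0}_{n-1}\}\cup\mathcal{A}_n$, and apply Bernstein's Theorem \ref{Ber-thm} to \emph{that} system: for generic coefficients its number of zeros in the torus $(\mathbb{C}^*)^{n-1}$ \emph{equals} the mixed volume in question. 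Your elimination of $\rho$ (now producing reduced quadratics, resp.\ cubics, with generic constant terms) then counts those toric zeros directly --- each reduced solution has nonzero coordinates and determines a unique nonzero $\rho$ generically --- yielding ${\rm MV}=2^{n-3}$ and $3^{n-2}$ exactly, with no appeal to attainment of the affine bound. With that substitution your proof is complete and, if anything, more self-contained than the paper's.
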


\begin{proof}
The desired result for $m=2$ follows from Theorem 1 (or Corollary 2) of \cite{JLXZ09} and the result for $m=3$ follows from Theorem 1 of \cite{BLV18}. In each case, the maximum number of real isolated zeros of the resulting algebraic system $\{\bar{f}_{1,1}(\boldsymbol{\eta}),\bar{f}_{1,3}(\boldsymbol{\eta}),\ldots,\bar{f}_{1,n}(\boldsymbol{\eta})\}$
equals to its B\'ezout's bound.
\end{proof}

\begin{table}[h]
\begin{center}
\caption{Some BKK bounds of $H_2(n,m)$.}\label{Tab-3}
\begin{tabular}{|c|c|c||c||c||c||c||c||c|}
  \hline
  \multicolumn{9}{|c|}{$n$}  \\
  \hline \hline
  & & 3 & 4 & 5 & 6 & 7 & 8 & $\cdots$\\
  \hline \hline
  \multirow{8}{*}{$m$}
  & 2 & {\color{red}{3}} &  {\color{red}{9}} & 27 & 81 & 243 & 729 &  $\cdots$\\
  & 3 & 10 & 50 & 250 & 1250 & - & - &  $\cdots$\\
  & 4 & 21 &  147 & 1029 & - & - & - &  \\
  & 5 & 36 & 324 & - & - & - & - &  \\
  & 6 & 55 & 605 & - &  - & - & - &  \\
  & 7 & 78 & - & - & - & - & - &  \\
  & 8 & 105 & - & - & - & - & - &  \\
  & $\vdots$ &  &  &  &  &  &  & $\ddots$\\
  \hline
\end{tabular}
\end{center}
\end{table}

We provide in Table \ref{Tab-3} some values of the BKK bound of $H_2(n,m)$. Recently, the result $H_2(4,2)=9$ is proved by Djedid and others \cite[Theorem 1]{DBM21}. Note that the second-order averaged functions were computed by using
the conditions of $\bar{f}_{1,1}(\boldsymbol{\eta})=\bar{f}_{1,3}(\boldsymbol{\eta})=\cdots=\bar{f}_{1,n}(\boldsymbol{\eta})\equiv0$. By observing the numbers of the BKK bound, we pose the following conjecture.

\begin{conjecture}\label{conj-1}
Let $\nu_{n,m}$ denote the BKK bound of $H_2(n,m)$. Then $\nu_{n,m}=\nu_{3,m}\times(2m-1)^{n-3}$.
\end{conjecture}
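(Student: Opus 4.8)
The plan is to reduce the conjecture to a purely combinatorial statement about mixed volumes and then to establish a recursion in the dimension $n$. By definition $\nu_{n,m}$ is the mixed volume ${\rm{MV}}(\mathcal{A}_1^{(2)},\{\boldsymbol{0}_{n-1}\}\cup\mathcal{A}_3^{(2)},\ldots,\{\boldsymbol{0}_{n-1}\}\cup\mathcal{A}_n^{(2)})$ attached to the second-order system $\{\bar{f}_{2,1},\bar{f}_{2,3},\ldots,\bar{f}_{2,n}\}$, where $\mathcal{A}_s^{(2)}$ denotes the support of $\bar{f}_{2,s}(\boldsymbol{\eta})$ (cf. Theorems \ref{main-th-2} and \ref{LW-thm}). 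Hence the whole statement is a claim about how these Newton polytopes grow when a coordinate is adjoined. The clean factor $(2m-1)^{n-3}$ strongly suggests that each new variable contributes a fixed one-dimensional factor of lattice length $2m-1$, so the natural tool is the standard recursion for mixed volumes under the adjunction of a single coordinate direction.

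First I would make the supports explicit. The second-order averaged functions arise from the recurrence \eqref{equ2-6} under the constraint $\boldsymbol{f}_1\equiv0$, which is precisely the regime of Table \ref{Tab-3}. Pushing the induction of Theorem \ref{main-th-0} one step beyond its qualitative total-degree bound $2m$, I would track the precise monomial support surviving the $\theta$-averaging: the parity selection imposed by the $\cos\theta,\sin\theta$ factors, already visible in \eqref{equmx-1} and \eqref{equmx-2}, again restricts which exponent vectors occur. After the substitution $\rho=R^2$ used in the proof of Theorem \ref{main-th-2}, I expect closed expressions for $\mathcal{A}_1^{(2)}$ and $\mathcal{A}_s^{(2)}$ analogous to \eqref{equmx-1} and \eqref{equmx-2}, but with the degree parameter replaced by $2m$. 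This is the genuinely laborious algebraic step, and I would isolate it as a lemma paralleling Theorem \ref{main-th-2}.

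Next I would exhibit the recursion. The structural feature to isolate is that the new variable $X_n$ enters the $n$th averaged equation in a controlled manner: the polytope $\{\boldsymbol{0}_{n-1}\}\cup\mathcal{A}_n^{(2)}$ should decompose, modulo the directions already present in dimension $n-1$, as a Minkowski contribution of a single segment in the new coordinate direction of lattice length $2m-1$. Writing $\pi$ for the projection forgetting the $X_n$ coordinate, I would then invoke the multilinearity of the mixed volume together with the segment rule, collapsing the $(n-1)$-dimensional mixed volume $\nu_{n,m}$ into $(2m-1)$ times an $(n-2)$-dimensional mixed volume whose polytopes are exactly those of the $(n-1)$-variable system. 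This yields the one-step identity $\nu_{n,m}=(2m-1)\,\nu_{n-1,m}$, and iterating down to the base case $n=3$ gives $\nu_{n,m}=\nu_{3,m}\,(2m-1)^{n-3}$.

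The hard part will be the middle step: verifying that the new variable contributes \emph{exactly} a segment of lattice length $2m-1$, and not one of length $2m$ nor a genuinely two-dimensional polytope that would spoil the factorization. One must rule out, via the parity constraints on the support, the top-degree monomial in $X_n$ that would push the length to $2m$; this is presumably the very source of the discrepancy between the first-order factor $m$ (Table \ref{Tab-2}) and the second-order factor $2m-1$. A secondary obstacle is genericity: to conclude that the mixed volume is attained one must check, in the spirit of Theorems \ref{Ber-thm} and \ref{LW-thm}, that the parameters $\boldsymbol{p}$ of system \eqref{equ-3} are generic enough that no lower-dimensional coincidence among the supports reduces the bound. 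Before attempting the general combinatorial argument I would re-verify the recursion numerically against Table \ref{Tab-3}, since a single exceptional small case would signal a missing monomial in the support computation of the preceding step.
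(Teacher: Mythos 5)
The statement you are trying to prove is posed in the paper only as a \emph{conjecture}: the authors offer no proof, merely the observation that the computed BKK bounds in Table \ref{Tab-3} (ratios $3,5,7,9,11$ for $m=2,\ldots,6$) are consistent with a factor of $2m-1$ per added dimension. So there is no argument in the paper to compare yours against, and your proposal should be judged on its own merits as an attempted proof of an open statement.

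As such, it has a genuine gap precisely at the step you yourself flag as ``the hard part.'' The reduction $\nu_{n,m}=(2m-1)\,\nu_{n-1,m}$ via multilinearity and the segment rule requires that one of the $n-1$ polytopes entering the mixed volume be (up to Minkowski summands already lying in the hyperplane $X_n=0$) a lattice segment of length $2m-1$ in the $X_n$-direction. But the polytope $\{\boldsymbol{0}_{n-1}\}\cup\mathcal{A}_n^{(2)}$ is the full Newton polytope of $\bar f_{2,n}$, which contains many mixed monomials in $R,X_3,\ldots,X_n$; it is not a segment, and you give no argument that it decomposes as a prism over the new direction. Moreover, the remaining polytopes $\mathcal{A}_1^{(2)},\ldots,\{\boldsymbol{0}_{n-1}\}\cup\mathcal{A}_{n-1}^{(2)}$ for the $n$-dimensional system are \emph{not} the lifts of their $(n-1)$-dimensional counterparts: each acquires new vertices involving $X_n$ (compare how the sets in \eqref{equmx-1}--\eqref{equmx-2} range over all indices $j_1,\ldots,j_t$ including the new one), so the projection $\pi$ forgetting $X_n$ does not send the dimension-$n$ data onto the dimension-$(n-1)$ data. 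Both facts would have to be established by an explicit support computation for the second-order averaged functions --- an analogue of \eqref{equmx-1}--\eqref{equmx-2} at order two, complicated by the constraint $\boldsymbol{f}_1\equiv0$ and by the denominators $R^{\mu_{2,j}}$ from Theorem \ref{main-th-0} --- which you defer to a lemma but do not carry out. Until that lemma is proved and the prism structure verified, the recursion is an expectation, not a proof; the conjecture remains open.
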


Recall that what we are mainly interested in is how many of the complex isolated zeros (provided by the BKK bound for a given class of polynomial systems) are real? The answer to this question depends very much on the type of the systems. In general, it is difficult to estimate the number of real solutions of a polynomial system unless the system has special structures. For work related to this question, we refer the reader to \cite{BS94,BS11,FS11}. Concerning the algebraic systems we encountered in the study of limit cycles, we have not found any bound for the total number of real solutions that is smaller than the BKK bound. How to find such a small bound is still one of the main problems that remain for research.

\subsection{Algorithmic Derivation of Conditions for a Specified Number of Limit Cycles}\label{sub-sect3-3}

Our third question, relevant to the algorithmic aspects of limit-cycle bifurcation, then arises: \textit{under what conditions does a differential system of the form \eqref{equ-3} have a prescribed number of limit cycles}? In the following we provide an algorithmic approach to answer the question by means of symbolic and algebraic computation. Let $\boldsymbol{f}_k(\boldsymbol{\eta})=(f_{k,1}(\boldsymbol{\eta}),f_{k,3}(\boldsymbol{\eta}),\ldots,f_{k,n}(\boldsymbol{\eta}))$ be the $k$th-order averaged function associated to system \eqref{equ-3}. Denote the Jacobian of the function $\boldsymbol{f}_k(\boldsymbol{\eta})$ by $J_{\boldsymbol{f}_k}(\boldsymbol{\eta})$. That is,
\begin{equation}\label{equ3-0-3}
\begin{split}
J_{\boldsymbol{f}_k}(\boldsymbol{\eta})=
\left[
 \begin{matrix}
   \frac{\partial f_{k,1}}{\partial R} & \frac{\partial f_{k,1}}{\partial X_3} & \cdots& \frac{\partial f_{k,1}}{\partial X_n} \\
   \frac{\partial f_{k,3}}{\partial R} & \frac{\partial f_{k,3}}{\partial X_3} & \cdots& \frac{\partial f_{k,3}}{\partial X_n} \\
   \vdots& \vdots & \ddots & \vdots \\
   \frac{\partial f_{k,n}}{\partial R} & \frac{\partial f_{k,n}}{\partial X_3} & \cdots& \frac{\partial f_{k,n}}{\partial X_n} \\
  \end{matrix}
  \right].\nonumber
\end{split}
\end{equation}

Let $D_k(\boldsymbol{\eta})$ be the determinant of the Jacobian $J_{\boldsymbol{f}_k}(\boldsymbol{\eta})$. The following theorem provides sufficient conditions for system \eqref{equ-3} to have exactly $\ell$ limit cycles that can bifurcate from the origin.

\begin{theorem}\label{semi-averaging}
For $\varepsilon>0$ sufficiently small, system \eqref{equ-3} up to the $k$th-order averaging has exactly $\ell$ limit cycles that can bifurcate from the origin if the following semi-algebraic system
\begin{equation}\label{equ3-0-4}
\begin{split}
\left\{
\begin{array}{ll}
&\bar{f}_{k,1}(\boldsymbol{\eta})=\bar{f}_{k,3}(\boldsymbol{\eta})
=\cdots=\bar{f}_{k,n}(\boldsymbol{\eta})=0, \\
&R>0,\quad \bar{D}_{k}(\boldsymbol{\eta})\neq0, \quad b\neq0
\end{array}
\right.
\end{split}
\end{equation}
has exactly $\ell$ distinct real solutions with respective to the variables $\boldsymbol{\eta}$. Here $\bar{f}_{k,j}(\boldsymbol{\eta})$, for $j=1,3,\ldots,n$, are polynomials obtained in Theorem \ref{main-th-0} and $\bar{D}_{k}(\boldsymbol{\eta})$ is the numerator of the determinant $D_k(\boldsymbol{\eta})$.
\end{theorem}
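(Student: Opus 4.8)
The plan is to reduce the dynamical statement about limit cycles to the algebraic statement about the semi-algebraic system \eqref{equ3-0-4} by chaining together three ingredients already in hand: the averaging criterion of Theorem \ref{averaging-thm}, the polynomial reduction of Theorem \ref{main-th-0}, and the Jacobian form of the Brouwer-degree condition in Remark \ref{remk2-2}. Throughout I work in the regime in which $\boldsymbol{f}_k$ is the first nonvanishing averaged function, i.e.\ $\boldsymbol{f}_1\equiv\cdots\equiv\boldsymbol{f}_{k-1}\equiv0$; this is precisely hypothesis (b) of Theorem \ref{averaging-thm} with $j=k$, and it is the standing assumption for $k$th-order averaging used already in Corollary \ref{main-co-1}.

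First I would identify the relevant zeros of the averaged function with the real solutions of the polynomial system. By Theorem \ref{main-th-0}, for each $j\in\{1,3,\ldots,n\}$ we have $R^{\mu_{k,j}}f_{k,j}(\boldsymbol{\eta})=\bar{f}_{k,j}(\boldsymbol{\eta})$ with $0\leq\mu_{k,j}\leq k-1$. Since the coordinate change \eqref{equA3-1} fails to be a diffeomorphism only at $r=0$, the domain of interest is $\{R>0\}$, and there $R^{\mu_{k,j}}\neq0$, so $f_{k,j}(\boldsymbol{\eta})=0\Leftrightarrow\bar{f}_{k,j}(\boldsymbol{\eta})=0$. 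Hence the common zeros of $\boldsymbol{f}_k$ with $R>0$ coincide with the real solutions of $\bar{f}_{k,1}=\bar{f}_{k,3}=\cdots=\bar{f}_{k,n}=0$ subject to $R>0$. The constraint $b\neq0$ in \eqref{equ3-0-4} simply preserves the validity of passing to $\theta$ as the independent variable in Lemma \ref{lem-main-1}.

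Next I would translate the nondegeneracy condition. Each $f_{k,j}=R^{-\mu_{k,j}}\bar{f}_{k,j}$ is a polynomial over a power of $R$, hence every entry of $J_{\boldsymbol{f}_k}$, and therefore its determinant $D_k(\boldsymbol{\eta})$, is a rational function whose denominator is a power of $R$; by definition $\bar{D}_k$ is its numerator, so $D_k=\bar{D}_k/R^{M}$ for some $M\geq0$. Since $R>0$ forces $R^{M}>0$, one gets $\bar{D}_k(\boldsymbol{\eta})\neq0\Leftrightarrow\det J_{\boldsymbol{f}_k}(\boldsymbol{\eta})\neq0$. (Concretely, at a common zero the term $-\mu_{k,j}R^{-\mu_{k,j}-1}\bar{f}_{k,j}$ in $\partial f_{k,j}/\partial R$ drops out, so the $j$th row of $J_{\boldsymbol{f}_k}$ is the $j$th row of $J_{\bar{\boldsymbol{f}}_k}$ scaled by $R^{-\mu_{k,j}}$, and the two determinants differ only by the positive factor $\prod_j R^{-\mu_{k,j}}$.) With both dictionaries established, each real solution $\boldsymbol{\eta}^{*}$ of \eqref{equ3-0-4} is a zero of $\boldsymbol{f}_k$ with $R^{*}>0$ at which $\det J_{\boldsymbol{f}_k}(\boldsymbol{\eta}^{*})\neq0$; by Remark \ref{remk2-2} the Brouwer degree is nonzero on a small $V$, condition (c) of Theorem \ref{averaging-thm} holds, and a $2\pi$-periodic solution---a limit cycle tending to $\boldsymbol{\eta}^{*}$ as $\varepsilon\to0$---bifurcates. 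To upgrade this to \emph{exactly} $\ell$, I would apply the implicit function theorem to the displacement map $\boldsymbol{\zeta}(\boldsymbol{z},\varepsilon)=\varepsilon^{k}\boldsymbol{f}_k(\boldsymbol{z})+\mathcal{O}(\varepsilon^{k+1})$: at each simple zero the nonvanishing Jacobian yields exactly one branch of zeros of $\boldsymbol{\zeta}$ for small $\varepsilon$, distinct simple zeros give distinct limit cycles, and uniform convergence on a compact neighborhood rules out any further zeros, so the $\ell$ solutions account for all bifurcating cycles.

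The hard part will be the \emph{exactness} of the count rather than mere existence. Existence follows directly from the degree argument, but pinning down ``exactly $\ell$'' requires that the nondegeneracy built into \eqref{equ3-0-4} via $\bar{D}_k\neq0$ genuinely certifies simple zeros of the \emph{analytic} averaged function $\boldsymbol{f}_k$, not merely of its polynomial numerators; this is where the relation $D_k=\bar{D}_k/R^{M}$ and the row-scaling identity on $\{R>0\}\cap\{\bar{f}_{k,j}=0\}$ must be verified with care, since away from the common zero set the two Jacobian determinants differ by correction terms that vanish only because each $\bar{f}_{k,j}$ does. Once the nondegeneracy conditions are shown to coincide on this locus, the bijection between constrained real solutions and limit cycles---hence the exact equality of their counts---is secured.
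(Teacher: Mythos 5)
Your argument follows the same route as the paper's own proof: each solution of \eqref{equ3-0-4} satisfies $\bar{D}_k\neq0$, hence $\det(J_{\boldsymbol{f}_k})\neq0$ on $\{R>0\}$ via the relation $D_k=\bar{D}_k/R^{M}$, so Remark \ref{remk2-2} gives a nonzero Brouwer degree and Theorem \ref{averaging-thm} yields a bifurcating limit cycle for each of the $\ell$ solutions. The paper's proof is a terser version of exactly this chain (it leaves implicit the $f_{k,j}\leftrightarrow\bar{f}_{k,j}$ and $D_k\leftrightarrow\bar{D}_k$ dictionaries and does not discuss the exactness of the count, which you address via the implicit function theorem applied to the displacement map), so your proposal is correct and, if anything, more detailed.
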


\begin{proof}
Assume that system \eqref{equ3-0-4} has $\ell$ distinct real solutions, which can be written as $\boldsymbol{\alpha}_1=(R^{(1)},X_3^{(1)},\ldots,X_n^{(1)})$, $\boldsymbol{\alpha}_2=(R^{(2)},X_3^{(2)},\ldots,X_n^{(2)}), \ldots, \boldsymbol{\alpha}_{\ell}=(R^{(\ell)},X_3^{(\ell)},\ldots,X_n^{(\ell)})$. Note that, for each solution $\boldsymbol{\alpha}_j$, $\bar{D}_{k}(\boldsymbol{\alpha}_j)\neq0$ implies that $D_k(\boldsymbol{\eta})\neq0$ in a suitable small neighborhood of $\boldsymbol{\eta}=(0,0,\ldots,0)$ with $R>0$. From Theorem \ref{averaging-thm}, we know that for $\varepsilon>0$ sufficiently small, system \eqref{equ-3} has a $T$-periodic solution $\boldsymbol{x}(t,\boldsymbol{\alpha}_j,\varepsilon)$ such that $\boldsymbol{x}(0,\boldsymbol{\alpha}_j,\varepsilon)\rightarrow\boldsymbol{\alpha}_j$ when $\varepsilon\rightarrow0$. This completes the proof.
\end{proof}

Theorem \ref{semi-averaging} provides an effective and straightforward computational method to verify whether an obtained bound ($\ell$) for the number of limit cycles of a given differential system can be reached. Its main task is to find the conditions on the parameters for system \eqref{equ3-0-4} to have exactly $\ell$ distinct real solutions. There are several software packages that can be used for solving such semi-algebraic systems. These packages implement the method of discriminant varieties of Lazard and Rouillier \cite{DLFR} and the method of Yang and Xia \cite{LYBX05} for real solution classification.
In practice, we first compute the mixed volume of the polynomial
system $\{\bar{f}_{k,1}(\boldsymbol{\eta}),\bar{f}_{k,3}(\boldsymbol{\eta}),\ldots,\bar{f}_{k,n}(\boldsymbol{\eta})\}$ to obtain an upper bound for the number of common isolated zeros of the system and then use Theorem \ref{semi-averaging} to check whether the obtained bound is reached. In Sections \ref{sect-diff} and \ref{sec-hyper}, we will demonstrate our algorithmic tests using a class of third-order differential equations and a four-dimensional hyperchaotic differential system.

\section{Existence of Periodic Solutions of Third-Order Differential \\ Equations}\label{sect-diff}
As an application of our main results, we now study the existence of periodic solutions of the following third-order differential equation:
\begin{equation}\label{eqap-1}
\begin{split}
\frac{d^3x}{dt^3}+p\frac{d^2x}{dt^2}+q\frac{dx}{dt}+\ell x=f\Big(x,\frac{dx}{dt},\frac{d^2x}{dt^2}\Big),
\end{split}
\end{equation}
where $p$, $q$ and $\ell$ are real parameters, and
\begin{equation}\label{eqap-2}
\begin{split}
f\Big(x,\frac{dx}{dt},\frac{d^2x}{dt^2}\Big)=a_1x^2+a_2x\frac{dx}{dt}+a_3x\frac{d^2x}{dt^2}
+a_4\Big(\frac{dx}{dt}\Big)^2+a_5\frac{dx}{dt}\frac{d^2x}{dt^2}
+a_6\Big(\frac{d^2x}{dt^2}\Big)^2+\mathcal{O}(3)\nonumber
\end{split}
\end{equation}
is a $\mathcal{C}^3$ differential function. Set $x_1=x$, $x_2=\dot{x}_1$ and $x_3=\dot{x}_2$, where the dot denotes derivative with respect to the variable $t$. Then we can write equation \eqref{eqap-1} as the following differential system
\begin{equation}\label{eqap-3}
\begin{split}
\dot{x}_1&=x_2,\quad \dot{x}_2=x_3,\\
\dot{x}_3&=-\ell x_1-qx_2-px_3+f(x_1,x_2,x_3).
\end{split}
\end{equation}
We want to analyze the existence of limit cycles of system \eqref{eqap-3} by using the second-order averaging method. In order to transform system \eqref{eqap-3} into the form \eqref{equ-3}, we assume that
\begin{equation}\label{eqap-4}
\begin{split}
\ell&=-\varepsilon(\varepsilon\mu_2+\mu_1)(\varepsilon^4\alpha_2^2
+2\varepsilon^{3}\alpha_{{1}}\alpha_{{2}}+\varepsilon^{2}\alpha_{{1}}^{2}+\beta^{2}),\\
p&=-\varepsilon(\varepsilon\mu_{{2}}+2\varepsilon\alpha_{{2}}+\mu_{{1}}
+2\alpha_{{1}}),\\
q&=\beta^2+\varepsilon^2(\varepsilon\alpha_2+\alpha_1)
(2\varepsilon\mu_{{2}}+\varepsilon\alpha_{{2}}+2\mu_{{1}}+\alpha_{{1}}),\\
a_i&=a_{i,0}+\varepsilon a_{i,1}+\varepsilon^2a_{i,2},\quad i=1,2,\ldots,6,
\end{split}
\end{equation}
where the $\alpha_j$'s, $\mu_j$'s, and $a_{i,j}$'s are real parameters. Applying the change of variables
\begin{equation}\label{eqap-5}
\begin{split}
(x,y,z)^T=M(x_1,x_2,x_3)^T,
\end{split}
\end{equation}
with
\begin{equation}\label{eqap-6}
\begin{split}
M=\left[
 \begin{matrix}
  \varepsilon^2(\varepsilon\mu_2+\mu_1)(\varepsilon\alpha_2+\alpha_1)  &  -\varepsilon(\varepsilon\mu_{{2}}+\varepsilon\alpha_{{2}}
    +\mu_{{1}}+\alpha_{{1}}) & 1 \\
   -\beta\varepsilon(\varepsilon\mu_2+\mu_1) & \beta & 0 \\
  \beta^2+\varepsilon^2(\varepsilon\alpha_2+\alpha_1)^2  & -2\varepsilon(\varepsilon\alpha_2+\alpha_1) & 1 \\
  \end{matrix}
  \right],\nonumber
\end{split}
\end{equation}
one sees that system \eqref{eqap-3} becomes
\begin{equation}\label{eqap-7}
\begin{split}
\dot{x}&=(\varepsilon\alpha_1+\varepsilon^2\alpha_2)x-\beta y+ g(x,y,z,\varepsilon),\\
\dot{y}&=\beta x+(\varepsilon\alpha_1+\varepsilon^2\alpha_2)y,\\
\dot{z}&=(\varepsilon\mu_1+\varepsilon^2\mu_2)z+g(x,y,z,\varepsilon).
\end{split}
\end{equation}
The expression of $g(x,y,z,\varepsilon)$ is extremely long and we do not produce it
here. Our main result on the limit cycles of system \eqref{eqap-7} is the following.
\begin{theorem}\label{main-th-3}
The following statements hold for $\varepsilon>0$ sufficiently small.
\begin{itemize}
\item [\emph{(a)}] System \eqref{eqap-7} has, up to the first-order averaging, at most 1 limit cycle bifurcating from the origin, and this number can be reached if one of the following 4 conditions holds:
\begin{equation}\label{eqap-8}
\begin{split}
\mathcal{C}_1&=[R_1,\,R_2<0;\,\,R_3>0],\quad \mathcal{C}_2=[R_1,\,R_3<0;\,\,R_2>0],\\
\mathcal{C}_3&=[R_2,\,R_3<0;\,\,R_1>0],\quad \mathcal{C}_4=[R_1,\,R_2,\,R_3>0],
\end{split}
\end{equation}
where
\begin{equation}\label{eqap-9}
\begin{split}
R_1&=\alpha_1,\quad
R_2=a_{{6,0}}\beta^{4}+(a_{{4,0}}-a_{{3,0}})\beta^{2}+a_{{1,0}},\\
R_3&=a_{{3,0}}\mu_{{1}}\beta^{2}-2a_{{1,0}}\mu_{{1}}-2a_{{1,0}}\alpha_{{1}}.\nonumber
\end{split}
\end{equation}

\item [\emph{(b)}] System \eqref{eqap-7} has, up to the second-order averaging, at most 2 limit cycles bifurcating from the origin, and this number can be reached if one of the following 4 conditions holds:
\begin{equation}\label{eqap-10}
\begin{split}
\bar{\mathcal{C}}_1&=[\bar{R}_1,\,\bar{R}_2,\,\bar{R}_3<0;\,\,\bar{R}_4\geq0;\,\,\bar{R}_5>0],\\
\bar{\mathcal{C}}_2&=[\bar{R}_1<0;\,\,\bar{R}_4\leq0;\,\,\bar{R}_2,\,\bar{R}_3,\,\bar{R}_5>0],\\
\bar{\mathcal{C}}_3&=[\bar{R}_2<0;\,\,\bar{R}_4\geq0;\,\,\bar{R}_1,\,\bar{R}_3,\,\bar{R}_5>0],\\
\bar{\mathcal{C}}_4&=[\bar{R}_3<0;\,\,\bar{R}_4\leq0;\,\,\bar{R}_1,\,\bar{R}_2,\,\bar{R}_5>0],
\end{split}
\end{equation}
where
\begin{equation}\label{eqap-11}
\begin{split}
\bar{R}_1&=\alpha_2,\quad \bar{R}_2=2a_{{3,1}}\beta^{2}-3a_{{1,1}},\quad
\bar{R}_3=a_{{3,1}}\mu_{{2}}\beta^{2}-2a_{{1,1}}\mu_{{2}}-2a_{{1,1}}\alpha_{{2}},\\
\bar{R}_4&=2a_{{6,1}}a_{{3,1}}^{2}\beta^{8}+\big(4a_{{2,0}}a_{{3,1}}a_{{6,0}}\alpha_{{2}}
-a_{{2,0}}a_{{3,1}}a_{{6,0}}\mu_{{2}}-8a_{{1,1}}
a_{{3,1}}a_{{6,1}}-2a_{{3,1}}^{3}+2a_{{4,1}}a_{{3,1}}^{2}\big)\beta^{6}\\
&\quad+\big(2a_{{2,0}}a_{{1,1}}a_{{6,0}}\mu_{{2}}-4
a_{{2,0}}a_{{1,1}}a_{{6,0}}\alpha_{{2}}+8a_{{1,1}}^{2}a_{{6,1}}+
10a_{{1,1}}a_{{3,1}}^{2}-8a_{{1,1}}a_{{4,1}}a_{{3,1}}\big)\beta^{4}\\
&\quad+\big(8a_{{1,1}}^{2}a_{{4,1}}-16a_{{1,1}}^{2}a_{{3,1}}\big)\beta^{2}+8a_{{1,1}}^{3},\\
\bar{R}_5&=4a_{{3,1}}^{2}a_{{6,1}}^{2}\beta^{12}+\big(16a_{{2,0}}a_{{3,1}}a_{{6,0}}a_{{6
,1}}\alpha_{{2}}-4a_{{2,0}}a_{{3,1}}a_{{6,0}}a_{{6,1}}\mu_{{2}}-16a_{{1,1}}a_{{3,1}}a_{{6,1}}^{2}-8a_{{3,1}}^{3}a_{{6,1}}\\
&\quad+8a_{{3,1}}^{2}a_{{4,1}}a_{{6,1}}\big)\beta^{10}
+\big(a_{{2,0}}^{2}a_{{6,0}}^{2}\mu_{{2}}^{2}+8a_{{2,0}}^
{2}a_{{6,0}}^{2}\mu_{{2}}\alpha_{{2}}+16a_{{2,0}}^{2}a_{{6,0}}
^{2}\alpha_{{2}}^{2}+8a_{{1,1}}a_{{2,0}}a_{{6,0}}a_{{6,1}}\mu_{{2}}\\
&\quad-16a_{{1,1}}a_{{2,0}}a_{{6,0}}a_{{6,1}}\alpha_{{2}}+4a_{{3,1}}^
{2}\mu_{{2}}a_{{2,0}}a_{{6,0}}-16a_{{3,1}}^{2}\alpha_{{2}}a_{{2,0}
}a_{{6,0}}-4a_{{2,0}}a_{{3,1}}a_{{4,1}}a_{{6,0}}\mu_{{2}}\\
&\quad+16a_{{1,1}}^{2}a_{{6,1}}^{2}+16a_{{2,0}}a_{{3,1}}a_{{4,1}}a_{{6,0}}\alpha_{{2}}+40a_{{1,1}}a_{{3,1}}^{2}a_{{6,1}}-32a_{{1,1}}a_{{3,1}}a_{
{4,1}}a_{{6,1}}+4a_{{3,1}}^{4}-8a_{{3,1}}^{3}a_{{4,1}}\\
&\quad+4a_{{3,1}}^{2}a_{{4,1}}^{2}\big)\beta^{8}
+\big(32a_{{1,1}}a_{{3,1}}\alpha_{{2}}a_{{2,0}}a_{{6,0}}-12a_{{1,1}}a
_{{3,1}}\mu_{{2}}a_{{2,0}}a_{{6,0}}+8a_{{1,1}}a_{{2,0}}a_{{4,1}}a_{{6,0}}\mu_{{2}}\\
&\quad-16a_{{1,1}}a_{{2,0}}a_{{4,1}}a_{{6,0}}\alpha_{{2}}
-64a_{{1,1}}^{2}a_{{3,1}}a_{{6,1}}
+32a_{{1,1}}^{2}a_{{4,1}}a_{{6,1}}-24a_{{1,1}}a_{{3,1}}^{3}
+40a_{{1,1}}a_{{3,1}}^{2}a_{{4,1}}\\
&\quad-16a_{{1,1}}a_{{3,1}}a_{{4,1}}^{2}\big)\beta^{6}
+\big(8a_{{1,1}}^{2}\mu_{{2}}a_{{2,0}}a_{{6,0}}-16a_{{1,1}}^{2}\alpha_{{2}}a_{{2,0}}a_{{6,0}}
+32a_{{1,1}}^{3}a_{{6,1}}+52a_{{1,1}}^{2}a_{{3,1}}^{2}\\
&\quad-64a_{{1,1}}^{2}a_{{3,1}}a_{{4,1}}+16a_{{1,1}}^{2}a_{{4,1}}^{2}\big)\beta^{4}
+\big(32a_{{1,1}}^{3}a_{{4,1}}-48a_{{1,1}}^{3}a_{{3,1}}\big)\beta^{2}+16a_{{1,1}}^{4}.\nonumber
\end{split}
\end{equation}

\end{itemize}
\end{theorem}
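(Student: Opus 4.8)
The plan is to treat system \eqref{eqap-7} as an instance of the general class \eqref{equ-3} with $n=3$ and $m=2$, so that the entire apparatus of Sections \ref{sect2-pre} and \ref{sect3-0} applies verbatim. First I would check that the parametrization \eqref{eqap-4}--\eqref{eqap-6} does transform \eqref{eqap-3} into \eqref{eqap-7}, and hence into the form \eqref{equ-3}: the linear part of \eqref{eqap-7} has eigenvalues $(\varepsilon\alpha_1+\varepsilon^2\alpha_2)\pm\beta i$ and $\varepsilon\mu_1+\varepsilon^2\mu_2$, which at $\varepsilon=0$ reduce to $\pm\beta i$ and $0$, the required complete zero-Hopf structure with $b=\beta$. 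Applying Lemma \ref{lem-main-1} with $k=2$ then puts the system in the standard averaging form, and Theorem \ref{thm2-a} together with Theorem \ref{main-th-0} guarantees that, after clearing the appropriate powers of $R$, the averaged functions $\bar{f}_{i,1}(\boldsymbol{\eta})$ and $\bar{f}_{i,3}(\boldsymbol{\eta})$ are genuine polynomials in $\mathcal{R}[\boldsymbol{\eta}]$ of total degree at most $im$.

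For part (a) I would compute the first-order averaged functions explicitly. Specializing the proof of Theorem \ref{H-th-1} to $m=2$, they acquire the reduced shapes $\bar{f}_{1,1}=\bar A+\bar A_1 X_3$ and $\bar{f}_{1,3}=\bar B X_3+\bar B_0 R^2+\bar B_2 X_3^2$, where now $\bar A,\bar A_1,\bar B,\bar B_0,\bar B_2$ are specific polynomials in $\alpha_1,\mu_1,\beta$ and the $a_{i,0}$ rather than free constants. Eliminating $X_3=-\bar A/\bar A_1$ and substituting yields a single scalar equation $R^2=\Phi$, which has at most one positive root; this is the upper bound $1$, consistent with $H_1(3,2)=1$. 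By Theorem \ref{semi-averaging} a limit cycle corresponds to a real, positive, simple zero, i.e. to $\Phi>0$ together with non-vanishing of the Jacobian determinant. Performing a real solution classification of the resulting semi-algebraic system (via a discriminant-variety or Yang--Xia computation) I would collapse the sign data to the three factors $R_1,R_2,R_3$ and show that existence of the single positive simple root is equivalent to the disjunction $\mathcal{C}_1\vee\mathcal{C}_2\vee\mathcal{C}_3\vee\mathcal{C}_4$ of \eqref{eqap-8}.

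For part (b) I would first impose $\bar{f}_{1,1}\equiv\bar{f}_{1,3}\equiv0$ identically in $\boldsymbol{\eta}$, which forces algebraic relations among the first-order parameters and realizes exactly the degenerate hypothesis (b) of Theorem \ref{averaging-thm}. Under these relations I would evaluate the second-order averaged functions from the Bell-polynomial recurrence \eqref{equ2-6}, obtaining $\bar{f}_{2,1},\bar{f}_{2,3}$ of degree at most $2m=4$ whose coefficients depend on the second-order data $\alpha_2,\mu_2,a_{i,1},a_{i,2}$. Although the generic BKK bound for $H_2(3,2)$ is $3$ (Table \ref{Tab-3}), the additional structure inherited from \eqref{eqap-4} suppresses one admissible positive root; I would make this precise by eliminating a variable (a resultant or Gr\"obner reduction) to obtain a univariate polynomial in $R^2$ and bounding its positive real roots by $2$. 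Maximality and the conditions \eqref{eqap-10} would again follow from a real solution classification: the polynomials $\bar R_1,\ldots,\bar R_5$ are the sign-determining invariants (leading coefficients, discriminants, and subresultant/border-polynomial quantities) of the reduced system, and $\bar{\mathcal{C}}_1,\ldots,\bar{\mathcal{C}}_4$ delineate the sign cells in which exactly two distinct positive simple roots occur.

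The main obstacle is the part (b) classification. The second-order averaged functions are very large, and after imposing the first-order vanishing relations the elimination step produces a univariate polynomial whose coefficients are high-degree polynomials in many parameters; deciding precisely which sign conditions yield exactly two positive simple roots, and verifying that the answer packages into the compact invariants $\bar R_4$ and $\bar R_5$, is a delicate computation requiring discriminant varieties and border-polynomial (Yang--Xia) techniques together with substantial simplification to reach the stated closed forms. Establishing the sharp bound $2$ for this specialized family, rather than the generic $3$, is the conceptual crux, since it rests entirely on the constraints that \eqref{eqap-4}--\eqref{eqap-6} impose on the averaged functions.
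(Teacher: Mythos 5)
Your proposal follows essentially the same route as the paper: compute the first-order averaged functions explicitly (they have exactly the shapes you describe, linear in $X_3$ for $\bar f_{1,1}$ and quadratic for $\bar f_{1,3}$, giving at most one positive root), then force $\bar f_{1,1}\equiv\bar f_{1,3}\equiv 0$ by fixing the first-order parameters, compute the second-order averaged functions, bound their positive roots by $2$, and derive the sign conditions $\mathcal{C}_i$, $\bar{\mathcal{C}}_i$ by applying Theorem \ref{semi-averaging} together with a real-solution-classification computation (the paper uses {\sf DISCOVERER}/RegularChains, i.e.\ the Yang--Xia method you name). The only cosmetic differences are your framing of part (a) via Theorem \ref{H-th-1} and the aside about the BKK bound $3$ for $H_2(3,2)$; the substance of the argument is the same.
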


\begin{proof}
Computing the third-order Taylor expansion of the expressions in \eqref{eqap-7}, with respect to $\varepsilon$ around $\varepsilon=0$, we obtain
\begin{equation}\label{eq-diff-1}
\begin{split}
\dot{x}&=(\varepsilon\alpha_1+\varepsilon^2\alpha_2)x-\beta y+ g_0(x,y,z)+\varepsilon g_1(x,y,z),\\
\dot{y}&=\beta x+(\varepsilon\alpha_1+\varepsilon^2\alpha_2)y,\\
\dot{z}&=(\varepsilon\mu_1+\varepsilon^2\mu_2)z+g_0(x,y,z)+\varepsilon g_1(x,y,z),\\
\end{split}
\end{equation}
where
\begin{small}
\begin{equation}\label{eq-diff-2}
\begin{split}
g_0(x,y,z)&=\frac{1}{\beta^4}\Big((\beta^{3}a_{{5,0}}-\beta a_{{2,0}})xy+\beta a_{{2,0}}yz+(\beta^{2}a_{{3,0}}-2a_{{1,0}})xz
+a_{{1,0}}z^{2}+\beta^{2}a_{{4,0}}y^{2}\\
&\quad+(\beta^{4}a_{{6,0}}-\beta^{2}a_{{3,0}}+a_{{1,0}})x^{2}\Big),\\
g_1(x,y,z)&=\frac{1}{\beta^5}\Big(( a_{{3,0}}\mu_{{1}}\beta^{2}+a_{{3,0}}\alpha_{{1}}\beta^{2}
+2\mu_{{1}}a_{{4,0}}\beta^{2}+a_{{2,1}}\beta^{2}-2\mu_{{1}}a_{
{1,0}}+2\alpha_{{1}}a_{{1,0}})yz\\
&\quad+(a_{{5,0}}\mu_{{1}}
\beta^{3}+a_{{3,1}}\beta^{3}-2a_{{2,0}}\mu_{{1}}\beta-2a_{{1,1}
}\beta)xz+(2a_{{6,0}}\mu_{{1}}\beta^{4}+2a_{{6,0}
}\alpha_{{1}}\beta^{4}\\
&\quad+a_{{5,1}}\beta^{4}-2a_{{3,0}}\mu_{{1}}
\beta^{2}-2\mu_{{1}}a_{{4,0}}\beta^{2}-a_{{2,1}}\beta^{2}+2
\mu_{{1}}a_{{1,0}}-2\alpha_{{1}}a_{{1,0}})xy\\
&\quad+(a_{{2,0}}\mu_{{1}}\beta+a_{{1,1}}\beta)z^{2}
+(a_{{5,0}}\mu_{{1}}\beta^{3}+a_{{5,0}}\alpha_{{1}}\beta^{3}+a_{{4,1}}\beta^{3}
-a_{{2,0}}\mu_{{1}}\beta+\alpha_{{1}}a_{{2,0}}\beta)y^{2}\\
&\quad+(a_{{6,1}}\beta^{5}-a_{{5,0}}\mu_{{1}}\beta^{3}-a_{{3,1}}
\beta^{3}+a_{{2,0}}\mu_{{1}}\beta+a_{{1,1}}\beta)x^{2}\Big).\nonumber
\end{split}
\end{equation}
\end{small}

Note that system \eqref{eq-diff-1} is in the form of \eqref{equ-3}. Now computing the first-order averaged functions associated to system \eqref{eq-diff-1}, we obtain
\begin{small}
\begin{equation}\label{eq-diff-3}
\begin{split}
f_{1,1}(R,X_3)=-\frac{\pi R}{\beta^5}\bar{f}_{1,1}(R,X_3),\quad
f_{1,3}(R,X_3)=\frac{\pi}{\beta^5}\bar{f}_{1,3}(R,X_3),
\end{split}
\end{equation}
\end{small}
where
\begin{equation}\label{eq-diff-3-1}
\begin{split}
\bar{f}_{1,1}(R,X_3)&=(-\beta^{2}a_{{3,0}}+2a_{{1,0}})X_{{3}}-2\beta^{4}\alpha_{{1}},\\
\bar{f}_{1,3}(R,X_3)&=(\beta^{4}a_{{6,0}}-\beta^{2}a_{{3,0}}+\beta^{2}a_{{4,0}}
+a_{{1,0}})R^{2}+2a_{{1,0}}X_{{3}}^{2}+2\beta^{4}\mu_{{1}}X_{{3}}.\nonumber
\end{split}
\end{equation}
It is obvious that system \eqref{eq-diff-3} can have at most one real solution with $R>0$. Hence, system \eqref{eq-diff-1} can have at most one limit cycle bifurcating from the origin. Moreover, the determinant of the Jacobian of $(f_{1,1}(R,X_3),f_{1,3}(R,X_3))$ is
\begin{equation}\label{eq-diff-4}
\begin{split}
D_1(R,X_3)=\mbox{det}\left(
 \begin{matrix}
   \frac{\partial f_{1,1}}{\partial R} & \frac{\partial f_{1,1}}{\partial X_3}\\
   \frac{\partial f_{1,3}}{\partial R} & \frac{\partial f_{1,3}}{\partial X_3}
  \end{matrix}
  \right)
  =-\frac{2\pi^2}{\beta^{10}}\cdot\bar{D}_1(R,X_3),\nonumber
\end{split}
\end{equation}
where
\begin{equation}\label{eq-diff-4-1}
\begin{split}
\bar{D}_1(R,X_3)&=-2\beta^{8}\mu_{{1}}\alpha_{{1}}+(-2\beta^{2}a_{{1,0}}a
_{{3,0}}+4a_{{1,0}}^{2})X_{{3}}^{2}+(\beta^{6}a
_{{3,0}}a_{{6,0}}-2\beta^{4}a_{{1,0}}a_{{6,0}}\\
&\quad-\beta^{4}a_{{3,0}}^{2}+\beta^{4}a_{{3,0}}a_{{4,0}}+3\beta^{2}a_{{1,0}}a_{{3,0}}
-2\beta^{2}a_{{1,0}}a_{{4,0}}-2a_{{1,0}}^{2})R^{2}+(-\beta^{6}a_{{3,0}}\mu_{{1}}\\
&\quad+2\beta^{4}a_{{1,0}}\mu_{{1}}-4\beta^{4}a_{{1,0}}\alpha_{{1}})X_{{3}}.\nonumber
\end{split}
\end{equation}
It follows from Theorem \ref{semi-averaging} that system \eqref{eq-diff-1} can have one limit cycle bifurcating from the origin if the semi-algebraic system
\begin{equation}\label{eq-diff-5}
\begin{split}
\left\{
\begin{array}{ll}
&\bar{f}_{1,1}(R,X_3)=\bar{f}_{1,3}(R,X_3)=0, \\
&R>0,\quad \bar{D}_1(R,X_3)\neq0,\quad \beta\neq0
\end{array}
\right.
\end{split}
\end{equation}
has exactly one real solution with respect to
the variables $R, X_3$. Using {\sf{DISCOVERER}} (or the package RegularChains[SemiAlgebraicSetTools] in Maple), we find that system \eqref{eq-diff-5} has exactly one real solution if and only if one of the conditions $\mathcal{C}_i$ $(i=1,2,3,4)$ in \eqref{eqap-8} holds.

To let $(f_{1,1}(R,X_3),f_{1,3}(R,X_3))=(0,0)$, we take $\alpha_1=0$, $\mu_1=0$, $a_{3,0}=0$, $a_{1,0}=0$ and $a_{4,0}=-\beta^2a_{6,0}$. Computing the second-order averaged functions, we have
\begin{equation}\label{eq-diff-6}
\begin{split}
f_{2,1}(R,X_3)=\frac{\pi R}{4\beta^5}\bar{f}_{2,1}(R,X_3),\quad
f_{2,3}(R,X_3)=-\frac{\pi}{\beta^5}\bar{f}_{2,3}(R,X_3),
\end{split}
\end{equation}
where
\begin{equation}\label{eq-diff-6-1}
\begin{split}
\bar{f}_{2,1}&(R,X_3)=a_{{2,0}}a_{{6,0}}R^{2}+(4\beta^{2}a_{{3,1}}-8a_{{1,1}}
)X_{{3}}+8\beta^{4}\alpha_{{2}},\\
\bar{f}_{2,3}&(R,X_3)=-2a_{{1,1}}X_{{3}}^{2}+a_{{2,0}}a_{{6,0}}R^{2}X_{{3}}
-2\beta^{4}\mu_{{2}}X_{{3}}+(-\beta^{4}a_{{6,1}}+\beta^{2}a_{{3,1}
}-\beta^{2}a_{{4,1}}-a_{{1,1}})R^{2}.\nonumber
\end{split}
\end{equation}
It is not hard to check that the polynomial system $\{\bar{f}_{2,1}(R,X_3),\bar{f}_{2,3}(R,X_3)\}$ has at most two real solutions with $R>0$. As a result, system \eqref{eq-diff-1} can have at most two limit cycles bifurcating from the origin. In what follows, we show that this number can be reached.

Note that the determinant of the Jacobian of $(f_{2,1}(R,X_3),f_{2,3}(R,X_3))$ is
\begin{equation}\label{eq-diff-7}
\begin{split}
D_2(R,X_3)=\mbox{det}\left(
 \begin{matrix}
   \frac{\partial f_{2,1}}{\partial R} & \frac{\partial f_{2,1}}{\partial X_3}\\
   \frac{\partial f_{2,3}}{\partial R} & \frac{\partial f_{2,3}}{\partial X_3}
  \end{matrix}
  \right)
  =-\frac{\pi^2}{4\beta^{10}}\cdot\bar{D}_2(R,X_3),\nonumber
\end{split}
\end{equation}
where
\begin{equation}\label{eq-diff-7-1}
\begin{split}
\bar{D}_2(R,X_3)&=-16\beta^{8}\mu_{{2}}\alpha_{{2}}
+3R^{4}a_{{2,0}}^{2}a_{{6,0}}^{2}
+(-16\beta^{2}a_{{1,1}}a_{{3,1}}+32a_{{1,1}}^{2})X_{{3}}^{2}\\
&+(-4\beta^{2}a_{{2,0}}a_{{3,1}}a_{{6,0}}-4a_{{1,1}}a_{{2,0}}a_{{6,0}})R^{2}X_{{3}}
+(-8\beta^{6}a_{{3,1}}\mu_{{2}}+16\beta^{4}a_{{1,1}}\mu_{{2}}-32
\beta^{4}a_{{1,1}}\alpha_{{2}})X_{{3}}\\
&+(8\beta^{4}
a_{{2,0}}a_{{6,0}}\alpha_{{2}}-6\beta^{4}a_{{2,0}}a_{{6,0}}\mu_{{2
}}-16a_{{1,1}}^{2}-16\beta^{2}a_{{1,1}}a_{{4,1}}+24\beta^{
2}a_{{1,1}}a_{{3,1}}+8\beta^{4}a_{{3,1}}a_{{4,1}}\\
&-8\beta^{4}a_{{3,1}}^{2}-16\beta^{4}a_{{1,1}}a_{{6,1}}+8\beta^{6}a_{{3,1}
}a_{{6,1}})R^{2}.
\nonumber
\end{split}
\end{equation}

From Theorem \ref{semi-averaging}, we know that system \eqref{eq-diff-1} can have two limit cycles bifurcating from the origin if the semi-algebraic system
\begin{equation}\label{eq-diff-8}
\begin{split}
\left\{
\begin{array}{ll}
&\bar{f}_{2,1}(R,X_3)=\bar{f}_{2,3}(R,X_3)=0, \\
&R>0,\quad \bar{D}_2(R,X_3)\neq0,\quad \beta\neq0
\end{array}
\right.
\end{split}
\end{equation}
has exactly two real solutions with respect to
the variables $R, X_3$. In a similar way, we find that system \eqref{eq-diff-8} has exactly two real solutions if and only if one of the conditions $\bar{\mathcal{C}}_i$ $(i=1,2,3,4)$ in \eqref{eqap-10} holds. This completes the proof of Theorem \ref{main-th-3}.

\end{proof}

\section{Zero-Hopf Bifurcation in a Four-Dimensional Hyperchaotic \\ System}\label{sec-hyper}
The aim of this section is to study the existence of periodic orbits of the following four-dimensional differential system:
\begin{equation}\label{eqhyp-1}
\begin{split}
\dot{x}&=a_1(y-x)-a_2w,\quad \dot{y}=a_3x-y-b_1xz-a_4w,\\
\dot{z}&=b_2xy-a_5z,\quad \dot{w}=a_6yz-a_7w,
\end{split}
\end{equation}
where the $a_i$'s, $b_1$ and $b_2$ are real parameters. Note that system \eqref{eqhyp-1} is a generalization of the system introduced in \cite{DZYet19}, where the system is restricted to $b_1=b_2=1$. Here using the second-order averaging method, we study the maximum number of limit cycles that can bifurcate from a zero-Hopf equilibrium of system \eqref{eqhyp-1} when the system is perturbed inside the class of all differential systems of the same form. Thus $a_i$ and $b_j$ may take the form as follows
\begin{equation}\label{eqhyp-2}
\begin{split}
a_i=a_{i,0}+\varepsilon a_{i,1}+\varepsilon^2a_{i,2},\quad b_j=b_{j,0}+\varepsilon b_{j,1}+\varepsilon^2b_{j,2},
\end{split}
\end{equation}
where $i\in\{1,\ldots,7\}$ and $j\in\{1,2\}$, with $\varepsilon$ being a small parameter.

It is not hard to check that the origin is a zero-Hopf equilibrium of system \eqref{eqhyp-1} if we impose the following conditions
\begin{equation}\label{eqhyp-3}
\begin{split}
a_{1,0}=-1,\quad a_{3,0}=\beta^2+1,\quad a_{5,0}=a_{7,0}=0,
\end{split}
\end{equation}
where $\beta\neq0$. That is, under conditions \eqref{eqhyp-3}, the coefficient matrix of the linear part of system \eqref{eqhyp-1} at the origin has eigenvalues 0, 0 and $\pm\beta i$.

Now we investigate the periodic orbits bifurcating from the origin for system \eqref{eqhyp-1} under conditions \eqref{eqhyp-3}. By doing the linear change of variables $(x,y,z,w)\rightarrow(x_1,x_2,x_3,x_4)$:
\begin{equation}\label{eqhyp-4}
\begin{split}
x&=-\frac{1}{\beta^2}\Big((-a_{2,0}+a_{4,0})x_1+\beta a_{2,0} x_2+(a_{2,0}-a_{4,0})x_4\Big),\\
y&=-\frac{1}{\beta^2}\Big((-a_{2,0}(\beta^2+1)+a_{4,0})x_1+\beta a_{4,0} x_2+(a_{2,0}(\beta^2+1)-a_{4,0})x_4\Big),\\
z&=x_3-x_4,\\
w&=x_4,
\end{split}
\end{equation}
we write the linear part of system \eqref{eqhyp-1} at the origin when $\varepsilon=0$ into its real Jordan normal form. Then the differential system \eqref{eqhyp-1} becomes
\begin{equation}\label{eqhyp-5}
\begin{split}
\dot{x}_1&=-\beta x_2+g_1(\varepsilon,x_1,x_2,x_3,x_4),\\
\dot{x}_2&=\beta x_1+g_2(\varepsilon,x_1,x_2,x_3,x_4),\\
\dot{x}_3&=g_3(\varepsilon,x_1,x_2,x_3,x_4),\\
\dot{x}_4&=g_4(\varepsilon,x_1,x_2,x_3,x_4),\\
\end{split}
\end{equation}
where the expressions of $g_i$ for $i=1,\ldots,4$ are given in Appendix \ref{app-A}. Using the algorithmic tests presented in Section \ref{sub-sect3-3}, we obtain the following result on the number of limit cycles of system \eqref{eqhyp-1}.

\begin{theorem}\label{main-th-4}
The following statements hold for $\varepsilon>0$ sufficiently small.
\begin{itemize}
\item [\emph{(a)}] System \eqref{eqhyp-1} has, up to the first-order averaging, at most 1 limit cycle bifurcating from the origin, and this number can be reached if one of the following 16 conditions holds:
\begin{equation}\label{eqhyp-6}
\begin{split}
\mathcal{T}_1&=[a_{6,0},\,b_{2,0},\,a_{5,1},\,a_{1,1}<0;\,\,(\beta^2+1)a_{2,0}-a_{4,0}>0],\\
&\,\qquad\qquad\qquad\qquad\qquad\vdots\\
\mathcal{T}_9&=[a_{6,0},\,a_{5,1},\,a_{1,1},\,(\beta^2+1)a_{2,0}-a_{4,0}<0;\,\,b_{2,0}>0],\\
&\,\qquad\qquad\qquad\qquad\qquad\vdots\\
\mathcal{T}_{16}&=[a_{6,0},\,b_{2,0},\,a_{5,1},\,a_{1,1},\,(\beta^2+1)a_{2,0}-a_{4,0}>0].
\end{split}
\end{equation}

\item [\emph{(b)}] System \eqref{eqap-7} has, up to the second-order averaging, at most 2 limit cycles bifurcating from the origin, and this number can be reached if one of the following 32 conditions holds:
\begin{equation}\label{eqhyp-7}
\begin{split}
\bar{\mathcal{T}}_1&=[b_{1,0},\,a_{2,0},\,b_{2,1},\,a_{5,2},\,a_{1,2},\,\bar{R}_7<0;\,\, \bar{R}_6\geq0],\\
&\qquad\qquad\qquad\qquad\qquad\vdots\\
\bar{\mathcal{T}}_{17}&=[a_{2,0},\,b_{2,1},\,a_{5,2},\,a_{1,2}<0;\,\,\bar{R}_6\leq0;\,\,b_{1,0},\,\bar{R}_7>0],\\
&\qquad\qquad\qquad\qquad\qquad\vdots\\
\bar{\mathcal{T}}_{32}&=[\bar{R}_6\geq0;\,\,b_{1,0},\,a_{2,0},\,b_{2,1},\,a_{5,2},\,a_{1,2},\,\bar{R}_7>0],
\end{split}
\end{equation}
where
\begin{equation}\label{eqhyp-8}
\begin{split}
\bar{R}_6&=(\beta^2+1)a_{2,1}+a_{2,0}a_{3,1},\\
\bar{R}_7&=a_{{2,1}}^{2}a_{{6,0}}\beta^{4}+\big(2a_{{2,0}}a_{{2,1}}a_{{3
,1}}a_{{6,0}}-4a_{{1,2}}a_{{2,0}}b_{{1,0}}+2a_{{2,1}}^{2}a_{{6,0}}\big)\beta^{2}
+a_{{2,0}}^{2}a_{{3,1}}^{2}a_{{6,0}}\\
&\quad+2a_{{2,0}}a_{{2,1}}a_{{3,1}}a_{{6,0}}
+a_{{2,1}}^{2}a_{{6,0}}.\nonumber
\end{split}
\end{equation}

\end{itemize}
\end{theorem}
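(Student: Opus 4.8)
The plan is to treat Theorem~\ref{main-th-4} as a direct application of the algorithmic framework of Sections~\ref{sect2-pre} and~\ref{sub-sect3-3}, mirroring the proof of Theorem~\ref{main-th-3}. Since the nonlinear terms of~\eqref{eqhyp-1} are homogeneous of order $m=2$ and system~\eqref{eqhyp-5} already has the form~\eqref{equ-3} with its linear part in real Jordan normal form, the first step is to invoke Lemma~\ref{lem-main-1}: pass to cylindrical coordinates $x_1=r\cos\theta$, $x_2=r\sin\theta$, $x_3=x_3$, $x_4=x_4$, take $\theta$ as the new time, and rescale $(r,x_3,x_4)=(\varepsilon R,\varepsilon X_3,\varepsilon X_4)$ (the exponent $1/(m-1)=1$ here). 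This yields the standard averaging form~\eqref{equ2-1} with $\boldsymbol{F}_0=0$, so that Theorem~\ref{averaging-thm} applies with $\boldsymbol{\eta}=(R,X_3,X_4)$.

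For part~(a) I would compute the first-order averaged function $\boldsymbol{f}_1(\boldsymbol{\eta})=(f_{1,1},f_{1,3},f_{1,4})$ from the recurrence~\eqref{equ2-6} (only the $\boldsymbol{y}_1$ term is needed at first order), using the explicit $g_i$ recorded in Appendix~\ref{app-A}. By Theorem~\ref{main-th-0} each $f_{1,j}$ becomes, after clearing a power of $R$, a polynomial $\bar{f}_{1,j}\in\mathcal{R}[\boldsymbol{\eta}]$ of total degree at most $m=2$. The crucial observation is that for this particular system the three quadrics $\{\bar{f}_{1,1},\bar{f}_{1,3},\bar{f}_{1,4}\}$ have a sparse structure lying well below the generic first-order bound $(m-1)m^{n-2}=4$ of Theorem~\ref{main-th-1}; exploiting this structure one shows the system has at most one real zero with $R>0$. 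I would then form the Jacobian determinant $D_1(\boldsymbol{\eta})$, extract its numerator $\bar{D}_1(\boldsymbol{\eta})$, and apply Theorem~\ref{semi-averaging}, so that the number of bifurcating limit cycles equals the number of real solutions of $\{\bar{f}_{1,1}=\bar{f}_{1,3}=\bar{f}_{1,4}=0,\ R>0,\ \bar{D}_1\neq0,\ \beta\neq0\}$. A real solution classification with \textsf{DISCOVERER} (or \textsf{RegularChains[SemiAlgebraicSetTools]}) then produces precisely the sixteen sign conditions $\mathcal{T}_1,\ldots,\mathcal{T}_{16}$ of~\eqref{eqhyp-6}.

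Part~(b) proceeds analogously at second order. First I would specialize the parameters so that $\boldsymbol{f}_1(\boldsymbol{\eta})\equiv0$ identically (the hypothesis of case~(b) of Theorem~\ref{averaging-thm}), then compute the second-order integral function $\boldsymbol{y}_2$ via~\eqref{equ2-6}, which at $i=2$ requires the first Fr\'echet derivative $\partial\boldsymbol{F}_1$ paired with $\boldsymbol{y}_1$ through the Bell-polynomial term $B_{1,1}$, and set $\boldsymbol{f}_2=\boldsymbol{y}_2(2\pi,\cdot)/2!$. Clearing powers of $R$ gives $\bar{f}_{2,1},\bar{f}_{2,3},\bar{f}_{2,4}$; analyzing their structure shows at most two zeros with $R>0$. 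Computing $\bar{D}_2$ and feeding $\{\bar{f}_{2,1}=\bar{f}_{2,3}=\bar{f}_{2,4}=0,\ R>0,\ \bar{D}_2\neq0,\ \beta\neq0\}$ to the same classification tools would yield the thirty-two conditions $\bar{\mathcal{T}}_1,\ldots,\bar{\mathcal{T}}_{32}$ of~\eqref{eqhyp-7}, with the auxiliary polynomials $\bar{R}_6,\bar{R}_7$ in~\eqref{eqhyp-8} emerging as the resultant/discriminant-type boundaries separating the one-solution and two-solution regions.

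The main obstacle is the real solution classification rather than the averaging computation. Unlike the three-dimensional case of Theorem~\ref{main-th-3}, the averaged functions here carry many parameters ($a_{i,j}$, $b_{j,l}$, $\beta$) in three variables $(R,X_3,X_4)$, so the triangular/discriminant-variety decomposition underlying \textsf{DISCOVERER} yields border polynomials of high degree; verifying that exactly sixteen (respectively thirty-two) sign cells realize the prescribed solution count, and that $\bar{D}_k\neq0$ holds on each, is the delicate part. Establishing the sharp upper bounds (at most one, at most two) from the sparse structure of the $\bar{f}_{k,j}$, which sits far below the generic bounds of Corollary~\ref{main-co-1} and Theorem~\ref{main-th-1}, is the other point requiring care.
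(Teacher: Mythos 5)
Your proposal follows exactly the route the paper intends: it states that Theorem~\ref{main-th-4} "can be proved by using similar calculations and arguments to the proof of Theorem \ref{main-th-3}" and omits the details, and your plan — cylindrical coordinates and the rescaling of Lemma~\ref{lem-main-1} with $m=2$, computation of $\boldsymbol{f}_1$ and (after forcing $\boldsymbol{f}_1\equiv0$) $\boldsymbol{f}_2$ via \eqref{equ2-6}, the sparse-structure bound on positive-$R$ zeros, and the semi-algebraic classification of \eqref{equ3-0-4} with \textsf{DISCOVERER}/\textsf{RegularChains} to produce the sign conditions — is precisely that argument carried over to \eqref{eqhyp-5}. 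The only minor imprecision is that \eqref{eqhyp-5} carries extra linear cross-terms (e.g.\ $x_4$ in the $\dot{x}_1$ equation) beyond the template \eqref{equ-3}, so one works in the enlarged class the paper alludes to at the end of Section~\ref{sect1}; this does not affect the method.
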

The complete conditions for \eqref{eqhyp-6} and \eqref{eqhyp-7} are shown in Appendix \ref{app-B}. Theorem \ref{main-th-4} can be proved by using similar calculations and arguments to the proof of Theorem \ref{main-th-3}. The details of its proof are omitted here.

\section{Proof of Theorem \ref{main-th-0}}\label{sec-B}
This section is devoted to the proof of Theorem \ref{main-th-0}. We first present a lemma which plays a key role in determining the numbers $\mu_{i,j}$ with $i\in\{1,2,\ldots,k\}$ and $j\in\{1,3,\ldots,n\}$.

\begin{lemma}\label{lem-B-1}
Let $k\geq1$ and $G(x_1,\ldots,x_n)=\frac{G_n(x_1,\ldots,x_n)}{x_1^k}$ be a rational function in $x_1,\ldots,x_n$, where $G_n(x_1,\ldots,x_n)$ is a polynomial in $\mathbb{R}[x_1,\ldots,x_n]$ whose total degree, denoted $\emph{deg}(G_n)$, is equal to $d$. Then
\begin{equation}\label{equB-1}
\begin{split}
\sum_{i_1,\ldots,i_L=1}^n\frac{\partial^LG(x_1,\ldots,x_n)}{\partial x_{i_1}\cdots\partial x_{i_L}}=\frac{\bar{G}_n(x_1,\ldots,x_n)}{x_1^{k+L}},
\end{split}
\end{equation}
where $\bar{G}_n(x_1,\ldots,x_n)$ is a polynomial in $\mathbb{R}[x_1,\ldots,x_n]$ of total degree no more than $d$, and $\partial^L$ denotes the $L$th Fr\'echet derivative of the function $G(x_1,\ldots,x_n)$.
\end{lemma}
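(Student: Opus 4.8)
The plan is to prove the identity by induction on the differentiation order $L$, after rewriting the multiply-indexed sum as a single differential operator raised to a power. The key preliminary observation is that, since $G$ is smooth off the locus $x_1=0$ and mixed partial derivatives there commute, the summation over all $L$-tuples factors as
\[
\sum_{i_1,\ldots,i_L=1}^n\frac{\partial^L}{\partial x_{i_1}\cdots\partial x_{i_L}}=\Big(\sum_{i=1}^n\frac{\partial}{\partial x_i}\Big)^{L}=:D^L,
\]
where $D=\sum_{i=1}^n\partial/\partial x_i$ is the total-derivative operator. This reduces the assertion to showing that $D^L\big(G_n/x_1^k\big)=\bar{G}_n/x_1^{k+L}$ for some polynomial $\bar{G}_n$ with $\deg(\bar{G}_n)\leq d$.

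For the base case $L=1$ I would carry out a direct quotient-rule computation, separating the index $i=1$ (which differentiates the denominator) from the indices $i\geq2$ (which do not). Clearing denominators yields
\[
D\Big(\frac{G_n}{x_1^k}\Big)=\frac{x_1\,DG_n-k\,G_n}{x_1^{k+1}}.
\]
The crucial degree bookkeeping happens here: $DG_n$ has total degree at most $d-1$, so $x_1\,DG_n$ has degree at most $d$, while $k\,G_n$ has degree exactly $d$; hence the new numerator has total degree at most $d$. Observe that the extra factor $x_1$ exactly absorbs the degree drop produced by differentiation, which is precisely what prevents the degree bound from growing.

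For the inductive step I would apply the very same one-step formula with $k$ replaced by $k+L$ and $G_n$ replaced by the numerator $\bar{G}_n^{(L)}$ obtained at step $L$. Assuming $D^L\big(G_n/x_1^k\big)=\bar{G}_n^{(L)}/x_1^{k+L}$ with $\deg(\bar{G}_n^{(L)})\leq d$, one gets
\[
D^{L+1}\Big(\frac{G_n}{x_1^k}\Big)=\frac{x_1\,D\bar{G}_n^{(L)}-(k+L)\,\bar{G}_n^{(L)}}{x_1^{k+L+1}},
\]
and the identical degree estimate as in the base case shows the numerator still has total degree at most $d$. This closes the induction and produces exactly the claimed form, with denominator $x_1^{k+L}$ and $\deg(\bar{G}_n)\leq d$.

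The argument is entirely elementary, so there is no genuine analytic obstacle; the only two points that deserve to be stated explicitly — and that make the whole thing work — are the operator factorization $\sum_{i_1,\ldots,i_L}\partial^L=D^L$, which collapses the combinatorial sum into a single operator, and the observation that multiplication by $x_1$ compensates precisely the degree loss under $D$. The latter is what guarantees the total degree of the numerator stays bounded by $d$ at every order, rather than increasing with $L$ as one might naively fear.
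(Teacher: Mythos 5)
Your proof is correct, and it organizes the argument differently from the paper. The paper works monomial by monomial: it takes an arbitrary monomial $x_1^{j_1}\cdots x_n^{j_n}$ of $G_n$ and tracks how a single partial derivative acts on $x_1^{j_1}\cdots x_n^{j_n}/x_1^k$ --- differentiation in $x_1$ preserves the numerator's total degree while raising the denominator's power by one, whereas differentiation in $x_i$ with $i\neq1$ leaves the denominator alone and drops the numerator's degree by one --- and then concludes that after $L$ derivatives, once everything is placed over the common denominator $x_1^{k+L}$, the numerator has degree at most $d$. You instead collapse the $L$-fold sum into the operator $D^L$ with $D=\sum_i\partial/\partial x_i$ (legitimate off $x_1=0$ by symmetry of mixed partials) and run an induction on $L$ via the explicit one-step identity $D(G_n/x_1^k)=(x_1\,DG_n-kG_n)/x_1^{k+1}$. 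The two proofs rest on the same degree bookkeeping --- multiplication by $x_1$ exactly compensates the degree loss under differentiation --- but your version has the advantage of producing a closed recursion for the numerator and of making the induction explicit, where the paper's final step (``the desired result follows directly'') leaves the recombination over the common denominator to the reader. One small caveat: the lemma is invoked in the paper inside the $L$-multilinear map $\partial^L\boldsymbol{F}\bigodot_j\boldsymbol{y}_j$, where each summand carries its own coefficient $y_{1i_1}\cdots y_{Li_L}$; your operator identity applies to the unweighted sum exactly as the lemma is stated, but the paper's monomial-level bound on each individual partial derivative is what actually gets reused in the weighted setting of \eqref{equB-12}. This does not affect the correctness of your proof of the stated lemma.
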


\begin{proof}
Let $x_1^{j_1}x_2^{j_2}\cdots x_n^{j_n}$ be an arbitrary monomial of $G_n(x_1,\ldots,x_n)$ with all of its exponents $j_1,\ldots,j_n$ being nonnegative integers satisfying $j_1+\cdots+j_n\leq d$. Note that the operation
\begin{equation}\label{equB-2}
\begin{split}
\frac{\partial^L\left(x_1^{j_1}x_2^{j_2}\cdots x_n^{j_n}/x_1^k\right)}{\partial x_{i_1}\cdots\partial x_{i_L}}
\end{split}
\end{equation}
takes partial derivative with respect to $x_1$ once, the total degree of the resulting numerator remains the same, and the degree of the resulting denominator with respect to $x_1$ increases by one. On the other hand, the operation \eqref{equB-2} takes partial derivative with respect to $x_i$ ($i\neq1$) once, the resulting denominator remains the same, and the total degree of the resulting numerator decreases by one (or becomes 0). The desired result follows directly from the above fact.
\end{proof}

\begin{proof}[{\bf Proof of Theorem \ref{main-th-0}.}] In order to prove this theorem we need the following claim.
\smallskip

\noindent{{\bf Claim 1.}} The following relation holds:
\begin{equation}\label{equB-3}
\begin{split}
F_{k,j}=\frac{\bar{F}_{k,j}}{R^{k-1}},\quad j=1,3,\ldots,n,
\end{split}
\end{equation}
where each $\bar{F}_{k,j}$ is a polynomial in $R,X_3,\ldots,X_n$ of total degree at most $km$.

We only prove the claim for the case $j=1$ since similar arguments can be used in other cases. Recall that, given any real value $|\zeta|<1$, the following expansion holds
\begin{equation}\label{equB-4}
\begin{split}
\frac{1}{1+\zeta}=\sum_{h\geq0}(-1)^h\zeta^h.\nonumber
\end{split}
\end{equation}
Hence, the parametric formula \eqref{equ3-0-1} for $j=1$ can be written as
\begin{equation}\label{equB-5}
\begin{split}
\frac{dR}{d\theta}=\frac{1}{b}\left(\sum_{i=1}^kS_{i,1}\varepsilon^i\right)
\times\left[1+\sum_{h\geq1}\left(-\frac{1}{bR}\right)^h\left(S_{1,2}\varepsilon
+S_{2,2}\varepsilon^2+\cdots+S_{k,2}\varepsilon^k\right)^h\right],
\end{split}
\end{equation}
where the dependence on $(\theta,R,X_3,\ldots,X_n)$ is avoided to simplify the notation. Given $h^*\geq h$, we join together the coefficients of $\varepsilon^{h^*}$ in $\big(\sum_{i=1}^kS_{i,2}\varepsilon^i\big)^h$, which corresponds to summing up to $h^*$ with $h$ indices $i_1,i_2,\ldots,i_h\geq1$; that is,
\begin{equation}\label{equB-6}
\begin{split}
\Big(\sum_{i=1}^kS_{i,2}\varepsilon^i\Big)^h=\sum_{h^*\geq h}\varepsilon^{h^*}\left(\sum_{i_1+i_2+\cdots+i_h=h^*}S_{i_1,2}S_{i_2,2}\cdots S_{i_h,2}\right).
\end{split}
\end{equation}
Using \eqref{equB-6}, we have
\begin{equation}\label{equB-7}
\begin{split}
\sum_{h\geq1}\left(-\frac{1}{bR}\right)^h\Big(\sum_{i=1}^kS_{i,2}\varepsilon^i\Big)^h
&=\sum_{h\geq1}\sum_{h^*\geq h}\varepsilon^{h^*}\left(-\frac{1}{bR}\right)^h\sum_{i_1+i_2+\cdots+i_h=h^*}S_{i_1,2}S_{i_2,2}\cdots S_{i_h,2}\\
&=\sum_{h^*\geq 1}\sum_{h=1}^{h^*}\varepsilon^{h^*}\left(-\frac{1}{bR}\right)^h\sum_{i_1+i_2+\cdots+i_h=h^*}S_{i_1,2}S_{i_2,2}\cdots S_{i_h,2},\nonumber
\end{split}
\end{equation}
where the order of the summation in the indices has been changed. In order to simplify the notation further, we define the following auxiliary function
\begin{equation}\label{equB-8}
\begin{split}
\Delta_{h^*}(\theta,R,X_3,\ldots,X_n):=\sum_{h=1}^{h^*}\left(-\frac{1}{bR}\right)^h\sum_{i_1+i_2+\cdots+i_h=h^*}S_{i_1,2}S_{i_2,2}\cdots S_{i_h,2}.
\end{split}
\end{equation}
Substituting \eqref{equB-8} in the differential equation \eqref{equB-5}, we have
\begin{equation}\label{equB-9}
\begin{split}
\frac{dR}{d\theta}&=\frac{1}{b}\left(\sum_{i=1}^kS_{i,1}\varepsilon^i\right)
\times\left(1+\sum_{h^*\geq1}\varepsilon^{h^*}\Delta_{h^*}\right)\\
&=\frac{1}{b}\left[\sum_{i=1}^kS_{i,1}\varepsilon^i
+\sum_{k\geq2}\left(\sum_{h^*=1}^{k-1}S_{k-h^*,1}\Delta_{h^*}\right)\varepsilon^k\right]\\
&=\frac{1}{b}\left[S_{1,1}\varepsilon+\sum_{k\geq2}\left(S_{k,1}
+\sum_{h^*=1}^{k-1}S_{k-h^*,1}\Delta_{h^*}\right)\varepsilon^k\right].
\end{split}
\end{equation}
Hence, we have
\begin{equation}\label{equB-10}
\begin{split}
F_{1,1}:=\frac{S_{1,1}}{b},\quad F_{k,1}:=\frac{1}{b}\left(S_{k,1}
+\sum_{h^*=1}^{k-1}S_{k-h^*,1}\Delta_{h^*}\right),\quad k\geq2,
\end{split}
\end{equation}
where $\Delta_{h^*}$ is the function defined in \eqref{equB-8}.

Since $S_{i,1}$ and $S_{i,2}$ are polynomials of total degree at
most $m$ in $R,X_3,\ldots,X_n$ for $i=1,2,\ldots,k$, the function $\bar{\Delta}_{h^*}:=\Delta_{h^*}\cdot R^{h^*}$ is a polynomial of total degree at most $h^*m$ in $R,X_3,\ldots,X_n$. Using this fact, we complete the proof of the claim from equation \eqref{equB-10}.

Let $\boldsymbol{f}_i(\boldsymbol{\eta})=(f_{i,1}(\boldsymbol{\eta}),f_{i,3}(\boldsymbol{\eta}),\ldots,f_{i,n}(\boldsymbol{\eta}))$ be the $i$th-order averaged function associated to system \eqref{equ3-0-1}, with $i\in\{1,2,\ldots,k\}$. In the following we show that for each $j\in\{1,3,\ldots,n\}$, there exists a smallest nonnegative integer $\mu_{i,j}\leq i-1$ such that $R^{\mu_{i,j}}f_{i,j}(\boldsymbol{\eta})\in\mathcal{R}[\boldsymbol{\eta}]$. Moreover, we prove that $R^{\mu_{i,j}}f_{i,j}(\boldsymbol{\eta})$ is a polynomial in $\mathcal{R}[\boldsymbol{\eta}]$ of total degree no more than $im$.

Case $i=1$. Since $F_{1,j}$ is a polynomial of total degree at most $m$ in $R,X_3,\ldots,X_n$, it follows from \eqref{equ2-4} that $y_{1,j}(\theta,\boldsymbol{\eta})$ is a polynomial of total degree at most $m$ in $R,X_3,\ldots,X_n$.

By induction hypothesis, $y_{i,j}$ is a rational function of the form
\begin{equation}\label{equB-11}
\begin{split}
y_{i-1,j}(\theta,\boldsymbol{\eta})=\frac{\bar{y}_{i-1,j}(\theta,\boldsymbol{\eta})}{R^{i-2}},\quad i=2,\ldots,k,
\end{split}
\end{equation}
where $\bar{y}_{i-1,j}(\theta,\boldsymbol{\eta})$ is a polynomial of total degree at most $(i-1)m$ in $R,X_3,\ldots,X_n$.

To simplify notations, let $\boldsymbol{\eta}=(R,X_3,\ldots,X_n):=(\eta_1,\eta_2,\ldots,\eta_{n-1})$, $\boldsymbol{F}_{i}=(F_{i,1},F_{i,3},\ldots,F_{i,n}):=(F_{i,1},F_{i,2},\ldots,F_{i,n-1})$ and $\boldsymbol{y}_{i}=(y_{i,1},y_{i,3},\ldots,y_{i,n}):=(y_{i,1},y_{i,2},\ldots,y_{i,n-1})$. Now refer to the expression of $\boldsymbol{y}_{i}$ given in \eqref{equ2-4}, there only appear the previous functions $\boldsymbol{y}_{j_1}$, for $1\leq j_1\leq i-1$. Using equation \eqref{equ2-4} and the $L$-multilinear map defined in \eqref{equ2-2}, for any given integer $\ell$ with $1\leq\ell\leq i-1$ and each $j_2\in\{1,2,\ldots,n-1\}$, we have the following summation function
\begin{equation}\label{equB-12}
\begin{split}
y_{i,j_2}(\theta,\boldsymbol{\eta}) \quad & \dashrightarrow \quad \sum_{S_{\ell}}\partial^LF_{i-\ell,j_2}(\theta,\boldsymbol{\eta})
\bigodot_{j_1=1}^{\ell}\boldsymbol{y}_{j_1}(\theta,\boldsymbol{\eta})^{b_{j_1}}\\
&=\sum_{S_{\ell}}\sum_{i_1,\ldots,i_L=1}^{n-1}\frac{\partial^LF_{i-\ell,j_2}}
{\partial\eta_{i_1}\cdots\partial\eta_{i_L}}y_{1,i_1}\cdots y_{1,i_{b_1}}y_{2,i_{b_1+1}}\cdots y_{2,i_{b_2}}\cdots y_{\ell,i_{b_1+\cdots+b_{\ell-1}+1}}\cdots y_{\ell,i_{b_1+\cdots+b_{\ell}}}\\
&=\sum_{S_{\ell}}\sum_{i_1,\ldots,i_L=1}^{n-1}\frac{\partial^LF_{i-\ell,j_2}}
{\partial\eta_{i_1}\cdots\partial\eta_{i_L}}
\frac{\bar{y}_{1,i_1}\cdots\bar{y}_{1,i_{b_1}}\bar{y}_{2,i_{b_1+1}}\cdots\bar{y}_{2,i_{b_1+b_2}}\bar{y}_{\ell,i_{b_1
+\cdots+b_{\ell-1}+1}}\cdots\bar{y}_{\ell,i_{b_1+\cdots+b_{\ell}}}}{R^{b_2+2b_3+\cdots+(\ell-1)b_{\ell}}}.
\end{split}
\end{equation}
It follows from equation \eqref{equB-3} and Lemma \ref{lem-B-1} that the denominator of the expression \eqref{equB-12} is a polynomial in $R$ with degree at most
\[i-\ell-1+L+b_2+2b_3+\cdots+(\ell-1)b_{\ell}=i-1,\]
where the equalities $b_1+b_2+\cdots+b_{\ell}=L$ and $b_1+2b_2+\cdots+\ell b_{\ell}=\ell$ have been used. Moreover, from Lemma \ref{lem-B-1} and the induction hypothesis \eqref{equB-11}, we conclude that the numerator of the expression \eqref{equB-12} is a polynomial in $R,X_3,\ldots,X_n$ of total degree at most
\[(i-\ell)m+b_1m+2b_2m+\cdots+\ell b_{\ell}m=im.\]
This completes the proof of Theorem \ref{main-th-0}.
\end{proof}

\section{Proof of Theorem \ref{main-th-1}}\label{sect-main-1}
Applying Lemma \ref{lem-main-1} to system \eqref{equ-3} with $k=1$, we obtain the standard form
\begin{equation}\label{equ3-5}
\begin{split}
\frac{dR}{d\theta}&=\varepsilon F_1(\theta,R,X_3,\ldots,X_n)+\mathcal{O}(\varepsilon^2),\\
\frac{dX_s}{d\theta}&=\varepsilon F_s(\theta,R,X_3,\ldots,X_n)+\mathcal{O}(\varepsilon^2),\quad s=3,\ldots,n,
\end{split}
\end{equation}
where
\begin{equation}\label{equ3-6}
\begin{split}
F_1&=\frac{1}{b}\Big(a_1R+\sum_{i_1+\cdots+i_n=m}(p_{1,i_1,i_2,\ldots,i_n,0}\cos\theta
+p_{2,i_1,i_2,\ldots,i_n,0}\sin\theta)
(R\cos\theta)^{i_1}(R\sin\theta)^{i_2}X_3^{i_3}\cdots X_n^{i_n}\Big),\\
F_s&=\frac{1}{b}\Big(c_{s,1}X_s+\sum_{i_1+\cdots+i_n=m} p_{s,i_1,i_2,\ldots,i_n,0}(R\cos\theta)^{i_1}(R\sin\theta)^{i_2}X_3^{i_3}\cdots X_n^{i_n}\Big).\nonumber
\end{split}
\end{equation}
To apply the averaging method, we take in the standard form \eqref{equ2-1} $\boldsymbol{x}=\boldsymbol{\eta}=(R,X_3,\ldots,X_n)$, $t=\theta$, $T=2\pi$, and
\[\boldsymbol{F}(\theta,\boldsymbol{\eta})=(F_1(\theta,\boldsymbol{\eta}),F_3(\theta,\boldsymbol{\eta}),
\ldots,F_n(\theta,\boldsymbol{\eta})).\]
We need to compute the first-order averaged functions
\begin{equation}\label{equ3-7}
\begin{split}
f_{1,s}(\boldsymbol{\eta})=\int_0^{2\pi}F_s(\theta,\boldsymbol{\eta})d\theta,\quad s=1,3,\ldots,n;
\end{split}
\end{equation}
for this we define the integral equations
\begin{equation}\label{equ3-8}
\begin{split}
I_{i,j}=\int_0^{2\pi}\cos^i\theta\sin^j\theta d\theta,\quad i,j\in\mathbb{N}.
\end{split}
\end{equation}
In what follows, we divide our discussions into two cases.

{\bf Case 1:} $m\geq2$ is even. From equations \eqref{equ3-5}, \eqref{equ3-7} and \eqref{equ3-8}, we obtain after some analyses and calculations that
\begin{equation}\label{equ3-9}
\begin{split}
f_{1,1}&(\boldsymbol{\eta})=\frac{R}{ b}\Bigg[a_1I_{0,0}+\sum_{j=3}^n\Big(p_{1,1,m-2,\boldsymbol{e}_j,0}I_{2,m-2}
+p_{1,3,m-4,\boldsymbol{e}_j,0}I_{4,m-4}+\cdots+p_{1,m-1,0,\boldsymbol{e}_j,0}I_{m,0}\\
&\quad+p_{2,m-2,1,\boldsymbol{e}_j,0}I_{m-2,2}+p_{2,m-4,3,\boldsymbol{e}_j,0}I_{m-4,4}+\cdots
+p_{2,0,m-1,\boldsymbol{e}_j,0}I_{0,m}\Big)R^{m-2}X_j\\
&\quad+\sum_{3\leq j_1\leq j_2\leq j_3\leq n}\Big(p_{1,1,m-4,\boldsymbol{e}_{j_1j_2j_3},0}I_{2,m-4}
+p_{1,3,m-6,\boldsymbol{e}_{j_1j_2j_3},0}I_{4,m-6}+\cdots\\
&\quad+p_{1,m-3,0,\boldsymbol{e}_{j_1j_2j_3},0}I_{m-2,0}
+p_{2,m-4,1,\boldsymbol{e}_{j_1j_2j_3},0}I_{m-4,2}
+p_{2,m-6,3,\boldsymbol{e}_{j_1j_2j_3},0}I_{m-6,4}\\
&\quad+\cdots+p_{2,0,m-3,\boldsymbol{e}_{j_1j_2j_3},0}I_{0,m-2}\Big)R^{m-4}X_{j_1}X_{j_2}X_{j_3}+\cdots\\
&\quad+\sum_{3\leq j_1\leq j_2\leq\cdots\leq j_{m-1}\leq n}\Big(p_{1,1,0,\boldsymbol{e}_{j_1j_2\cdots j_{m-1}},0}I_{2,0}
+p_{2,0,1,\boldsymbol{e}_{j_1j_2\cdots j_{m-1}},0}I_{0,2}\Big)X_{j_1}X_{j_2}\cdots X_{j_{m-1}}\Bigg],\\
&=\frac{R}{ b}\Bigg[a_1I_{0,0}+\sum_{j=3}^n\sum_{i=0}^{\frac{m-2}{2}}\Big(p_{1,2i+1,m-2i-2,\boldsymbol{e}_j,0}I_{2i+2,m-2i-2}
+p_{2,2i,m-2i-1,\boldsymbol{e}_j,0}I_{2i,m-2i}\Big)R^{m-2}X_j\\
&\quad+\sum_{3\leq j_1\leq j_2\leq j_3\leq n}\sum_{i=0}^{\frac{m-4}{2}}\Big(p_{1,2i+1,m-2i-4,\boldsymbol{e}_{j_1j_2j_3},0}I_{2i+2,m-2i-4}
+p_{2,2i,m-2i-3,\boldsymbol{e}_{j_1j_2j_3},0}I_{2i,m-2i-2}\Big)\\
&\quad\times R^{m-4}X_{j_1}X_{j_2}X_{j_3}+\cdots+\sum_{3\leq j_1\leq j_2\leq\cdots\leq j_{m-1}\leq n}\Big(p_{1,1,0,\boldsymbol{e}_{j_1j_2\cdots j_{m-1}},0}I_{2,0}\\
&\quad+p_{2,0,1,\boldsymbol{e}_{j_1j_2\cdots j_{m-1}},0}I_{0,2}\Big)X_{j_1}X_{j_2}\cdots X_{j_{m-1}}\Bigg]\\
&=\frac{R}{b}\bar{f}_{1,1}(\boldsymbol{\eta})
\end{split}
\end{equation}
and
\begin{equation}\label{equ3-10}
\begin{split}
f_{1,s}(\boldsymbol{\eta})&=\frac{1}{b}\Bigg[c_{s,1}X_sI_{0,0}
+\Big(\sum_{i=0}^{\frac{m}{2}}p_{s,2i,m-2i,\boldsymbol{0}_{n-2},0}I_{2i,m-2i}\Big)R^m\\
&\quad+\sum_{3\leq j_1\leq j_2\leq n}\Big(\sum_{i=0}^{\frac{m-2}{2}}p_{s,2i,m-2i-2,\boldsymbol{e}_{j_1j_2},0}I_{2i,m-2i-2}\Big)R^{m-2}X_{j_1}X_{j_2}\\
&\quad+\cdots+\sum_{3\leq j_1\leq j_2\leq\cdots\leq j_m\leq n}p_{s,0,0,\boldsymbol{e}_{j_1j_2\cdots j_m},0}I_{0,0}X_{j_1}X_{j_2}\cdots X_{j_m}\Bigg]\\
&=\frac{1}{b}\bar{f}_{1,s}(\boldsymbol{\eta}),\quad s=3,\ldots,n,
\end{split}
\end{equation}
where $\boldsymbol{e}_{j}\in\mathbb{N}^{n-2}$ is the unit vector whose $j$th entry equal to 1, and $\boldsymbol{e}_{j_1j_2\cdots j_m}\in\mathbb{N}^{n-2}$ has the sum of the $j_1$th, the $j_2$th, $\ldots$, and the $j_m$th entries equal to $m$ and the others equal to 0 (these entries can coincide).

Now we apply the averaging theorem to obtain limit cycles of system \eqref{equ3-5}. Let $\mathcal{S}_1=\{\bar{f}_{1,1}(\boldsymbol{\eta}),\bar{f}_{1,3}(\boldsymbol{\eta}),\ldots,\bar{f}_{1,n}(\boldsymbol{\eta})\}$ be the algebraic system formed by the expressions in square brackets of \eqref{equ3-9} and \eqref{equ3-10}. By B\'ezout's theorem, the maximum number of common zeros that $\mathcal{S}_1$ can have is $(m-1)\cdot m^{n-2}$. In general this upper bound cannot be reached, see our Table \ref{Tab-1} for some concrete examples in dimension three.

{\bf Case 2:} $m\geq3$ is odd. Using arguments similar to those in the even case, we obtain
\begin{equation}\label{equ3-9-2}
\begin{split}
f_{1,1}&(\boldsymbol{\eta})=\frac{R}{ b}\Bigg[a_1I_{0,0}+\sum_{i=0}^{\frac{m-1}{2}}\Big(p_{1,2i+1,m-2i-1,\boldsymbol{0}_{n-2},0}I_{2i+2,m-2i-1}
+p_{2,2i,m-2i,\boldsymbol{0}_{n-2},0}I_{2i,m-2i+1}\Big)R^{m-1}\\
&\quad+\sum_{3\leq j_1\leq j_2\leq n}\sum_{i=0}^{\frac{m-3}{2}}\Big(p_{1,2i+1,m-2i-3,\boldsymbol{e}_{j_1j_2},0}I_{2i+2,m-2i-3}
+p_{2,2i,m-2i-2,\boldsymbol{e}_{j_1j_2},0}I_{2i,m-2i-1}\Big)\\
&\quad\times R^{m-3}X_{j_1}X_{j_2}+\cdots+\sum_{3\leq j_1\leq j_2\leq\cdots\leq j_{m-1}\leq n}\Big(p_{1,1,0,\boldsymbol{e}_{j_1j_2\cdots j_{m-1}},0}I_{2,0}\\
&\quad+p_{2,0,1,\boldsymbol{e}_{j_1j_2\cdots j_{m-1}},0}I_{0,2}\Big)X_{j_1}X_{j_2}\cdots X_{j_{m-1}}\Bigg]\\
&=\frac{R}{b}\bar{f}_{1,1}(\boldsymbol{\eta})
\end{split}
\end{equation}
and
\begin{equation}\label{equ3-10-2}
\begin{split}
f_{1,s}(\boldsymbol{\eta})&=\frac{1}{b}\Bigg[c_{s,1}X_sI_{0,0}
+\sum_{j=3}^n\Big(\sum_{i=0}^{\frac{m-1}{2}}p_{s,2i,m-2i-1,\boldsymbol{e}_{j},0}I_{2i,m-2i-1}\Big)R^{m-1}X_j\\
&\quad+\sum_{3\leq j_1\leq j_2\leq j_3\leq n}\Big(\sum_{i=0}^{\frac{m-3}{2}}p_{s,2i,m-2i-3,\boldsymbol{e}_{j_1j_2j_3},0}I_{2i,m-2i-3}\Big)R^{m-3}X_{j_1}X_{j_2}X_{j_3}\\
&\quad+\cdots+\sum_{3\leq j_1\leq j_2\leq\cdots\leq j_m\leq n}p_{s,0,0,\boldsymbol{e}_{j_1j_2\cdots j_m},0}I_{0,0}X_{j_1}X_{j_2}\cdots X_{j_m}\Bigg]\\
&=\frac{1}{b}\bar{f}_{1,s}(\boldsymbol{\eta}),\quad s=3,\ldots,n.
\end{split}
\end{equation}

Similarly, we let $\mathcal{S}_2=\{\bar{f}_{1,1}(\boldsymbol{\eta}),\bar{f}_{1,3}(\boldsymbol{\eta}),\ldots,\bar{f}_{1,n}(\boldsymbol{\eta})\}$ be the algebraic system formed by the expressions in square brackets of \eqref{equ3-9-2} and \eqref{equ3-10-2}. By B\'ezout's theorem, the maximum number of common zeros that $\mathcal{S}_2$ can have is $(m-1)\cdot m^{n-2}$ as well. In short, we conclude that, for $\varepsilon>0$ sufficiently small, any differential system of the form \eqref{equ-3} up to the first-order averaging can have at most $(m-1)\cdot m^{n-2}$ limit cycles in a neighborhood of the origin. This completes the proof of Theorem \ref{main-th-1}.




\section*{Acknowledgments}
The authors wish to thank the referees for their insightful and helpful comments on an early version of the paper. This work is partially supported by the National Natural Science Foundation of China (NSFC 12101032 and NSFC 12131004).

\appendix

\section{Expressions for \eqref{eqhyp-5}}\label{app-A}
The expressions of $g_i$ in \eqref{eqhyp-5} for $i=1,\ldots,4$ are as follows.
\begin{equation}\label{eqhyp-5-1}
\begin{split}
g_1&=\left(\varepsilon A_1+\varepsilon^2A_2\right)x_1
+\left(\varepsilon A_3+\varepsilon^2A_4\right)x_2+\left(\varepsilon A_5+\varepsilon^2A_6\right)x_4+(A_{7,0}+\varepsilon A_{7,1})\\
&\quad\times(x_1x_3+x_4^2-x_1x_4-x_3x_4)
+(A_{8,0}+\varepsilon A_{8,1})(x_2x_3-x_2x_4)+\mathcal{O}(\varepsilon^2),\\
g_2&=\left(\varepsilon B_1+\varepsilon^2 B_2\right)x_1+\left(\varepsilon B_3+\varepsilon^2 B_4\right)x_2
+\left(\varepsilon B_5+\varepsilon^2 B_6\right)x_4+(B_{7,0}+\varepsilon B_{7,1})\\
&\quad\times(x_1x_3+x_4^2-x_1x_4-x_3x_4)
+(B_{8,0}+\varepsilon B_{8,1})(x_2x_3-x_2x_4)+\mathcal{O}(\varepsilon^2),\\
g_3&=-(\varepsilon a_{5,1}+\varepsilon^2a_{5,2})x_3
+\left(\varepsilon(a_{5,1}-a_{7,1})+\varepsilon^2(a_{5,2}-a_{7,2})\right)x_4
+(C_{1,0}+\varepsilon C_{1,1})x_1^2\\
&\quad+(C_{2,0}+\varepsilon C_{2,1})x_1x_2+(C_{3,0}+\varepsilon C_{3,1})x_1x_3+(C_{4,0}+\varepsilon C_{4,1})x_2^2\\
&\quad+(C_{5,0}+\varepsilon C_{5,1})x_2x_3+(C_{6,0}+\varepsilon C_{6,1})x_2x_4+(C_{7,0}+\varepsilon C_{7,1})x_3x_4\\
&\quad+(C_{8,0}+\varepsilon C_{8,1})x_1x_4+(C_{9,0}+\varepsilon C_{9,1})x_4^2+\mathcal{O}(\varepsilon^2),\\
g_4&=-(\varepsilon a_{7,1}+\varepsilon^2 a_{7,2})x_4+(D_{1,0}+\varepsilon D_{1,1})(x_1x_3+x_4^2-x_1x_4-x_3x_4)\\
&\quad+(D_{2,0}+\varepsilon D_{2,1})(x_2x_3-x_2x_4)+\mathcal{O}(\varepsilon^2),\nonumber
\end{split}
\end{equation}
where
\begin{align}\label{eqhyp-5-2}
A_1&=-\frac{a_{{2,0}}}{\xi}\big(\beta^{2}a_{{4,0}}a_{{1,1}}-a_{{2,0}}a_{{3
,1}}+a_{{3,1}}a_{{4,0}}\big),\quad
A_2=-\frac{a_{{2,0}}}{\xi}\big( \beta^{2}a_{{4,0}}a_{{1,2}}-a_{{2,0}}a_{{3,2}}+a_{{3,2}}a_{{4,0}}\big),\nonumber\\
A_3&=-\frac{\beta}{\xi}\big(a_{{1,1}}a_{{2,0}}a_{{4,0}}-a_{{1,1}}a_{{4,0}}^{2}
+a_{{2,0}}^{2}a_{{3,1}}\big),\quad
A_4=-\frac{\beta}{\xi}\big(a_{{1,2}}a_{{2,0}}a_{{4,0}}-a_{{1,2}}a_{{4,0}}^{2}
+a_{{2,0}}^{2}a_{{3,2}}\big),\nonumber\\
A_5&=\frac{1}{\xi}\Big(\big( a_{{1,1}}a_{{2,0}}a_{{4,0}}-a_{{2,0}}^{2}a_{{7,1}}-a_{{2,0}}
a_{{4,1}}+a_{{2,1}}a_{{4,0}}\big)\beta^{2}-a_{{2,0}}^{2}a_{{3,1}}-a_{{2,0}}^{2}a_{{7,1}}\nonumber\\
&\quad+a_{{2,0}}a_{{3,1}}a_{{4,0}}+2a_{{2,0}}a_{{4,0}}a_{{7,1}}
-a_{{4,0}}^{2}a_{{7,1}}\Big),\nonumber\\
A_6&=\frac{1}{\xi}
\Big(\big(a_{{1,2}}a_{{2,0}}a_{{4,0}}-a_{{2,0}}^{2}a_{{7,2}}-a_{{2,0}}
a_{{4,2}}+a_{{2,2}}a_{{4,0}}\big)\beta^{2}-a_{{2,0}}^{2}a_{{3,2}}-a_{{2,0}}^{2}a_{{7,2}}\nonumber\\
&\quad+a_{{2,0}}a_{{3,2}}a_{{4,0}}+2a_{{2,0}}a
_{{4,0}}a_{{7,2}}-a_{{4,0}}^{2}a_{{7,2}}\Big),\nonumber\\
A_{7,0}&=\frac{1}
{\beta^{2}\xi}
\Big(a_{{2,0}}^{3}a_{{6,0}}\beta^{4}+\big(2a_{{2,0}}^{3}a_{{6,0}
}-3a_{{2,0}}^{2}a_{{4,0}}a_{{6,0}}+a_{{2,0}}a_{{4,0}}^{2}a_{{6,0
}}-a_{{2,0}}^{2}b_{{1,0}}+a_{{2,0}}a_{{4,0}}b_{{1,0}}\big)\beta^{2}\nonumber\\
&\quad+a_{{2,0}}^{3}a_{{6,0}}-3a_{{2,0}}^{2}a_{{4,0}}a_{{6,0}}+3
a_{{2,0}}a_{{4,0}}^{2}a_{{6,0}}-a_{{4,0}}^{3}a_{{6,0}}\Big),\nonumber\\
A_{8,0}&=-\frac{1}{\beta\xi}
\Big(\big(a_{{2,0}}^{2}a_{{4,0}}a_{{6,0}}-a_{{2,0}}^{2}b_{{1,0}}
\big)\beta^{2}+a_{{2,0}}^{2}a_{{4,0}}a_{{6,0}}-2a_{{2,0}}a_
{{4,0}}^{2}a_{{6,0}}+a_{{4,0}}^{3}a_{{6,0}}\Big),\nonumber\\
A_{7,1}&=\frac{1}{\beta^{2}\xi}
\Big(a_{{2,0}}^{3}a_{{6,1}}\beta^{4}+\big(2a_{{2,0}}^{3}a_{{6,1}
}-3a_{{2,0}}^{2}a_{{4,0}}a_{{6,1}}+a_{{2,0}}a_{{4,0}}^{2}a_{{6,1
}}-a_{{2,0}}^{2}b_{{1,1}}+a_{{2,0}}a_{{4,0}}b_{{1,1}}\big)\beta^{2}\nonumber\\
&\quad+a_{{2,0}}^{3}a_{{6,1}}-3a_{{2,0}}^{2}a_{{4,0}}a_{{6,1}}+3
a_{{2,0}}a_{{4,0}}^{2}a_{{6,1}}-a_{{4,0}}^{3}a_{{6,1}}\Big),\nonumber\\
A_{8,1}&=-\frac{1}{\beta\xi}\Big(\big( a_{{2,0}}^{2}a_{{4,0}}a_{{6,1}}-a_{{2,0}}^{2}b_{{1,1}}
\big)\beta^{2}+a_{{2,0}}^{2}a_{{4,0}}a_{{6,1}}-2a_{{2,0}}a_
{{4,0}}^{2}a_{{6,1}}+a_{{4,0}}^{3}a_{{6,1}}\Big),\nonumber\\
B_1&=-\frac{1}{\beta\xi}\Big(a_{{1,1}}a_{{2,0}}^{2}\beta^{4}+\big( a_{{1,1}}a_{{2,0}}^{2}-a
_{{1,1}}a_{{2,0}}a_{{4,0}}\big)\beta^{2}-a_{{2,0}}^{2}a_{{3,1}
}+2a_{{2,0}}a_{{3,1}}a_{{4,0}}-a_{{3,1}}a_{{4,0}}^{2}\Big),\nonumber\\
B_2&=-\frac{1}{\beta\xi}\Big(a_{{1,2}}a_{{2,0}}^{2}\beta^{4}+\big( a_{{1,2}}a_{{2,0}}^{2}-a_{{1,2}}a_{{2,0}}a_{{4,0}}\big)\beta^{2}
-a_{{2,0}}^{2}a_{{3,2}}+2a_{{2,0}}a_{{3,2}}a_{{4,0}}-a_{{3,2}}a_{{4,0}}^{2}\Big),\nonumber\\
B_3&=-\frac{a_{{2,0}}-a_{{4,0}}}{\xi}\big(\beta^{2}a_{{1,
1}}a_{{2,0}}+a_{{1,1}}a_{{2,0}}-a_{{1,1}}a_{{4,0}}+a_{{2,0}}a_{{3,1}}\big),\nonumber\\
B_4&=-\frac{a_{{2,0}}-a_{{4,0}}}{\xi}\big(\beta^{2}a_{{1,
2}}a_{{2,0}}+a_{{1,2}}a_{{2,0}}-a_{{1,2}}a_{{4,0}}+a_{{2,0}}a_{{3,2}}\big),\nonumber\\
B_5&=\frac{1}
{\beta\xi}
\Big(\big(a_{{1,1}}a_{{2,0}}^{2}+a_{{2,0}}a_{{2,1}}\big)\beta^{4}
+\big(a_{{1,1}}a_{{2,0}}^{2}-a_{{1,1}}a_{{2,0}}a_{{4,0}}+a_{{2,0
}}a_{{2,1}}-a_{{2,0}}a_{{4,1}}-a_{{2,1}}a_{{4,0}}\nonumber\\
&\quad+a_{{4,0}}a_{{4,1}}\big)\beta^{2}-a_{{2,0}}^{2}a_{{3,1}}+2a_{{2,0}}a_{{3,1}}a_{
{4,0}}-a_{{3,1}}a_{{4,0}}^{2}\Big),\nonumber\\
B_6&=\frac{1}
{\beta\xi}
\Big(\big(a_{{1,2}}a_{{2,0}}^{2}+a_{{2,0}}a_{{2,2}}\big)\beta^{4
}+\big(a_{{1,2}}a_{{2,0}}^{2}-a_{{1,2}}a_{{2,0}}a_{{4,0}}+a_{{2,0
}}a_{{2,2}}-a_{{2,0}}a_{{4,2}}-a_{{2,2}}a_{{4,0}}\nonumber\\
&\quad+a_{{4,0}}a_{{4,2}}\big)\beta^{2}-a_{{2,0}}^{2}a_{{3,2}}
+2a_{{2,0}}a_{{3,2}}a_{{4,0}}-a_{{3,2}}a_{{4,0}}^{2}\Big),\nonumber\\
B_{7,0}&=-\frac{b_{{1,0}}\left(a_{{2,0}}-a_{{4,0}}\right)^{2}}
{\beta\xi},\quad
B_{8,0}=\frac{a_{{2,0}}b_{{1,0}}\left(a_{{2,0}}-a_{{4,0}}\right)}
{\xi},\quad
B_{7,1}=\frac{b_{1,1}}{b_{{1,0}}}B_{7,0},\quad
B_{8,1}=\frac{b_{1,1}}{b_{{1,0}}}B_{8,0},\nonumber\\
C_{1,0}&=\frac{b_{{2,0}}\left(a_{{2,0}}-a_{{4,0}}\right)\lambda}{\beta^{4}},\quad
C_{2,0}=-\frac{b_{{2,0}}\left((\beta^{2}+1)a_{{2,0}}^{2}-a_{{4,0}}^{2}\right) }{\beta^{3}},\quad
C_{3,0}=\frac{a_{{6,0}}\lambda}{\beta^{2}},\nonumber\\
C_{4,0}&=\frac{a_{2,0}a_{4,0}b_{2,0}}{\beta^2},\quad
C_{5,0}=-\frac{a_{4,0}a_{6,0}}{\beta},\quad
C_{6,0}=\frac{1}{\beta^{3}}\Big(\big(a_{{2,0}}^{2}b_{{2,0}}+a_{{4,0}}a_{{6,0
}}\big)\beta^{2}+a_{{2,0}}^{2}b_{{2,0}}-a_{{4,0}}^{2}b_{{2,0}}\Big),\nonumber\\
C_{7,0}&=-C_{3,0},\quad
C_{8,0}=-\frac{\lambda
\left(\beta^{2}a_{{6,0}}+2a_{{2,0}}b_{{2,0}}-2a_{{4,0}}b_{{2,0
}}\right)}{\beta^{4}},\quad
C_{9,0}=\frac{\lambda
\left(\beta^{2}a_{{6,0}}+a_{{2,0}}b_{{2,0}}-a_{{4,0}}b_{{2,0}}\right) }{\beta^{4}},\nonumber\\
C_{1,1}&=\frac{b_{{2,1}}\left(a_{{2,0}}-a_{{4,0}}\right)
\lambda}{\beta^{4}},\quad
C_{2,1}=\frac{b_{2,1}}{b_{2,0}}C_{2,0},\quad
C_{3,1}=\frac{a_{{6,1}}\lambda}{\beta^{2}},\quad
C_{4,1}=\frac{a_{2,0}a_{4,0}b_{2,1}}{\beta^2},\nonumber\\
C_{5,1}&=-\frac{a_{4,0}a_{6,1}}{\beta},\quad
C_{6,1}=\frac{\big(a_{{2,0}}^{2}b_{{2,1}}+a_{{4,0}}a_{{6,1}}\big)\beta^
{2}+a_{{2,0}}^{2}b_{{2,1}}-a_{{4,0}}^{2}b_{{2,1}}}{\beta^{3}},\quad
C_{7,1}=-C_{3,1},\nonumber\\
C_{8,1}&=-\frac{\lambda
\left(\beta^{2}a_{{6,1}}+2a_{{2,0}}b_{{2,1}}-2a_{{4,0}}b_{{2,1}}\right)}
{\beta^{4}},\quad
C_{9,1}=\frac{\lambda
\left(\beta^{2}a_{{6,1}}+a_{{2,0}}b_{{2,1}}-a_{{4,0}}b_{{2,1}}
\right)}{\beta^{4}},\nonumber\\
D_{1,0}&=\frac{a_{{6,0}}\lambda}{\beta^{2}},\quad
D_{1,1}=\frac{a_{{6,1}}\lambda}{\beta^{2}},\quad
D_{2,0}=-\frac{a_{4,0}a_{6,0}}{\beta},\quad D_{2,1}=-\frac{a_{4,0}a_{6,1}}{\beta},\nonumber
\end{align}
with
\[\xi=\beta^{2}a_{{2,0}}^{2}+(a_{{2,0}}-a_{4,0})^{2},\quad \lambda=(\beta^2+1)a_{2,0}-a_{4,0}.\]

\section{Conditions for \eqref{eqhyp-6} and \eqref{eqhyp-7}}\label{app-B}

\begin{align}\label{eqhyp-6-1-0}
\mathcal{T}_1&=[a_{6,0},\,b_{2,0},\,a_{5,1},\,a_{1,1}<0;\,\,(\beta^2+1)a_{2,0}-a_{4,0}>0],\nonumber\\
\mathcal{T}_2&=[a_{6,0},\,b_{2,0},\,a_{5,1},\,(\beta^2+1)a_{2,0}-a_{4,0}<0;\,\,a_{1,1}>0],\nonumber\\
\mathcal{T}_3&=[a_{6,0},\,b_{2,0},\,a_{1,1},\,(\beta^2+1)a_{2,0}-a_{4,0}<0;\,\,a_{5,1}>0],\nonumber\\
\mathcal{T}_4&=[a_{6,0},\,b_{2,0}<0;\,\,a_{5,1},\,a_{1,1},\,(\beta^2+1)a_{2,0}-a_{4,0}>0],\nonumber\\
\mathcal{T}_5&=[b_{2,0},\,a_{5,1},\,a_{1,1},\,(\beta^2+1)a_{2,0}-a_{4,0}<0;\,\,a_{6,0}>0],\nonumber\\
\mathcal{T}_6&=[b_{2,0},\,a_{5,1}<0;\,\,a_{6,0},\,a_{1,1},\,(\beta^2+1)a_{2,0}-a_{4,0}>0],\nonumber\\
\mathcal{T}_7&=[b_{2,0},\,a_{1,1}<0;\,\,a_{6,0},\,a_{5,1},\,(\beta^2+1)a_{2,0}-a_{4,0}>0],\nonumber\\
\mathcal{T}_8&=[b_{2,0},\,(\beta^2+1)a_{2,0}-a_{4,0}<0;\,\,a_{6,0},\,a_{5,1},\,a_{1,1}>0],\nonumber\\
\mathcal{T}_9&=[a_{6,0},\,a_{5,1},\,a_{1,1},\,(\beta^2+1)a_{2,0}-a_{4,0}<0;\,\,b_{2,0}>0],\nonumber\\
\mathcal{T}_{10}&=[a_{6,0},\,a_{5,1}<0;\,\,b_{2,0},\,a_{1,1},\,(\beta^2+1)a_{2,0}-a_{4,0}>0],\nonumber\\
\mathcal{T}_{11}&=[a_{6,0},\,a_{1,1}<0;\,\,b_{2,0},\,a_{5,1},\,(\beta^2+1)a_{2,0}-a_{4,0}>0],\nonumber\\
\mathcal{T}_{12}&=[a_{6,0},\,(\beta^2+1)a_{2,0}-a_{4,0}<0;\,\,b_{2,0},\,a_{5,1},\,a_{1,1}>0],\nonumber\\
\mathcal{T}_{13}&=[a_{5,1},\,a_{1,1}<0;\,\,a_{6,0},\,b_{2,0},\,(\beta^2+1)a_{2,0}-a_{4,0}>0],\nonumber\\
\mathcal{T}_{14}&=[a_{5,1},\,(\beta^2+1)a_{2,0}-a_{4,0}<0;\,\,a_{6,0},\,b_{2,0},\,a_{1,1}>0],\nonumber\\
\mathcal{T}_{15}&=[a_{1,1},\,(\beta^2+1)a_{2,0}-a_{4,0}<0;\,\,a_{6,0},\,b_{2,0},\,a_{5,1}>0],\nonumber\\
\mathcal{T}_{16}&=[a_{6,0},\,b_{2,0},\,a_{5,1},\,a_{1,1},\,(\beta^2+1)a_{2,0}-a_{4,0}>0].\nonumber\\
\bar{\mathcal{T}}_1&=
[b_{1,0},\,a_{2,0},\,b_{2,1},\,a_{5,2},\,a_{1,2},\,\bar{R}_7<0;\,\, \bar{R}_6\geq0],\nonumber\\
\bar{\mathcal{T}}_2&=
[b_{1,0},\,a_{2,0},\,b_{2,1},\,a_{5,2}<0;\,\,
\bar{R}_6\geq0;\,\,a_{1,2},\,\bar{R}_7>0],\nonumber\\
\bar{\mathcal{T}}_3&=[b_{1,0},\,a_{2,0},\,b_{2,1},\,a_{1,2},\,\bar{R}_7<0;\,\,
\bar{R}_6\leq 0;\,\,a_{5,2}>0],\nonumber\\
\bar{\mathcal{T}}_4&=
[b_{1,0},\,a_{2,0},\,b_{2,1}<0;\,\,\bar{R}_6\leq 0;\,\,a_{5,2},\,a_{1,2},\,\bar{R}_7>0],\nonumber\\
\bar{\mathcal{T}}_5&=[b_{1,0},\,a_{2,0},\,a_{5,2},\,a_{1,2},\,\bar{R}_7<0;\,\,
\bar{R}_6\leq 0;\,\,b_{2,1}>0],\nonumber\\
\bar{\mathcal{T}}_6&=
[b_{1,0},\,a_{2,0},\,a_{5,2}<0;\,\,\bar{R}_6\leq 0;\,\,b_{2,1},\,a_{1,2},\,\bar{R}_7>0],\nonumber\\
\bar{\mathcal{T}}_7&=
[b_{1,0},\,a_{2,0},\,a_{1,2},\,\bar{R}_7<0;\,\, \bar{R}_6\geq0;\,\,b_{2,1},\,a_{5,2}>0],\nonumber\\
\bar{\mathcal{T}}_8&=
[b_{1,0},\,a_{2,0}<0;\,\, \bar{R}_6\geq0;\,\,b_{2,1},\,a_{5,2},\,a_{1,2},\,\bar{R}_7>0],\nonumber\\
\bar{\mathcal{T}}_9&=
[b_{1,0},\,b_{2,1},\,a_{5,2},\,a_{1,2}<0;\,\,\bar{R}_6\leq0;\,\,a_{2,0},\,\bar{R}_7>0],\nonumber\\
\bar{\mathcal{T}}_{10}&=
[b_{1,0},\,b_{2,1},\,a_{5,2},\,\bar{R}_7<0;\,\,\bar{R}_6\leq0;\,\,a_{2,0},\,a_{1,2}>0],\nonumber\\
\bar{\mathcal{T}}_{11}&
=[b_{1,0},\,b_{2,1},\,a_{1,2}<0;\,\,\bar{R}_6\geq0;\,\,a_{2,0},\,a_{5,2},\,\bar{R}_7>0],\nonumber\\
\bar{\mathcal{T}}_{12}&
=[b_{1,0},\,b_{2,1},\,\bar{R}_7<0;\,\,\bar{R}_6\geq0;\,\,a_{2,0},\,a_{5,2},\,a_{1,2}>0],\nonumber\\
\bar{\mathcal{T}}_{13}&
=[b_{1,0},\,a_{5,2},\,a_{1,2}<0;\,\,0\leq\bar{R}_6;\,\,a_{2,0},\,b_{2,1},\,\bar{R}_7>0],\nonumber\\
\bar{\mathcal{T}}_{14}&
=[b_{1,0},\,a_{5,2},\,\bar{R}_7<0;\,\,0\leq\bar{R}_6;\,\,a_{2,0},\,b_{2,1},\,a_{1,2}>0],\nonumber\\
\bar{\mathcal{T}}_{15}&
=[b_{1,0},\,a_{1,2}<0;\,\,\bar{R}_6\leq0;\,\,a_{2,0},\,b_{2,1},\,a_{5,2},\,\bar{R}_7>0],\nonumber\\
\bar{\mathcal{T}}_{16}&
=[b_{1,0},\,\bar{R}_7<0;\,\,\bar{R}_6\leq0;\,\,a_{2,0},\,b_{2,1},\,a_{5,2},\,a_{1,2}>0],\nonumber\\
\bar{\mathcal{T}}_{17}&
=[a_{2,0},\,b_{2,1},\,a_{5,2},\,a_{1,2}<0;\,\,\bar{R}_6\leq0;\,\,b_{1,0},\,\bar{R}_7>0],\nonumber\\
\bar{\mathcal{T}}_{18}&
=[a_{2,0},\,b_{2,1},\,a_{5,2},\,\bar{R}_7<0;\,\,\bar{R}_6\leq0;\,\,b_{1,0},\,a_{1,2}>0],\nonumber\\
\bar{\mathcal{T}}_{19}&
=[a_{2,0},\,b_{2,1},\,a_{1,2}<0;\,\,\bar{R}_6\geq0;\,\,b_{1,0},\,a_{5,2},\,\bar{R}_7>0],\nonumber\\
\bar{\mathcal{T}}_{20}&
=[a_{2,0},\,b_{2,1},\,\bar{R}_7<0;\,\,\bar{R}_6\geq0;\,\,b_{1,0},\,a_{5,2},\,a_{1,2}>0],\nonumber\\
\bar{\mathcal{T}}_{21}&
=[a_{2,0},\,a_{5,2},\,a_{1,2}<0;\,\,\bar{R}_6\geq0;\,\,b_{1,0},\,b_{2,1},\,\bar{R}_7>0],\nonumber\\
\bar{\mathcal{T}}_{22}&
=[a_{2,0},\,a_{5,2},\,\bar{R}_7<0;\,\,\bar{R}_6\geq0;\,\,b_{1,0},\,b_{2,1},\,a_{1,2}>0],\nonumber\\
\bar{\mathcal{T}}_{23}&
=[a_{2,0},\,a_{1,2}<0;\,\,\bar{R}_6\leq0;\,\,b_{1,0},\,b_{2,1},\,a_{5,2},\,\bar{R}_7>0],\nonumber\\
\bar{\mathcal{T}}_{24}&
=[a_{2,0},\,\bar{R}_7<0;\,\,\bar{R}_6\leq0;\,\,b_{1,0},\,b_{2,1},\,a_{5,2},\,a_{1,2}>0],\nonumber\\
\bar{\mathcal{T}}_{25}&
=[b_{2,1},\,a_{5,2},\,a_{1,2},\,\bar{R}_7<0;\,\,\bar{R}_6\geq0;\,\,b_{1,0},\,a_{2,0}>0],\nonumber\\
\bar{\mathcal{T}}_{26}&
=[b_{2,1},\,a_{5,2}<0;\,\,\bar{R}_6\geq0;\,\,b_{1,0},\,a_{2,0},\,a_{1,2},\,\bar{R}_7>0],\nonumber\\
\bar{\mathcal{T}}_{27}&
=[b_{2,1},\,a_{1,2},\,\bar{R}_7<0;\,\,\bar{R}_6\leq 0;\,\,b_{1,0},\,a_{2,0},\,a_{5,2}>0],\nonumber\\
\bar{\mathcal{T}}_{28}&
=[b_{2,1}<0;\,\,\bar{R}_6\leq 0;\,\,b_{1,0},\,a_{2,0},\,a_{5,2},\,a_{1,2},\,\bar{R}_7>0],\nonumber\\
\bar{\mathcal{T}}_{29}&
=[a_{5,2},\,a_{1,2},\,\bar{R}_7<0;\,\,\bar{R}_6\leq 0;\,\,b_{1,0},\,a_{2,0},\,b_{2,1}>0],\nonumber\\
\bar{\mathcal{T}}_{30}&
=[a_{5,2}<0;\,\,\bar{R}_6\leq 0;\,\,b_{1,0},\,a_{2,0},\,b_{2,1},\,a_{1,2},\,\bar{R}_7>0],\nonumber\\
\bar{\mathcal{T}}_{31}&
=[a_{1,2},\,\bar{R}_7<0;\,\, \bar{R}_6\geq0;\,\,b_{1,0},\,a_{2,0},\,b_{2,1},\,a_{5,2}>0],\nonumber\\
\bar{\mathcal{T}}_{32}&
=[\bar{R}_6\geq0;\,\,b_{1,0},\,a_{2,0},\,b_{2,1},\,a_{5,2},\,a_{1,2},\,\bar{R}_7>0].\nonumber
\end{align}

\end{document}